\def\struckint{\mathop{%
\def\mathpalette##1##2{\mathchoice{##1\displaystyle##2}%
  {##1\textstyle##2}{##1\scriptstyle##2}{##1\scriptscriptstyle##2}}%
\mathpalette
{\vbox\bgroup\baselineskip0pt\lineskiplimit-1000pt\lineskip-1000pt
\halign\bgroup\hfill$}
{##$\hfill\cr{\intop}\cr\diagup\cr\egroup\egroup}%
}\limits}
\newtheorem{proposition}{Proposition}[section]
\newtheorem{theorem}[proposition]{Theorem}
\newtheorem{lemma}[proposition]{Lemma}
\newtheorem{corollary}[proposition]{Corollary}
\newtheorem{assumption}[proposition]{Assumption}
\theoremstyle{definition}
\newtheorem{definition}[proposition]{Definition}
\newtheorem{note}[proposition]{Note}
\newenvironment{pf*}[1]{\medskip \noindent {\em #1.} }{\endproof \medskip}
\newcommand{\xclass}[1]{\langle #1 \rangle}
\newcommand{\zz}[1]{\mathbb #1}
\title[Random Walk on a Surface Group]{Random Walk on a Surface Group:
 Behavior of the Green's Function at the Spectral
Radius}
\author{Steven P. Lalley} \address{University of Chicago\\ Department
of Statistics \\ 5734
University Avenue \\
Chicago IL 60637}
\email{lalley@galton.uchicago.edu}
\date{\today}
\subjclass{Primary 31C20, secondary 31C25 60J50 60B99}
\keywords{hyperbolic group, surface group, random walk, Green's function, Gromov
boundary, Martin boundary, Ruelle operator theorem, Gibbs state}
\thanks{Supported by NSF grant DMS  - 0805755}
\begin{document}

\begin{abstract}
It is proved that the Green's function of the simple random walk on a
surface group of large genus decays exponentially in distance at the
(inverse) spectral radius. It is also shown that Ancona's inequalities
extend to the spectral radius $R$, and therefore that the Martin
boundary for $R-$potentials coincides with the natural geometric
boundary $S^{1}$, and that the Martin kernel is uniformly H\"{o}lder
continuous. Finally, it is shown that the uniform H\"{o}lder
continuity of the Martin kernel up to the spectral radius implies that
the Green's function obeys a power law with exponent $1/2$.
\end{abstract}

\maketitle

\section{Introduction}\label{sec:introduction}

\subsection{Green's function and Martin boundary}\label{ssec:grfmb}
A (right) \emph{random walk} on a countable group $\Gamma$ is a discrete-time
Markov chain whose transition probabilities are $\Gamma -$invariant;
equivalently, it is a stochastic process $\{X_{n} \}_{n\geq 0}$ of the
form
\begin{equation}\label{eq:rw}
	X_{n}=x\xi_{1}\xi_{2}\dotsb \xi_{n}
\end{equation}
where $\xi_{1},\xi_{2},\dotsc $ are independent, identically
distributed $\Gamma -$valued random variables. The distribution of
$\xi_{i}$ is the \emph{step distribution} of the random walk.  The
\emph{Green's function} is the kernel of the resolvent operator
$r^{-1}(r^{-1}I-\zz{P})^{-1}$, where $\zz{P}$ is the transition probability
operator of the random walk. Equivalently, it is the generating function
of the transition probabilities: for $x,y\in \Gamma$ and $0\leq r < 1$
it is defined by the absolutely convergent series
\begin{equation}\label{eq:green}
	G_{r} (x,y):=\sum_{n=0}^{\infty} P^{x}\{X_{n}=y \}r^{n} =G_{r} (1,x^{-1}y);
\end{equation}
here $P^{x}$ is the probability measure on path space governing the
random walk with initial point $x$.  If the random walk is irreducible
(that is, if the support of the step distribution generates $\Gamma$)
then the radius of convergence $R$ of the series \eqref{eq:green} is
the same for all pairs $x,y$, and $1/R$ is the \emph{spectral radius}
of the transition operator. By a fundamental theorem of Kesten
\cite{kesten}, if the group $\Gamma$ is finitely generated and
nonamenable then $R>1$. Moreover, in this case the Green's function is
finite at its radius of convergence: for all $x,y\in\Gamma$,
\begin{equation}\label{eq:finiteAtR}
	G_{R} (x,y)<\infty.
\end{equation}

The Green's function is of central importance in the potential theory
associated with the random walk: in particular, it determines the
Martin boundary for $r-$potential theory. A prominent theme in the
study of random walks on nonabelian groups has been the relationship
between the geometry of the group and the nature of the Martin
boundary. A landmark result here is a theorem of Ancona~\cite{ancona}
describing the Martin boundary for random walks with finitely
supported step distributions on \emph{hyperbolic} groups: Ancona
proves that for every $r\in (0,R)$ the Martin boundary for
$r-$potential theory coincides with the \emph{geometric} (Gromov)
boundary, in a sense made precise below.  (Series~\cite{series} had
earlier established this in the special case $r=1$ when the group is
co-cocompact Fuchsian. See also \cite{anderson-schoen} and
\cite{ancona:annals} for related results concerning Laplace-Beltrami
operators on Cartan manifolds.)

It is natural to ask whether Ancona's theorem extends to $r=R$, that
is, if the Martin boundary is stable (see \cite{picardello-woess} for
the terminology) through the entire range $(0,R]$. One of the main
results of this paper (Theorem~\ref{theorem:1}) provides an
affirmative answer in the special case of simple random walk on the
\emph{surface groups} $\Gamma_{g}$. Let $A=A_{g}$ be the standard
symmetric set of generators for $\Gamma_{g}$:
\begin{equation}\label{eq:surfaceGenerators}
	A_{g}=\{a_{i}^{\pm 1},b_{i}^{\pm 1} \}_{1\leq i\leq g};
\end{equation}
these generators satisfy the fundamental relation
\begin{equation}\label{eq:surfaceRelations}
	\prod_{i=1}^{g} a_{i}b_{i}a_{i}^{-1}b_{i}^{-1}=1.
\end{equation}
The \emph{Cayley graph} $G^{\Gamma}$ of $\Gamma$ relative to the
generating set $A_{g}$ is the graph whose vertices are the elements of
$\Gamma $, and whose edges are the (unordered) pairs $x,y\in \Gamma$
such that $y=xa$ for some $a\in A_{g}$.  By \emph{simple} random walk
on $\Gamma_{g}$ (or $G^{\Gamma }$) we mean the random walk on $\Gamma$
whose step distribution is the uniform probability distribution on the
set $A_{g}$ of standard generators.  The surface group $\Gamma_{g}$
acts as a co-compact discrete group of isometries of the hyperbolic
plane, and so its Cayley graph can be embedded quasi-isometrically in
the hyperbolic plane; this implies that the Gromov boundary is the
circle $S^{1}$ at infinity.  Our main result (Theorem~\ref{theorem:1}
below) will directly imply the following.

\begin{theorem}\label{theorem:martinBoundary}
For simple random walk on a surface group $\Gamma_{g}$ of
sufficiently large genus $g$, the Martin boundary for $R-$potentials
coincides with the geometric boundary $S^{1}=\partial \Gamma_{g}$.
\end{theorem}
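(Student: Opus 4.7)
The plan is to deduce Theorem~\ref{theorem:martinBoundary} from Theorem~\ref{theorem:1} (as yet unseen), which I anticipate asserts Ancona-type inequalities at the spectral radius: for some $C < \infty$, all $x, y \in \Gamma_g$, and every $z$ lying within bounded distance of a geodesic segment from $x$ to $y$,
\begin{equation*}
    C^{-1} G_R(x, z) G_R(z, y) \leq G_R(x, y) \leq C \, G_R(x, z) G_R(z, y).
\end{equation*}
Once such a uniform multiplicative estimate is in hand at $r = R$, the identification of the Martin boundary follows by the standard program initiated by Ancona for $r \in (0, R)$ and transposed verbatim to $r = R$.

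First I would establish convergence of the Martin kernel $K_R(x, y) = G_R(x, y)/G_R(1, y)$ along any sequence $\{y_n\}$ in $\Gamma_g$ converging to $\xi \in S^1$. Given $x$, pick a point $z$ far out on a geodesic ray from $1$ toward $\xi$; by $\delta$-hyperbolicity, geodesics from $1$ and from $x$ to any $y_n$ sufficiently near $\xi$ fellow-travel past $z$ once $z$ has been pushed out far enough. Applying the Ancona inequality at $z$ in numerator and denominator yields
\begin{equation*}
    K_R(x, y_n) \;=\; \frac{G_R(x, y_n)}{G_R(1, y_n)} \;\asymp\; \frac{G_R(x, z) G_R(z, y_n)}{G_R(1, z) G_R(z, y_n)} \;=\; \frac{G_R(x, z)}{G_R(1, z)},
\end{equation*}
with multiplicative error bounded by $C^{2}$ independently of $n$. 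Pushing $z$ further out along the ray, a telescoping argument (whereby consecutive choices of $z$ cast nearly identical shadows, so that the multiplicative errors cancel across successive comparisons) upgrades this to genuine convergence of $K_R(x, y_n)$ to a limit $K_R(x, \xi)$ depending only on $x$ and $\xi$.

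I would then verify injectivity of the map $\xi \mapsto K_R(\cdot, \xi)$: for $\xi \neq \eta$ in $S^{1}$, pick $x$ on a geodesic toward $\xi$ far from every geodesic toward $\eta$, and use the Ancona inequality to separate $K_R(x, \xi)$ from $K_R(x, \eta)$. Continuity of the map follows analogously, after which the usual compactness argument identifies the Martin compactification with $\Gamma_g \cup S^{1}$ and exhibits each $K_R(\cdot, \xi)$ as a minimal positive $R$-harmonic function. The whole argument is Ancona's original proof with $r$ replaced by $R$ throughout; the real difficulty is therefore entirely absorbed into Theorem~\ref{theorem:1}, the extension of the Ancona inequalities up to the spectral radius, which is where the large-genus hypothesis and the bulk of the technical work must enter.
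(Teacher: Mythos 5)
Your proposal is correct and follows essentially the same route as the paper: Theorem~\ref{theorem:martinBoundary} is deduced from the uniform Ancona inequalities at $r=R$ (Theorem~\ref{theorem:1}), after which the standard Ancona--Anderson--Schoen contraction argument (the paper cites the version in \cite{izumi}, noting it applies almost verbatim once the boundary Harnack inequalities hold uniformly up to $R$) yields convergence, minimality, and the topological identification of the Martin boundary with $S^{1}$. Your ``telescoping/shadow'' step is exactly the contraction mechanism used in that standard program, and you correctly locate all the genuine difficulty in the extension of the Ancona inequalities to the spectral radius.
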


This assertion means that (1) for every geodesic ray
$y_{0},y_{1},y_{2},\dotsc$ in the Cayley graph that converges to a
point$\zeta \in \partial \Gamma$  and for every $x\in \Gamma$,
\begin{equation}\label{eq:martinConvergence}
	\lim_{n \rightarrow \infty} \frac{G_{R} (x,y_{n})}{G_{R}
	(1,y_{n})}=K_{R}(x,\zeta )=K (x,\zeta)
\end{equation}
exists; (2) for each $\zeta \in \partial \Gamma$ the function
$K_{\zeta} (x):= K(x,\zeta )$  is minimal positive $R-$harmonic in $x$;  (3) for
distinct points $\zeta ,\zeta '\in \partial \Gamma$ the functions
$K_{\zeta}$ and $K_{\zeta '}$ are different; and (4) the topology of
pointwise convergence on $\{K_{\zeta} \}_{\zeta \in \partial \Gamma}$
coincides with the usual topology on $\partial \Gamma=S^{1}$.

Our  results also yield explicit rates for the convergence
\eqref{eq:martinConvergence}, and imply that the Martin kernel $K_{r}
(x,\zeta)$ is \emph{H\"{o}lder} continuous in $\zeta$ relative to the
usual Euclidean metric (or any visual metric --- see
\cite{benakli-kapovich} for the definition) on $S^{1}=\partial \Gamma$.

\begin{theorem}\label{theorem:holderMartinKernel}
For simple random walk on a surface group $\Gamma =\Gamma_{g}$ of sufficiently
large genus $g$ there exists $\varrho =\varrho_{g} <1$ such that for
every $1\leq r\leq R$ and every geodesic ray $1=y_{0},y_{1}, y_{2},\dotsc$
converging to a point $\zeta \in \partial \Gamma$,
\begin{equation}\label{eq:convergenceRate}
	\Bigg| \frac{G_{r} (x,y_{n})}{G_{r} (1,y_{n})}-K_{r}
	(x,\zeta)\Bigg| 
	\leq C_{x}\varrho^{n}.
\end{equation}
The constants $C_{x}<\infty$ depend on $x\in \Gamma$ but not on $r\leq
R$. Consequently, for each $x\in \Gamma$ and $r\leq R$ the function
$\zeta \mapsto K_{r} (x,\zeta)$ is H\"{o}lder continuous (for some
positive exponent) relative to
the Euclidean metric on $S^{1}=\partial \Gamma $ in $\zeta$ for some
exponent not depending on $r\leq R$. Furthermore, the mapping $r
\mapsto K_{r} (x,\cdot )$ is continuous in the H\"{o}lder norm.
\end{theorem}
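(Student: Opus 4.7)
\noindent\emph{Proof plan.} The plan is to derive Theorem~\ref{theorem:holderMartinKernel} from the extension of Ancona's inequalities up to $r=R$, which is the expected content of Theorem~\ref{theorem:1}. Recall the Ancona inequality at parameter $r$: there is a constant $C_A$ such that whenever $v$ lies within a bounded distance of a geodesic segment from $u$ to $w$,
\[
F_r(u,w) \;\le\; C_A\, F_r(u,v)\, F_r(v,w),
\]
where $F_r(u,w) = G_r(u,w)/G_r(w,w)$. The reverse inequality is automatic from the first-passage decomposition, so $G_r$ is multiplicative along geodesics up to a bounded factor, \emph{uniformly in} $r \le R$. Iterating along overlapping fellow-traveling segments produces a relative Ancona inequality: if two geodesics share an initial segment of length $k$, then the corresponding ratios of Green's functions differ by a factor within $\exp(\pm C\varrho^k)$, with $\varrho<1$ and $C<\infty$ independent of $r \le R$.

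First I apply this relative inequality to the geodesic ray $1 = y_0, y_1, \ldots$ converging to $\zeta$. For $n > m$, the geodesics from $1$ and from $x$ to $y_n$ both pass within a bounded distance of $y_m$ once $m$ is large, and fellow-traveling through the tail yields
\[
\Bigg| \frac{G_r(x,y_n)}{G_r(1,y_n)} - \frac{G_r(x,y_m)}{G_r(1,y_m)} \Bigg| \;\le\; C_x \varrho^m.
\]
Thus the sequence is Cauchy with geometric rate, proving existence of the limit $K_r(x,\zeta)$ and the estimate \eqref{eq:convergenceRate}. The constants $\varrho$ and $C_x$ inherit their independence of $r$ from the Ancona constant $C_A$. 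For H\"older continuity in $\zeta$: two boundary points at visual distance $\delta$ admit geodesic rays from $1$ that fellow-travel for a length $k \asymp \log(1/\delta)$, so the triangle inequality together with \eqref{eq:convergenceRate} applied to the two rays and their common initial segment gives $|K_r(x,\zeta) - K_r(x,\zeta')| \le C_x' \delta^{\alpha}$ with a H\"older exponent $\alpha>0$ independent of $r$.

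Finally, for continuity of $r \mapsto K_r(x,\cdot)$ in the H\"older norm, the uniformity in $r$ of the rate $\varrho^n$ allows us to approximate $K_r(x,\cdot)$ in the sup norm by the finite-$n$ ratio $r \mapsto G_r(x,y_n)/G_r(1,y_n)$, which for each fixed $n$ and $\zeta$ is continuous (indeed real-analytic) in $r$ on $[1,R]$, since $G_r$ is analytic up to and including $R$ by \eqref{eq:finiteAtR} and $G_R(1,y_n)>0$. This gives sup-norm continuity of $r \mapsto K_r(x,\cdot)$; combined with the uniform H\"older bound of the previous paragraph, a standard interpolation argument upgrades the continuity to the H\"older norm of any exponent $\alpha' < \alpha$. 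The main obstacle is the first step --- establishing a uniform Ancona inequality up to and including $r = R$. Below the spectral radius this is classical (Ancona~\cite{ancona}), but at $r=R$ the random walk becomes critical and the standard contraction arguments fail; the needed uniform estimate is the substance that must be supplied by Theorem~\ref{theorem:1}.
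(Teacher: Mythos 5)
Your proposal is correct and follows essentially the same route as the paper: the paper derives Theorem~\ref{theorem:holderMartinKernel} from the uniform-in-$r$ Ancona inequalities of Theorem~\ref{theorem:1} by invoking the Anderson--Schoen/Ledrappier/Izumi--Neshveyev--Okayasu contraction argument (which is exactly your ``relative Ancona inequality'' giving geometric Cauchy decay of the ratios), noting only that every estimate in that argument is uniform on any $r$-interval where the Ancona constant is uniform. You correctly identify that the entire substantive burden lies in establishing the Ancona inequalities up to and including $r=R$, which is the content of Theorem~\ref{theorem:1} proved separately in Section~\ref{sec:walkabout}.
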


The exponential convergence \eqref{eq:convergenceRate} and the
H\"{o}lder continuity of the Martin kernel for $r=1$ were established
by Series \cite{series} for random walks on Fuchsian groups. Similar
results for the Laplace-Beltrami operator on negatively curved Cartan
manifolds were proved by Anderson and Schoen \cite{anderson-schoen}.
The methods of \cite{anderson-schoen} were adapted Ledrappier
\cite{ledrappier:review} to prove that Series' results extend to all
random walks on a free group, and Ledrappier's proof was extended by
Izumi, Neshvaev, and Okayasu \cite{izumi} to prove that for random
walk on a non-elementary hyperbolic group the Martin kernel $K_{1}
(x,\xi)$ is H\"{o}lder continuous in $\xi$. All of these proofs rest
on inequalities of the type discussed in section~\ref{ssec:anconaIneq}
below.  Theorem~\ref{theorem:1} below asserts (among other things)
that similar estimates are valid for all $G_{r}$ \emph{uniformly} for
$r\leq R$. Given these, the proof of \cite{izumi} applies almost
verbatim to establish Theorem~\ref{theorem:holderMartinKernel}:one
need only note that all estimates used in \cite{izumi} hold uniformly
for $r$ in any interval where the Ancona estimates \eqref{eq:ancona}
hold uniformly.

Routine arguments (see \cite{hamenstaedt}, Lemma~2.1, also
\cite{ledrappier:renewal}) show that Theorem~\ref{theorem:holderMartinKernel}
has the following corollary.

\begin{corollary}\label{corollary:bi}
For simple random walk on a surface group $\Gamma_{g}$ of sufficiently
large genus $g$  there is a
continuous function $\Lambda_{r} :\Gamma \times \partial \Gamma \times
\partial \Gamma \rightarrow \zz{R}_{+}$ such that for each $x\in
\Gamma$ and any two distinct points $\xi ,\zeta \in \partial \Gamma$,
if $1=y_{0},y_{1},\dotsc$ is a geodesic ray converging to $\xi$ and
$1=z_{0},z_{1},\dotsc$ a geodesic ray converging to $\zeta$, then
\begin{equation}\label{eq:bi}
	\lim_{n \rightarrow \infty} \frac{G_{r} (y_{n},x)G_{r}
	(z_{n},x)}{G_{r} (y_{n},z_{n})}=\Lambda_{r} (x;\xi ,\zeta) .
\end{equation}
The function $\Lambda_{r} (x;\xi  ,\zeta)$ vanishes when $\xi =\zeta$, and
for each $x\in \Gamma$ is jointly H\"{o}lder in $\xi ,\zeta$ relative
to a visual metric. Furthermore, for some $\varrho <1$ and constants
$C_{x,\xi ,\zeta}<\infty$ not depending on $r\leq R$,
\begin{equation}\label{eq:bi-ineq}
	\Bigg|\frac{G_{r} (y_{n},x)G_{r}
	(z_{n},x)}{G_{r} (y_{n},z_{n})}-\Lambda_{r}(x;\xi ,\zeta) \Bigg|
	\leq C_{x,\xi ,\zeta} \varrho^{n}.
\end{equation}
\end{corollary}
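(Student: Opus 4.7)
The plan is to reduce the three–point limit \eqref{eq:bi} to the one–point Martin kernel convergence supplied by Theorem~\ref{theorem:holderMartinKernel}, with the uniform–in–$r$ Ancona inequalities coming out of Theorem~\ref{theorem:1} doing the geometric work. Since the generating set $A_g$ is symmetric, $G_r$ is symmetric: $G_r(u,v)=G_r(v,u)$. Thus rewrite
\[
\frac{G_r(y_n,x)G_r(z_n,x)}{G_r(y_n,z_n)}
= \frac{G_r(y_n,x)}{G_r(y_n,1)}\cdot\frac{G_r(z_n,x)}{G_r(z_n,1)}\cdot\frac{G_r(y_n,1)G_r(1,z_n)}{G_r(y_n,z_n)}.
\]
Theorem~\ref{theorem:holderMartinKernel} identifies the first two factors in the limit as $K_r(x,\xi)$ and $K_r(x,\zeta)$, with exponential rate $\varrho^n$ and the desired joint regularity in $(\xi,\zeta,r)$. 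Everything therefore reduces to proving that
\[
\lambda_n(r):=\frac{G_r(y_n,1)G_r(1,z_n)}{G_r(y_n,z_n)}
\]
converges, as $n\to\infty$, to some $\lambda_r(\xi,\zeta)$ with the analogous exponential rate and H\"older/continuous dependence; the bi-Martin kernel is then $\Lambda_r(x;\xi,\zeta):=K_r(x,\xi)K_r(x,\zeta)\lambda_r(\xi,\zeta)$.

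To control $\lambda_n(r)$ I would use hyperbolicity. Because $\xi\neq\zeta$, the Gromov product $(y_n\,|\,z_n)_1$ is bounded above by some $N$ for all large $n$; hence every geodesic from $y_n$ to $z_n$ crosses the finite ball $B_N(1)$. Pick a vertex $w_n$ on such a geodesic nearest to $1$. The $w_n$ take only finitely many values, so after passing to a subsequence $w_n\equiv w$. The uniform Ancona inequality from Theorem~\ref{theorem:1} then furnishes constants $0<c\le C<\infty$, independent of $n$ and of $r\le R$, with
\[
c\,G_r(y_n,w)G_r(w,z_n)\le G_r(y_n,z_n)\le C\,G_r(y_n,w)G_r(w,z_n),
\]
so $\lambda_n(r)$ is trapped in a fixed compact subinterval of $(0,\infty)$.

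The main obstacle is to upgrade this \emph{boundedness} to a \emph{convergence} with exponential rate, and here the strengthened (``relative'') Ancona estimate from Theorem~\ref{theorem:1} is essential: it asserts that the multiplicative defect
\[
\frac{G_r(y_n,z_n)}{G_r(y_n,w)G_r(w,z_n)}
\]
stabilizes up to a factor $1+O(\varrho^n)$ as soon as $w$ sits on a geodesic from $y_n$ to $z_n$ at distance $\ge M$ from both endpoints. Combined with the Martin kernel convergence
\[
\frac{G_r(y_n,w)}{G_r(y_n,1)}\longrightarrow K_r(w,\xi),\qquad \frac{G_r(w,z_n)}{G_r(1,z_n)}\longrightarrow K_r(w,\zeta),
\]
both at rate $\varrho^n$ uniformly in $r\le R$, this stabilization forces $\lambda_n(r)$ to converge at rate $\varrho^n$ to a limit that, because every admissible choice of subsequence and of $w_n$ must give the same value, depends only on $(\xi,\zeta,r)$. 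This establishes \eqref{eq:bi} together with the rate \eqref{eq:bi-ineq}; joint H\"older regularity in $(\xi,\zeta)$ and continuity in $r$ transfer from the corresponding properties of $K_r(\cdot,\cdot)$ in Theorem~\ref{theorem:holderMartinKernel}. Finally, $\Lambda_r(x;\xi,\xi)=0$: when $\zeta=\xi$ the Gromov product $(y_n\,|\,z_n)_1$ diverges, and the Ancona factorization along a geodesic from $y_n$ to $z_n$ avoiding any bounded neighborhood of $1$ shows that $G_r(y_n,z_n)$ dominates $G_r(y_n,1)G_r(1,z_n)$ by a factor tending to infinity, so $\lambda_n(r)\to 0$.
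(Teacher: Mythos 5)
Your opening reduction is the natural one: by symmetry of $G_{r}$ the three--point ratio factors as $\frac{G_{r}(y_{n},x)}{G_{r}(y_{n},1)}\cdot\frac{G_{r}(z_{n},x)}{G_{r}(z_{n},1)}\cdot\lambda_{n}(r)$ with $\lambda_{n}(r)=G_{r}(y_{n},1)G_{r}(1,z_{n})/G_{r}(y_{n},z_{n})$; the first two factors are controlled by Theorem~\ref{theorem:holderMartinKernel}, the two--sided boundedness of $\lambda_{n}(r)$ does follow from Theorem~\ref{theorem:1}(B) together with the supermultiplicativity \eqref{eq:GreenSuperM}, and your argument that $\Lambda_{r}$ vanishes on the diagonal is sound. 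The gap is in the step you yourself flag as ``the main obstacle.'' You dispose of it by invoking a ``strengthened (relative) Ancona estimate from Theorem~\ref{theorem:1}'' to the effect that the defect $G_{r}(y_{n},z_{n})/\bigl(G_{r}(y_{n},w)G_{r}(w,z_{n})\bigr)$ stabilizes to within a factor $1+O(\varrho^{n})$. Theorem~\ref{theorem:1} contains no such statement: part (B) gives only the inequality \eqref{eq:ancona} with a multiplicative constant $C$ uniform in $r\leq R$, and the reverse inequality is trivial; nothing in the paper controls the oscillation of this ratio as $n$ grows. Worse, this defect is precisely the reciprocal of the quantity in \eqref{eq:bi} with $x=w$, so the stabilization you assert is, at the crux, the very statement \eqref{eq:bi}--\eqref{eq:bi-ineq} being proved. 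As written the argument is circular.

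For comparison, the paper offers no proof either: it asserts that the corollary follows from Theorem~\ref{theorem:holderMartinKernel} by routine arguments, citing Hamenstaedt's Lemma~2.1 and Ledrappier. One honest way to close your gap along those lines, with no strong Ancona input, is a first--entrance decomposition: since $\xi\neq\zeta$, fix a ball $B$ that every geodesic $[y_{n},z_{n}]$ must cross for large $n$, write $G_{r}(y_{n},z_{n})=G_{r}(y_{n},z_{n};B^{c})+\sum_{w\in\partial B}G_{r}(y_{n},w;B^{c})G_{r}(w,z_{n})$, show via Lemma~\ref{lemma:walkabout} and the Harnack lower bounds that the first term is exponentially negligible relative to the sum, and then divide by $G_{r}(y_{n},1)G_{r}(1,z_{n})$: each of the finitely many summands splits into a restricted kernel depending on $y_{n}$ alone and a Martin--kernel ratio depending on $z_{n}$ alone, each converging at rate $\varrho^{n}$ uniformly in $r\leq R$ by (the proof of) Theorem~\ref{theorem:holderMartinKernel}. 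Until such a step is supplied, your proof rests on an estimate the paper never establishes.
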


\subsection{Ancona's boundary Harnack inequalities}\label{ssec:anconaIneq}

The crux of Ancona's argument in \cite{ancona} was a system of
inequalities that assert, roughly, that the Green's function $G_{R}
(x,y)$ is nearly submultiplicative in the arguments $x,y\in
\Gamma$. Ancona \cite{ancona} proved that such inequalities always
hold for $r<R$: in particular, he proved, for an  symmetric
nearest neighbor random walk with finitely supported step distribution
on a hyperbolic group, that for each
$r<R$ there is a constant $C_{r}<\infty$ such that for every geodesic
segment $x_{0}x_{1}\dotsb x_{m}$ in (the Cayley graph of) $\Gamma$,
\begin{equation}\label{eq:ancona}
	G_{r} (x_{0},x_{m})\leq C_{r} G_{r} (x_{0},x_{k})G_{r}
	(x_{k},x_{m}) \qquad \forall \, 1\leq k\leq m.
\end{equation}
His argument depends in an essential way on the hypothesis $r<R$
(cf. his Condition (*)), and it leaves open the possibility that the
constants $C_{r}$ in the inequality \eqref{eq:ancona} might blow up as
$r \rightarrow R$. For finite-range random walk on a free group it can
be shown, by direct calculation, that the constants $C_{r}$ remain
bounded as $r \rightarrow R$, and that the inequalities
\eqref{eq:ancona} remain valid at $r=R$ (cf. \cite{lalley:frrw}). The
following result asserts that the same is true for random walk on the
surface group $\Gamma_{g}$ of large genus $g$.

\begin{theorem}\label{theorem:1}
For simple random walk on a surface group $\Gamma_{g}$ of sufficiently
large genus $g$,
\begin{enumerate}
\item [(A)] the Green's function $G_{R} (1,x) $ decays exponentially
in $|x|:=d (1,x)$; and
\item [(B)] Ancona's inequalities \eqref{eq:ancona} hold for all
$r\leq R$, with a constant $C$ independent of $r$.
\end{enumerate}
\end{theorem}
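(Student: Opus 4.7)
The plan is to obtain both conclusions from a thermodynamic-formalism analysis of a matrix-valued first-passage generating function indexed by the states of a Cannon-type automaton for $\Gamma_g$, treating the case $r=R$ as a perturbation of the free-group situation handled in \cite{lalley:frrw}. The hyperbolic group $\Gamma_g$ has a finite-state automaton $\mathcal{A}$ whose states are cone types and which accepts exactly the set of geodesic words in the standard generators $A_g$; the associated one-sided subshift of finite type $\Sigma_{\mathcal{A}}$ codes the geodesic rays, and hence the Gromov boundary $\partial\Gamma_g = S^{1}$.

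Step 1 is to decompose the Green's function along geodesics. For each edge $(u,v)$ of the Cayley graph and each cone type $s$, let $F_r(s;u,v)$ be the first-passage generating function for hitting $v$ from $u$ through the half-space beyond $u$. Iterating the strong Markov property along a geodesic segment $x_0 x_1 \cdots x_m$ produces a factorization of the form
\[
G_r(x_0,x_m) \;=\; G_r(x_m,x_m)\prod_{k=1}^{m} F_r(s_k;\,x_{k-1},x_k),
\]
where $s_k$ is the automaton state reached by the geodesic prefix $x_0^{-1}x_k$. Ancona's inequality \eqref{eq:ancona} is then equivalent to a matching \emph{lower} bound on the analogous backward factorization, which reduces to uniform positive lower bounds on the first-passage kernels.

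Step 2 is to package these kernels into a Ruelle transfer operator $\mathcal{L}_r$ acting on H\"{o}lder functions over $\Sigma_{\mathcal{A}}$. For $r<R$, Ancona's original argument \cite{ancona} together with Ruelle--Perron--Frobenius gives a simple dominant eigenvalue $\lambda(r)$ with a strict spectral gap; the map $r\mapsto \lambda(r)$ is real-analytic and strictly increasing. Theorem~\ref{theorem:1} then reduces to two claims about $\mathcal{L}_R$: (i) $\mathcal{L}_R$ is still quasi-compact with a simple dominant eigenvalue, so the Ancona constants $C_r$ in \eqref{eq:ancona} remain bounded as $r\uparrow R$, yielding (B); and (ii) $\lambda(R)<1$, which, combined with the factorization of Step 1, yields an exponential bound $G_R(1,x)\le C\,\lambda(R)^{|x|}$ and thus (A).

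The main obstacle is (i), since Ancona's contraction argument uses $r<R$ essentially (his Condition ($\ast$)) and can in principle fail in the limit. My approach is perturbative, and this is precisely where the large-genus hypothesis is used. For large $g$ the defining $4g$-gon relation acts only at a very large scale, so the Cayley graph of $\Gamma_g$ is locally indistinguishable from that of the free group $F_{2g}$; the automaton $\mathcal{A}$ differs from the tree automaton of $F_{2g}$ only through transitions labelled by long words implicated by the relation. Treating $\mathcal{L}_r$ as a small perturbation, in an appropriate H\"{o}lder norm, of the corresponding free-group transfer operator---for which both the spectral gap at $R$ and the strict bound $\lambda(R)<1$ are established in \cite{lalley:frrw}---a Lasota--Yorke / Keller--Liverani stability argument for quasi-compact operators transfers the gap to $\mathcal{L}_R$ uniformly in $r\le R$, once $g$ is sufficiently large. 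The genus threshold in the statement is exactly the one required to make this perturbation small enough to preserve the gap and the strict inequality $\lambda(R)<1$.
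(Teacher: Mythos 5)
There are two genuine gaps here, and they sit at the heart of the argument. First, the factorization in your Step 1, $G_r(x_0,x_m)=G_r(x_m,x_m)\prod_k F_r(s_k;x_{k-1},x_k)$, is an identity only when the Cayley graph is a tree: on a surface group a path from $x_0$ to $x_m$ need not visit any intermediate vertex of the geodesic, so the strong Markov property gives no exact product formula (the paper makes exactly this point in Note~\ref{remark:metrics}: $F_R(x,z)=F_R(x,y)F_R(y,z)$ forces every path to pass through $y$). What is true is only the supermultiplicative inequality \eqref{eq:GreenSuperM}; the reverse inequality with a uniform constant \emph{is} Ancona's inequality, so your Step 1 assumes the conclusion. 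Second, the perturbative Step 2 is not set up in a way that can work: the Cannon automaton for $\Gamma_g$ has state space growing with $g$ (words of length $\le 2g$), so there is no fixed Banach space on which $\mathcal{L}_r^{\Gamma_g}$ is a small perturbation of the free-group operator, and ``local indistinguishability'' of the Cayley graphs below scale $2g$ says nothing about $G_R$, which sums over arbitrarily long paths that do feel the relator. There is also a circularity risk: in this paper the H\"{o}lder potential $\varphi_r$ on the subshift is built from Martin-kernel ratios whose H\"{o}lder continuity (Theorem~\ref{theorem:holderMartinKernel}) is itself deduced \emph{from} the uniform Ancona inequalities, so the transfer-operator machinery is only available downstream of Theorem~\ref{theorem:1}, not as a route to it.

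The paper's actual route is geometric, not spectral. The free-group comparison you want to exploit is used, but only to prove the a priori estimate $\lim_{g\to\infty}\sup_{x\ne 1}F_R(1,x)=0$ (Proposition~\ref{proposition:vanishingF}): paths of length $<2g$ lift to the covering walk on $\mathcal{F}_{2g}$, while longer paths are controlled by tail sums of $G_R(1,1)$, which is shown via Zuk's bound, Catalan-number estimates, and the renewal/retracing inequalities to tend to $2$. Given that $F_R(1,x)\le\beta$ uniformly with $\beta$ small, the proof of (A) uses planarity: the semigroups $\mathcal{F}_A^{\pm},\mathcal{F}_B^{\pm}$ embed as geodesic trees of outdegree $g$, every geodesic segment contains order $|x|/\kappa_g$ disjoint ``barriers'' (pairs of such trees flanking a cut point), and a recursive tree-crossing bound ($y=\beta+y^d$) shows each barrier crossing costs a factor $<1/2$. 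Part (B) is then a bootstrap: paths avoiding a $\sqrt{m}$-ball around a cut point must cross $g^{\sqrt m}$ subtrees and contribute doubly-exponentially little, giving $C_m\le(1+2\alpha^{g^{\sqrt m}}/\varrho^{2m})C_{[.9m]}$ and hence bounded constants. If you want to salvage your approach, you would need to replace the exact factorization by the barrier/cut-point structure, at which point you have essentially reconstructed the paper's argument.
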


\begin{note}\label{note:1}
Here  and throughout the paper $d (x,y)$ denotes the distance between
the vertices $x$ and $y$ in the Cayley graph $G^{\Gamma}$,
equivalently, distance in the word metric. 
\emph{Exponential decay} of the
Green's function means \emph{uniform} exponential decay in all
directions, that is, there are constants $C<\infty$ and
$\varrho <1$ such that for all $x,y\in \Gamma_{g}$,
\begin{equation}
\label{eq:expDecay}
	G_{R} (x,y)\leq C\varrho^{d (x,y)}.
\end{equation}
A very simple argument, given in section \ref{ssec:backscattering}
below, shows that for a symmetric random walk on any nonamenable group $G_{R}
(1,x) \rightarrow 0$ as $|x| \rightarrow \infty$. Given this,
it is routine to show that exponential decay of the Green's function
follows from Ancona's inequalities. Nevertheless, an independent ---
and simpler --- proof of exponential decay is given in
section~\ref{ssec:barriers}.
\end{note}

\begin{note}\label{note:hamenstaedt}
Theorem~\ref{theorem:1} (A) is a discrete analogue of one of the main
results (Theorem B) of Hamenstaedt \cite{hamenstaedt} concerning the
Green's function of the Laplacian on the universal cover of a compact
negatively curved manifold.  Unfortunately, Hamenstaedt's proof
appears to have a serious error.\footnote{The error is in the proof of
Lemma~3.1: The claim is made that a lower bound on a finite measure
implies a lower bound for its Hausdorff-Billingsley dimension relative
to another measure. This is false -- in fact such a lower bound on
measure implies an \emph{upper} bound on its Hausdorff-Billingsley
dimension. } The approach taken here bears no resemblance to that of
\cite{hamenstaedt}.
\end{note}

Theorem ~\ref{theorem:1} is proved in section~\ref{sec:walkabout}
below. The argument uses  the \emph{planarity} of the Cayley
graph $G^{\Gamma}$ of a surface group in an essential way.
In addition,
it requires certain \emph{a priori} estimates on the Green's function,
established in section~\ref{sec:apriori}, specifically (see
Proposition~\ref{proposition:vanishingF}), that
\begin{equation}\label{eq:apriori}
	\lim_{ g \rightarrow \infty} \sup_{x\not =1}G_{R} (1,x)=0;
\end{equation}
it is here that the hypothesis of large genus is used.  The proof of
\eqref{eq:apriori} also relies on the fact that the step distribution
is uniform on the generating set $A_{g}$, together with a bound for
the inverse spectral radius $R=R_{g}$ of the simple random walk on
$\Gamma_{g}$ due to Zuk \cite{zuk} (see also Bartholdi \emph{et al}
\cite{bartholdi} and Nagnibeda \cite{nagnibeda}):
\begin{equation}\label{eq:zuk}
		R_{g}>\sqrt{g}.
\end{equation}
It is concievable that a suitable substitute for the estimate
\eqref{eq:apriori} could be established more generally, without the
symmetry hypothesis on the step distribution and without appealing to
Zuk's inequality on the spectral radius. If so, all of our results
concerning the asymptotic behavior of the Green's function would
hold at this level of generality.

\subsection{Decay at infinity of the Green's
function}\label{ssec:decayRate} Neither Ancona's result nor Theorem
\ref{theorem:1} gives any information about how the uniform
exponential decay rate $\varrho$ depends on the step distribution of
the random walk. In fact, the Green's function $G_{r} (1,x)$ decays at
different rates in different directions $x \rightarrow \partial
\Gamma$. To quantify the overall decay, consider the behavior of the
Green's function over the entire sphere $S_{m}$ of radius $m$ centered
at $1$ in the Cayley graph $G^{\Gamma}$.  If $\Gamma$ is nonelementary
and word-hyperbolic then the cardinality of the sphere $S_{m}$ grows
exponentially in $m$ (see Corollary~\ref{corollary:sphereGrowth} in
section~\ref{sec:cannon}): there exist constants $C>0$ and $\zeta >1$
such that as $m \rightarrow \infty$,
\begin{equation}\label{eq:sphereGrowthRate}
	 |S_{m}| \sim C \zeta^{m}.
\end{equation}

\begin{theorem}\label{theorem:2}
For simple random walk on a surface group $\Gamma_{g}$ of sufficiently
large genus $g$,
\begin{equation}\label{eq:backscatterA}
	 \lim_{m \rightarrow \infty}
	 \sum_{x\in S_{m}}G_{R} (1,x)^{2} = C>0
\end{equation}
exists and is finite, and 
\begin{equation}\label{eq:lp}
	\# \{x\in \Gamma \, : \, G_{R} (1,x)\geq \varepsilon \}
	\asymp
	\varepsilon^{-2}
\end{equation}
as $\varepsilon  \rightarrow 0$. (Here $\asymp$ means that the ratio
of the two sides remains bounded away from $0$ and $\infty$.)
\end{theorem}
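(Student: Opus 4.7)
The plan is to derive Theorem~\ref{theorem:2} from a thermodynamic-formalism argument on a coding of geodesics in $\Gamma_g$, using as inputs the uniform Ancona inequalities at the spectral radius (Theorem~\ref{theorem:1}(B)) and the uniform H\"older continuity of the Martin kernel up to $r=R$ (Theorem~\ref{theorem:holderMartinKernel}). First I would invoke Cannon's theorem for hyperbolic groups to obtain a finite, topologically mixing directed graph $\mathcal{G}$ (an ``automaton'') with a distinguished start vertex, such that paths of length $n$ issuing from that vertex are in bijection with geodesic words representing the elements of the sphere $S_n$. Let $\Sigma$ be the one-sided subshift of finite type of infinite paths in $\mathcal{G}$, and write $\pi_n(\omega)\in S_n$ for the group element coded by the length-$n$ prefix of $\omega\in\Sigma$.

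On $\Sigma$ I would construct a H\"older continuous potential $\phi : \Sigma \to \mathbb{R}$ satisfying
\[
G_R\bigl(1,\pi_n(\omega)\bigr) \;\asymp\; G_R(1,1)\,e^{-S_n\phi(\omega)}
\qquad \text{uniformly in } \omega\in\Sigma,\ n\geq 1,
\]
where $S_n\phi=\sum_{k=0}^{n-1}\phi\circ\sigma^k$ is the Birkhoff sum. Existence of such $\phi$ with bounded multiplicative error follows from the uniform Ancona inequalities, which yield quasi-multiplicativity of $F_R$ along geodesics, while the exponential convergence rate~\eqref{eq:convergenceRate} in Theorem~\ref{theorem:holderMartinKernel} upgrades $\phi$ to H\"older regularity; concretely $\phi(\omega)$ is essentially the logarithm of the limiting ratio $K_R(1,\xi(\omega))/K_R(\pi_1(\omega)^{-1},\xi(\omega))$ along the ray coded by $\omega$. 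Applying the Ruelle--Perron--Frobenius theorem to the transfer operator $\mathcal{L}_{2\phi}$ on H\"older functions then yields a simple leading eigenvalue $\lambda$ with positive eigenfunction $h$ and a spectral gap, whence
\[
\sum_{x\in S_m} G_R(1,x)^2 \;=\; \lambda^m\bigl(A + O(\theta^m)\bigr),\qquad A>0,\ \theta\in(0,1).
\]

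The main obstacle is to establish the critical identity $\lambda=1$. My approach would be to identify the Ruelle eigenmeasure of $\mathcal{L}_{2\phi}$, suitably normalized and pushed forward under $\omega\mapsto\xi(\omega)\in\partial\Gamma_g$, with the Patterson--Sullivan-type conformal measure built from the uniformly H\"older Martin kernel: that kernel furnishes a family $\{\mu_x\}_{x\in\Gamma}$ of mutually absolutely continuous probability measures on $\partial\Gamma_g$ with $d\mu_x/d\mu_1 = K_R(x,\cdot)$, and $\mu_1$ arises naturally as the weak$^*$ limit of the normalized spherical measures $Z_m^{-1}\sum_{x\in S_m}G_R(1,x)^2\,\delta_{\xi_m(x)}$, where $Z_m=\sum_{x\in S_m}G_R(1,x)^2$ and $\xi_m$ is any near-geodesic boundary projection from $S_m$. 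For this weak$^*$ limit to be a nontrivial probability measure, the normalizations $Z_m$ must stay bounded between two positive constants, which in view of the Ruelle asymptotics forces $\lambda=1$. Equivalently, conjugating $\mathcal{L}_{2\phi}/\lambda$ by $h$ produces a stochastic transfer operator on $\Sigma$ whose invariant measure descends to a conformal density on $\partial\Gamma_g$ of a specific dimension, and matching this dimension against the one supplied by the H\"older Martin kernel pins down $\lambda=1$.

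The level-set estimate~\eqref{eq:lp} then follows from part~(1) combined with Theorem~\ref{theorem:1}(A). The uniform exponential bound $G_R(1,x)\leq C\varrho^{|x|}$ confines $\{x : G_R(1,x)\geq\varepsilon\}$ to a ball of radius $O(\log(1/\varepsilon))$, while the sphere-sum limit $Z_m\to C>0$ combined with the sphere-growth rate $|S_m|\asymp\zeta^m$ (Corollary~\ref{corollary:sphereGrowth}) forces the typical order of $G_R(1,x)$ on $S_m$ to be $\zeta^{-m/2}$, so that a positive proportion of each $S_m$ lies in the superlevel set for all $m \lesssim 2\log(1/\varepsilon)/\log\zeta$. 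Summing sphere cardinalities up to this radius then produces the matching lower and upper bounds $\#\{x:G_R(1,x)\geq\varepsilon\}\asymp\zeta^{2\log(1/\varepsilon)/\log\zeta}=\varepsilon^{-2}$, as asserted.
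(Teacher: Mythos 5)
Your setup (Cannon automaton, H\"older potential $\phi=\varphi_R$ built from the Martin kernel, Ruelle--Perron--Frobenius asymptotics $\sum_{x\in S_m}G_R(1,x)^2=\lambda^m(A+O(\theta^m))$) is exactly the paper's framework, and that part is fine. The gap is in the one step you yourself flag as the main obstacle: the identity $\lambda=1$. Your argument is circular. The measures $Z_m^{-1}\sum_{x\in S_m}G_R(1,x)^2\,\delta_{\xi_m(x)}$ are probability measures \emph{by construction}, for any value of $\lambda$, so the existence of a nontrivial weak$^*$ limit puts no constraint whatsoever on $Z_m$; and a conformal family $\{\mu_x\}$ with $d\mu_x/d\mu_1=K_r(x,\cdot)$ exists for every $r\le R$ (it is just the representing measure of the constant harmonic function $1$ on the Martin boundary), yet the paper shows $\text{Pressure}(2\varphi_r)<0$ strictly for every $r<R$. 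So no argument that uses only the existence and H\"older regularity of the Martin kernel can pin down $\lambda$; you must use the defining property of $R$ as the radius of convergence of $G_r(1,1)$, and your proposal never does. The paper's proof is genuinely two-sided: the inequality $\text{Pressure}(2\varphi_R)\le 0$ comes from the differential equation \eqref{eq:GPrime}, which identifies $\sum_{x}G_r(1,x)^2$ with $r\,\frac{d}{dr}G_r(1,1)+G_r(1,1)<\infty$ for $r<R$, plus continuity of the pressure; the reverse inequality comes from a branching-random-walk (equivalently, convolution-power) argument, Lemmas~\ref{lemma:colors} and \ref{lemma:snapback}, showing that exponential decay of $\sum_{x\in S_m}G_R(1,x)^2$ would force $G_r(1,1)<\infty$ for some $r>R$ --- a contradiction. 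That second half is where the Ancona inequalities at $R$ and hyperbolicity (thin triangles) do real work, and nothing in your proposal replaces it.

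Your deduction of \eqref{eq:lp} also does not go through. From $Z_m\to C$ and $|S_m|\asymp\zeta^m$ you conclude that a positive proportion of $S_m$ satisfies $G_R(1,x)\gtrsim\zeta^{-m/2}$, but boundedness of an average gives no lower bound on the proportion of near-maximal terms: the mass of $\sum_{x\in S_m}G_R(1,x)^2$ is carried by the Gibbs state for $2\varphi_R$, which is in general singular with respect to the measure of maximal entropy (uniform measure on $S_m$), so the set of $x\in S_m$ with $G_R(1,x)\ge c\,\zeta^{-m/2}$ typically has cardinality $o(|S_m|)$. The correct route, which the paper takes, is to deduce \eqref{eq:lp} from $\text{Pressure}(2\varphi_R)=0$ via the renewal-theoretic counting theorem of \cite{lalley:renewal} applied to the Birkhoff sums $S_n\varphi_R$ over the subshift; this is an orbital-counting argument, not a pigeonhole on spheres.
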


The proof is carried out in sections~\ref{sec:gibbs}--\ref{sec:thermo}
below (cf. Propositions~\ref{proposition:gGreenAsymptotics} and
\ref{proposition:pressureEqualsZero}), using the fact that any
hyperbolic group has an \emph{automatic structure}
\cite{ghys-deLaHarpe}. The automatic structure will permit us to use
the theory of \emph{Gibbs states} and \emph{thermodynamic formalism}
of Bowen \cite{bowen}, ch.~1. Theorem~\ref{theorem:holderMartinKernel}
is essential for this, as the theory developed in \cite{bowen} applies
only to H\"{o}lder continuous functions.

 It is likely that $\asymp$ can be
replaced by $\sim$ in \eqref{eq:lp}. Note the resemblance between
relation \eqref{eq:lp} and the asymptotic formula for the number of
lattice points in the ball of radius $m$: this is no accident, because
$\log G_{R} (x,y)/G_{R}(1,1)$ is a metric on $\Gamma $ quasi-isometric
to the word metric (sec.~\ref{ssec:superadditivity} below).  There is
a simple heuristic argument that suggests why the sums $\sum_{x\in
S_{m}}G_{R} (1,x)^{2}$ should remain bounded as $m \rightarrow
\infty$: Since the random walk is $R-$transient, the contribution to
$G_{R} (1,1)<\infty$ from random walk paths that visit $S_{m}$ and
then return to $1$ is bounded (by $G_{R} (1,1)$).  For any $x\in
S_{m}$, the term $G_{R}(1,x)^{2}/G_{R}(1,1)$ is the contribution to
$G_{R} (1,1)$ from paths that visit $x$ before returning to $1$. Thus,
if $G_{R} (1,x)$ is not substantially larger than
\[
	\sum_{n=1}^{\infty} P^{1}\{X_{n}=x \; \text{and} \;\tau (m)=n\}R^{n},
\]
where $\tau (m)$ is the time of the first visit to $S_{m}$, then the
sum in \eqref{eq:backscatterA} should be of the same order of magnitude
as the total contribution to $G_{R} (1,1)<\infty$ from random walk
paths that visit $S_{m}$ and then return to $1$. Of course, the
difficulty in making this heuristic argument rigorous is that \emph{a
priori} one does not know that paths that visit $x$ are likely to be
making their first visits to $S_{m}$; it is Ancona's inequality
\eqref{eq:ancona}  that ultimately fills the gap.

\begin{note}\label{note:greenOnSphere}
A simple argument shows that for $r>1$ the sum of the Green's function on the
sphere $S_{m}$, unlike the sum of its square, explodes as $m
\rightarrow \infty$. Fix $1< r\leq R$ and $m\geq 1$. Since $X_{n}$
is transient, it will, with probability one, eventually visit the
sphere $S_{m}$. Since the steps of the random walk are of size $1$,
the minimum number of steps needed to reach $S_{m}$ is $m$. Hence,
\begin{align*}
	\sum_{x\in S_{m}}G_{r}(1,x)&=\sum_{n=m}^{\infty} \sum_{x\in
	S_{m}} P^{1}\{X_{n}=x \}r^{n}\\
	&\geq r^{m}\sum_{n=m}^{\infty} P^{1}\{X_{n}\in S_{m} \}\\
	&\geq r^{m} P^{1}\{X_{n}\in S_{m} \text{ for some}\; n\}\\
	&=r^{m}.
\end{align*}
\end{note}

\begin{note}\label{note:ledrappier}
There are some precedents for the result \eqref{eq:backscatterA}.
Ledrappier \cite{ledrappier:renewal} has shown that for Brownian motion on the
universal cover of a compact Riemannian manifold of negative
curvature, the integral of the Green's function $G_{1}
(x,y)=\int_{0}^{\infty}p_{t} (x,y)\,dt$ over the sphere $S(\varrho
,x)$ of radius $\varrho $ centered at a fixed point $x$ converges as
$\varrho \rightarrow \infty$ to a positive constant $C$ independent of
$x$. Hamenstaedt \cite{hamenstaedt} proves in the same context that
the integral of $G_{R}^{2}$ over $S (\varrho ,x)$ remains bounded as
the radius $\varrho \rightarrow \infty$. Our arguments (see
Note~\ref{note:theta} in sec.~\ref{sec:cannon}) show that for simple
random walk on a surface group of large genus the following is true:
for each value of $r$ there exists a power $1\leq \theta =\theta
(r)\leq 2$ such that
\[
	 \lim_{m \rightarrow \infty}
	 \sum_{x\in S_{m}}G_{r} (1,x)^{\theta } = C_{r}>0.
\]
\end{note}

\subsection{Critical exponent for the  Green's
function}\label{ssec:criticalExponent} 

Theorem \ref{theorem:2} implies that
the behavior of the Green's function $G_{R}(x,y)$ at the radius of
convergence as $y$ approaches the geometric boundary is intimately
related to the behavior of  $G_{r} (x,y)$ as $r\uparrow R$. The
connection between the two is rooted in the following set of
differential equations.

\begin{proposition}\label{proposition:GPrime}
\begin{equation}\label{eq:GPrime}
	\frac{d}{dr}G_{r} (x,y)=r^{-1}\sum_{z\in \Gamma}
			  G_{r} (x,z)G_{r} (z,y) -r^{-1}G_{r} (x,y)
	\quad \forall \; 0\leq r <R.
\end{equation}
\end{proposition}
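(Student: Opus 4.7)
The plan is to prove this by a direct power-series manipulation, exploiting the Chapman--Kolmogorov identity
\[
    p_{n}(x,y) := P^{x}\{X_{n}=y\} = \sum_{z\in\Gamma}\sum_{k=0}^{n} p_{k}(x,z)\,p_{n-k}(z,y)\cdot\frac{1}{n+1},
\]
or rather its unaveraged version $\sum_{z}p_{k}(x,z)p_{j}(z,y)=p_{k+j}(x,y)$. Since the defining series \eqref{eq:green} has radius of convergence $R$, for any $r\in[0,R)$ I may differentiate term-by-term to obtain
\[
    \frac{d}{dr}G_{r}(x,y) \;=\; \sum_{n=1}^{\infty} n\, p_{n}(x,y)\,r^{n-1} \;=\; r^{-1}\sum_{n=0}^{\infty} n\, p_{n}(x,y)\, r^{n}.
\]

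Next I would compute the convolution on the right of \eqref{eq:GPrime}. Because all terms are nonnegative, Tonelli's theorem justifies unrestricted rearrangement, and
\[
    \sum_{z\in\Gamma} G_{r}(x,z)\,G_{r}(z,y)
    \;=\; \sum_{k,j\geq 0} r^{k+j}\sum_{z\in\Gamma} p_{k}(x,z)\,p_{j}(z,y)
    \;=\; \sum_{n=0}^{\infty} (n+1)\,p_{n}(x,y)\,r^{n}.
\]
Subtracting $G_{r}(x,y)=\sum_{n}p_{n}(x,y)r^{n}$ from the right-hand side yields $\sum_{n} n\,p_{n}(x,y)\,r^{n}$, which is exactly $r$ times the derivative computed above. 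Dividing by $r$ gives \eqref{eq:GPrime}.

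The only point requiring care is the finiteness of the iterated sum $\sum_{z}G_{r}(x,z)G_{r}(z,y)$ for $r<R$; but this is immediate from the rearrangement, since $\sum_{n}(n+1)p_{n}(x,y)r^{n}<\infty$ whenever $r$ lies strictly inside the disk of convergence of $G_{r}(x,y)$. There is no genuine obstacle here: the identity is a standard consequence of Chapman--Kolmogorov and the fact that differentiating a power series inside its disk of convergence is valid.
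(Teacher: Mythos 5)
Your proposal is correct and is essentially the paper's own argument in a different notation: the paper splits each individual path of length $m$ at each of its vertices and uses multiplicativity of the weight $w_{r}$, which is exactly your Chapman--Kolmogorov computation showing that the coefficient of $r^{n}$ in $\sum_{z}G_{r}(x,z)G_{r}(z,y)$ is $(n+1)p_{n}(x,y)$. Both proofs justify the term-by-term differentiation and the rearrangement by nonnegativity of the coefficients, so there is nothing to add.
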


Although the proof is elementary (cf. section \ref{ssec:GFs} below)
these differential equations have not (to my knowledge) been observed
before.  Theorem~\ref{theorem:2} implies that the sum in
equation~\eqref{eq:GPrime} blows up as $r \rightarrow R-$; this is
what causes the singularity of $r\mapsto G_{r} (1,1)$ at $r=R$.  The
rate at which the sum blows up determines the \emph{critical exponent}
for the Green's function, that is, the exponent $\alpha $ for which
$G_{R} (1,1) -G_{r} (1,1)\sim C (R-r)^{\alpha}$. The following theorem
asserts that the critical exponent is $1/2$.

\begin{theorem}\label{theorem:criticalExponent}
For simple random walk on a surface group $\Gamma_{g}$ of sufficiently
large genus, there exist constants $C_{x,y}>0$ such that as $r
\rightarrow R-$,
\begin{equation}\label{eq:criticalExponent}
	G_{R} (x,y)-G_{r} (x,y) \sim C_{x,y}\sqrt{R-r}.
\end{equation}
\end{theorem}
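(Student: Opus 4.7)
The plan is to use Proposition~\ref{proposition:GPrime} to convert the problem into an asymptotic for
\[
\Phi_{xy}(r) := \sum_{z\in \Gamma} G_r(x,z)\,G_r(z,y)
\]
as $r \uparrow R-$. Rewriting the differential equation as $r G'_r(x,y) = \Phi_{xy}(r) - G_r(x,y)$, and noting that $G_r(x,y)$ stays bounded up to $r=R$ by \eqref{eq:finiteAtR}, the theorem will follow by integration once I establish $\Phi_{xy}(r) \sim K_{xy}(R-r)^{-1/2}$: that asymptotic yields $G'_r(x,y) \sim (K_{xy}/R)(R-r)^{-1/2}$, whence $G_R(x,y)-G_r(x,y) \sim (2K_{xy}/R)\sqrt{R-r}$.

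To analyze $\Phi_{xy}$, decompose by spheres: $\Phi_{xy}(r) = \sum_{m\ge 0} Z_{m,xy}(r)$ with $Z_{m,xy}(r) := \sum_{|z|=m} G_r(x,z) G_r(z,y)$. Theorem~\ref{theorem:2} gives $Z_{m,1,1}(R) \to C_0 > 0$; combined with Theorem~\ref{theorem:holderMartinKernel} and Corollary~\ref{corollary:bi} the same argument extends to $Z_{m,xy}(R) \to C_{xy} > 0$. More quantitatively, the thermodynamic formalism on the automatic structure of $\Gamma_g$ (developed to prove Theorem~\ref{theorem:2}) represents
\[
Z_{m,xy}(r) = c_{xy}(r)\,e^{mP(r)}\bigl(1+O(e^{-\delta m})\bigr)
\]
uniformly in $r \in [1,R]$, where $P(r)$ is the topological pressure of a H\"older potential $\varphi_r$ built from the Green ratios $G_r(u,v)/G_r(1,1)$. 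The identity $P(R)=0$ is exactly the content of Theorem~\ref{theorem:2}, and for $r<R$ one has $P(r)<0$, so that $\Phi_{xy}(r) \sim c_{xy}(R)/(1-e^{P(r)})$.

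The crux is then to show $1 - e^{P(r)} \sim \kappa\sqrt{R-r}$ as $r\uparrow R-$, and here I intend to run a self-consistency argument. The potential $\varphi_r$ is built from $r$ and from Green ratios that extend H\"older-continuously up to $r=R$ by Theorem~\ref{theorem:holderMartinKernel}, but it also depends on the normalization $G_r(1,1)$, which carries exactly the critical singularity under investigation. If one posits $G_R(1,1) - G_r(1,1) \sim C(R-r)^{\alpha}$ for some unknown $\alpha \in (0,1)$, then first-order perturbation of the Ruelle--Perron--Frobenius eigenvalue of the transfer operator of $\varphi_r$ (whose top eigenpair is simple with strictly positive eigenfunctions, so the relevant matrix element does not vanish) gives $1 - e^{P(r)} \sim c_1 (R-r)^{\alpha}$. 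Substituting into the spherical sum yields $\Phi_{1,1}(r) \sim c_2 (R-r)^{-\alpha}$, and integrating the ODE produces $G_R(1,1) - G_r(1,1) \sim c_3 (R-r)^{1-\alpha}$. Self-consistency forces $\alpha = 1-\alpha$, i.e.\ $\alpha = 1/2$.

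The hard part is making this self-consistent step rigorous rather than just heuristic. Concretely, I must verify (i) that the transfer operator varies with $r$ with the same modulus of continuity as $G_r(1,1)$, so that the first-order perturbation captures the correct exponent; (ii) that the directional derivative of the pressure in that distinguished direction is nonzero, i.e.\ the perturbation is non-degenerate; and (iii) uniform control of the Ruelle--Perron--Frobenius remainder for $r\in [1,R]$, which should follow from the H\"older continuity in $r$ provided by the last assertion of Theorem~\ref{theorem:holderMartinKernel}. Once the asymptotic is established at $x=y=1$, extension to general $(x,y)$ follows from the uniform Ancona inequalities of Theorem~\ref{theorem:1}(B), which identify the limit $\Phi_{xy}(r)/\Phi_{1,1}(r)$ in terms of the Martin kernel and produce the constant $C_{x,y}$ from $C_{1,1}$.
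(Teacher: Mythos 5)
Your proposal follows the paper's opening moves exactly: the differential equation of Proposition~\ref{proposition:GPrime}, the reduction to the quadratic sum $\Phi_{1,1}(r)=\eta(r)$, its spherical decomposition, and the identification $\eta(r)\sim C/(1-\exp\{\text{Pressure}(2\varphi_r)\})$ together with $\text{Pressure}(2\varphi_R)=0$. The divergence --- and the gap --- is in how you extract the rate at which the pressure tends to $0$. Your self-consistency argument has two problems. First, it presupposes that $G_R(1,1)-G_r(1,1)$ is a pure power $(R-r)^{\alpha}$; even if the bootstrap forced $\alpha=1/2$, this would not rule out non-power-law behavior (logarithmic corrections, oscillation), so it cannot by itself deliver the asymptotic equivalence $\sim C\sqrt{R-r}$. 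Second, and more seriously, the step ``first-order perturbation of the Ruelle eigenvalue gives $1-e^{P(r)}\sim c_1(R-r)^{\alpha}$'' requires knowing that the H\"{o}lder norm of $\varphi_R-\varphi_r$ is comparable to $G_R(1,1)-G_r(1,1)$. The potential $\varphi_r$ is built from Martin-kernel ratios, and Theorem~\ref{theorem:holderMartinKernel} gives only continuity of $r\mapsto K_r(x,\cdot)$, with no rate; the modulus of continuity of $\varphi_r$ in $r$ is exactly as hard to control as the quantity you are trying to compute, so the loop you propose to close is circular at the one point where new input is needed.

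The paper closes the system differently, and this is the idea your proposal is missing: it differentiates $\eta(r)$ itself using Proposition~\ref{proposition:GPrime}, obtaining $d\eta/dr$ as (essentially) the triple sum $\sum_x\sum_y G_r(1,x)G_r(1,y)G_r(y,x)$, and then shows by an ergodic-averaging argument over the measures $\lambda_{r,m}$ (Proposition~\ref{proposition:ergodic2}, resting on the uniform Ancona inequalities and Corollary~\ref{corollary:ergodicCorollary}) that for $\lambda_{r,m}$-typical $x$ the inner sum over $y$ is asymptotically $\xi(R)\,\eta(r)\,|x|\,G_r(1,x)$. Summing over spheres yields the closed autonomous relation $d\eta/dr\sim C\eta(r)^{3}$, whose integration gives $\eta(r)^{-2}\sim C(R-r)$ with no a priori ansatz on the form of the singularity. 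If you want to repair your argument, replace the bootstrap on $\alpha$ with this second application of the differential equations; the transfer-operator machinery then enters only through the uniform ergodic theorem (Proposition~\ref{proposition:uniformErgodicity}), not through a rate of continuity of $\varphi_r$ in $r$.
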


The proof of Theorem~\ref{theorem:criticalExponent} is given in
section~\ref{sec:criticalExp}. Like the proof of
Theorem~\ref{theorem:2}, it uses the existence of an automatic
structure and the attendant thermodynamic formalism. It also relies
critically on the conclusion of Theorem~\ref{theorem:2}, which
determines the value of the key thermodynamic variable.

The behavior of the generating function $G_{r} (1,1)$ in the
neighborhood of the singularity $r=R$ is of interest because it
reflects the asymptotic behavior
of the coefficients $P^{1}\{X_{n}=1 \}$ as $n \rightarrow \infty$.
Abel's theorem for power series, together with
\eqref{eq:criticalExponent}, implies that if there are constants
$C,\alpha>0$ such that  
\begin{equation}\label{eq:llt}
	P^{1}\{X_{2n}=1 \}\sim \frac{C}{R^{2n}n^\alpha }
	\quad \text{as} \quad n \rightarrow \infty 
\end{equation}
then $\alpha =3/2$. Unfortunately, to deduce a limit theorem of the
type \eqref{eq:llt} (with $\alpha =3/2$) from
\eqref{eq:criticalExponent} one must verify an additional Tauberian
hypothesis of some sort. For instance, if it could be shown that the
relation \eqref{eq:criticalExponent} extends off the real axis to a
neighborhood of $z=R$ in the slit plane $\zz{C}\setminus [R,\infty)$
then a Tauberian theorem of Flajolet and Odlyzko
\cite{flajolet-odlyzko} together with \eqref{eq:criticalExponent}
would imply that the return probabilities satisfy \eqref{eq:llt} with
$\alpha =3/2$.  It seems likely that there is such an off-axis
extension, because the Green's function itself has such an analytic
continuation. (Recall that the Green's function is the kernel of the
resolvent operator $r^{-1} (r^{-1}I-\zz{P})$; since $\zz{P}$ is
Hermitian, its spectrum lies entirely on the real axis.) However, the
methods developed here to establish \eqref{eq:criticalExponent} will
use the positivity of the Green's function for real arguments $r\in
(0,R]$ is an essential way; a proof that \eqref{eq:criticalExponent}
has an analytic continuation will require new methods.

Local limit theorems of the form \eqref{eq:llt} have been established
for random walks on free groups \cite{gerl-woess}, \cite{lalley:frrw},
certain free products \cite{woess:freeProducts}, and certain virtually
free groups, including $SL_{2} (\zz{Z})$ \cite{lalley:rwrl},
\cite{nagnibeda-woess}. In all of these cases the Green's function is
an algebraic function of $r$, and so verification of the hypotheses of
the Flajolet-Odlyzko Tauberian theorem is trivial, given the behavior
\eqref{eq:criticalExponent} on the real axis. In all likelihood the
Green's function of simple random walk on a surface group is not
algebraic.

\subsection{Standing Conventions}\label{ssec:convention}
The values of constants $C$, $C_{x}$, and so on may change from line
to line. The symbol $R$ is reserved for the  radius of convergence of the
Green's function. The Green's function will be denoted by $G_{r} (x,y)$
throughout, but the symbol $G$ is also used with superscript $\Gamma$
to denote the Cayley graph of $\Gamma$. The symbol $\sim$ is used in
the conventional way, meaning that the ratio of the two sides
approaches $1$.

\section{Green's function: preliminaries}\label{sec:preliminaries}

Throughout this section, $X_{n}$ is a symmetric, nearest neighbor
random walk on a finitely generated, nonamenable group $\Gamma$ with
(symmetric) generating set $A$. 

\subsection{Green's function as a sum over paths}\label{ssec:GFs}

The Green's function $G_{r} (x,y)$ defined by \eqref{eq:green} has an
obvious interpretation as a sum over paths from $x$ to $y$. (Note:
Here and in the sequel a \emph{path} in $\Gamma$ is just the sequence
of vertices visited by a path in the Cayley graph $G^{\Gamma}$, that
is, a sequence of group elements such that any two successive elements
differ by right-multiplication by a generator $a\in A$.) Denote by
$\mathcal{R} (x,y)$ the set of all paths $\gamma $ from $x$ to $y$,
and for any such path $\gamma = (x_{0},x_{1},\dotsc ,x_{m})$ define
the \emph{weight}
\begin{equation}\label{eq:weight}
	w_{r} (\gamma):=r^{m}\prod_{i=0}^{m-1}p (x_{i},x_{i+1}).
\end{equation}
Then 
\begin{equation}\label{eq:greenByPath}
	G_{r} (x,y)=\sum_{\gamma \in \mathcal{R} (x,y)}w_{r} (\gamma).
\end{equation}
Since the step distribution $p (a)=p (a^{-1})$ is symmetric with
respect to inversion, so is the weight function $\gamma \mapsto w_{r}
(\gamma)$: if $\gamma^{R}$ is the reversal of the path $\gamma$, then
$w_{r} (\gamma^{R})=w_{r} (\gamma)$. Consequently, the Green's
function is symmetric in its arguments:
\begin{equation}\label{eq:GreenSymmetry}
	G_{r} (x,y)=G_{r} (y,x).
\end{equation}
 Also, the weight function is
multiplicative with respect to concatenation of paths, that is, $w_{r}
(\gamma \gamma ')=w_{r} (\gamma)w_{r} (\gamma ')$. Since the step
distribution $p (a)>0$ is strictly positive on the generating
set $A$, it follows that the Green's function satisfies a system of
\emph{Harnack inequalities}: There exists a constant $C<\infty$ such
that for each $0<r\leq R$ and all group elements $x,y,z$,
\begin{equation}\label{eq:harnack}
	G_{r} (x,z)\leq C^{d (y,z)}G_{r} (x,y).
\end{equation}

\begin{proof}[Proof of Proposition~\ref{proposition:GPrime}]
This is a routine calculation based on the representation
\eqref{eq:greenByPath} of the Green's function as a sum over paths.
Since all terms in the power series representation of the Green's
function have nonnegative coefficients, interchange of $d/dr$ and
$\sum_{\gamma}$ is permissible, so
\[
	\frac{d}{dr}G_{r} (x,y)=\sum_{\gamma \in \mathcal{R} (x,y)}
	\frac{d}{dr} w_{r} (\gamma).
\]
  If $\gamma$ is a path from $1$ to $x$
of length $m$, then the derivative with respect to $r$ of the weight
$w_{r} (\gamma)$ is $mw_{r} (\gamma)/r$, so $dw_{r} (\gamma)/dr$
contributes one term of size $w_{r} (\gamma)/r$ for each vertex
visited by $\gamma$ after its first step.  This, together with the
multiplicativity of $w_{r}$, yields the identity \eqref{eq:GPrime}.
\end{proof}

\subsection{First-passage generating functions}\label{ssesc:fpGF}
Other useful generating functions can be obtained by summing path
weights over different sets of paths. Two classes of such generating
functions that will be used below are the \emph{restricted Green's
functions} and the \emph{first-passage} generating functions (called
the \emph{balayage} by Ancona \cite{ancona}) defined as follows. Fix a
region $\Omega \subset G^{\Gamma}$ (or alternatively a set f vertices
$\Omega \subset \Gamma $), and for any two vertices $x,y\in
G^{\Gamma}$ let $\mathcal{P}(x,y;\Omega)$ be the set of all paths from
$x$ to $y$ that remain in the region $\Omega$ at all except the
initial and final points. Define
\begin{align}\label{eq:fpgf}
	G_{r} (x,y;\Omega)&=\sum_{\mathcal{P} (x,y;\Omega )}w_{r}
	(\gamma),\quad \text{and}\\
\notag 	F_{r} (x,y)&=G_{r} (x,y;\Gamma \setminus \{y \}). 
\end{align}
Thus, $F_{r} (x,y)$, the \emph{first-passage generating function}, is
the sum over all paths from $x$ to $y$ that first visit $y$ on the
last step. This generating function has the alternative representation
\begin{equation}\label{eq:fpExpForm}
	F_{r} (x,y)=E^{x}r^{\tau (y)}
\end{equation}
where $\tau (y)$ is the time of the first visit to $y$ by the random
walk $X_{n}$, and the expectation extends only over those sample paths
such that $\tau (y)<\infty$.  Note that the restricted Green's
functions $G_{r} (\cdot,\cdot ;\Omega)$ obey Harnack inequalities
similar to \eqref{eq:harnack}, but with the distance $d (y,z)$
replaced by the distance $d_{\Omega} (y,z)$ in the set $\Omega$.
Finally, since any visit to $y$ by a path started at $x$ must follow a
\emph{first} visit to $y$,
\begin{equation}\label{eq:GxFxG}
	G_{r} (x,y)=F_{r} (x,y)G_{r} (1,1).
\end{equation}
Therefore, since $G_{r}$ is symmetric in its arguments, so is $F_{r}$.

\subsection{Renewal equation for the Green's function}\label{ssec:re}
The representation \eqref{eq:green} suggests that $G_{r}(1,1)$ can be
interpreted as the expected ``discounted'' number of visits to the
root $1$ by the random walk, where the discount factor is $r$. Any
such visit must either occur at time $n=0$ or after the first step,
which must be to a generator $x\in A$. Conditioning on the first step
and using the Markov property, together with the symmetry $F_{r}
(1,x)=F_{r} (x,1)$, yields the \emph{renewal equation}
\[
	G_{r} (1,1)=1+\sum_{x\in A} p_{x} r F_{r} (1,x) G_{r} (1,1),
\]
which may be rewritten in the form
\begin{equation}\label{eq:re}
	G_{r} (1,1)=1\big/\left(1- \sum_{x\in A} p_{x}r F_{r} (1,x)
	\right).
\end{equation}
Since $G_{R} (1,1)<\infty$ (recall that the group $\Gamma$ is
nonamenable), it follows that 
\begin{equation}\label{eq:Frestriction}
	\sum_{x\in A} p_{x}R F_{r} (1,x) <1.
\end{equation}

\subsection{Retracing inequality}\label{ssec:retracing} The
first-passage generating function $F_{r} (1,x)$ is the sum of weights
of all paths that first reach $x$ at the last step. For $x\in A$ this
may occur in one of two ways: either the path jumps from $1$ to $x$ at
its first step, or it first jumps to some $y\not =x$ and then later
finds its way to $x$. The latter will occur if the path returns to the
root $1$ from $y$ without visiting $x$, and then finds its way from
$1$ to $x$. This leads to a simple bound for $F_{r} (1,x)$ in terms of
the \emph{avoidance generating function} $A_{r} (1;x)$, defined  by
\begin{equation}\label{eq:avoidance}
A_{r} (1;x):=\sum_{y\not =x} p_{y}R G_{r} (y,1;\Gamma \setminus \{x,1 \})
	      =G_{r} (1,1;\Gamma   \setminus\{x,1 \}).
\end{equation}
Thus, $A_{r} (1,x)$ is the sum of weights $w_{r} (\gamma)$ over all
paths $\gamma$ that begin and end at $1$, and avoid both $1$ and $x$
in transit.

\begin{lemma}\label{lemma:avoidance} 
The avoidance generating function satisfies $A_{R} (1;x)<1$  for
every $x\in A$, and for every $r\leq R$,
\begin{equation}\label{eq:retracing}
	F_{r} (1,x)\geq p_{x}R / ( 1-A_{r} (1;x)).
\end{equation}
For simple random walk on the surface group $\Gamma_{g}$, the
inequality is strict.
\end{lemma}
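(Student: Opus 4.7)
The plan is to establish the three parts separately, using only the renewal equation \eqref{eq:re}, multiplicativity of path weights under concatenation, and (for the strict inequality) connectedness of the Cayley graph minus the vertex $1$. For the first claim, every path contributing to $A_r(1;x)=G_r(1,1;\Gamma\setminus\{1,x\})$ is a path from $1$ to $1$ avoiding $1$ in its interior, and so also contributes (with the same weight) to the first-return generating function $F_r(1,1)=G_r(1,1;\Gamma\setminus\{1\})$; hence $A_r(1;x)\leq F_r(1,1)$ for every $r$. At $r=R$, the renewal equation together with $G_R(1,1)<\infty$ (Kesten's theorem) yields \eqref{eq:Frestriction}, so in particular $F_R(1,1)<1$, and therefore $A_R(1;x)<1$.

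For the retracing inequality, I would isolate a natural subclass of the first-passage paths summed by $F_r(1,x)$: for each $k\geq 0$, the paths $\gamma:1\to x$ that factor as a concatenation of $k$ ``avoidance excursions'' at $1$ (each contributing to $A_r(1;x)$) followed by a single edge $1\to x$. Any such $\gamma$ is automatically a first-passage path to $x$, since by definition the excursions never touch $x$. Multiplicativity of $w_r$ under concatenation and summation of a geometric series (convergent for $r\leq R$ by Part~1) give
\[
	F_r(1,x)\;\geq\;p_x r\sum_{k=0}^{\infty}A_r(1;x)^{k}\;=\;\frac{p_x r}{1-A_r(1;x)}.
\]
I read the $R$ on the right-hand side of \eqref{eq:retracing}, as well as in the first displayed equality defining $A_r(1;x)$, as typographical slips for $r$; the bound as written fails at $r<R$, while the argument above gives the clean inequality with $p_xr$.

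For the strict inequality in the case $\Gamma=\Gamma_g$, I would exhibit a single path contributing to $F_r(1,x)$ but not to the sub-sum above. Any first-passage path $\gamma:1\to x$ whose interior vertices avoid $1$ altogether does not factor through a nontrivial excursion at $1$, and such a $\gamma$ exists because for $g\geq 2$ the relator cycle $\prod_{i=1}^{g}a_ib_ia_i^{-1}b_i^{-1}$ shows that the Cayley graph with the vertex $1$ removed is still connected. Since $p>0$ on the entire generating set $A_g$, the weight $w_r(\gamma)$ is strictly positive, so the inequality is strict. The main obstacle, such as it is, is the bookkeeping in Part~2: one must verify that the special concatenations are genuinely distinct as paths, which follows from the unique factorization of any such path into excursions determined by its return times to $1$.
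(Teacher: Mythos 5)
Your proof is correct, and the heart of it --- the decomposition of a first-passage path to $x$ into a string of avoidance excursions at $1$ followed by a direct jump $1\to x$, summed as a geometric series --- is exactly the paper's argument for \eqref{eq:retracing}; your reading of the $R$ on the right-hand side as $r$ (the lemma is only ever invoked at $r=R$, e.g.\ in Proposition~\ref{proposition:gGreenAsymptotics}, where the two coincide) is the right one. The one place you genuinely diverge is the claim $A_R(1;x)<1$: the paper gets it \emph{from} the retracing bound itself, arguing that $A_R(1;x)\geq 1$ would make the geometric series, and hence $F_R(1,x)$, infinite, contradicting \eqref{eq:GxFxG} and \eqref{eq:finiteAtR}; you instead dominate $A_r(1;x)$ by the first-return generating function and invoke the renewal equation \eqref{eq:re} with $G_R(1,1)<\infty$. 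Your route is more self-contained (it does not lean on the inequality being proved) and is the cleaner of the two, at the mild cost of having to fix the convention that the length-zero path is excluded from $G_r(1,1;\Gamma\setminus\{1\})$ so that it really is the first-return series appearing in \eqref{eq:re} --- which you do implicitly and consistently. Your strictness argument via the relator cycle (giving an explicit first-passage path to a generator $x$ of length $4g-1$ whose interior avoids both $1$ and $x$) is a concrete instance of the paper's one-line justification that there exist contributing paths not ending in a jump from $1$ to $x$.
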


\begin{proof}
A path $\gamma$ that starts at the root $1$ can reach $x$ by jumping
directly from $1$ to $x$, on the first step, or by jumping from $1$ to
$x$ after an arbitrary number $n\geq 1$ of returns to $1$ without
first visiting $x$. Hence,
\[
	F_{r} (1,x)\geq p_{x}R \left\{ 1+\sum_{n=1}^{\infty} A_{r} (1;x)^{n}\right\}.
\]
The inequality is strict for random walk on the surface group because
in this case there are positive-probability paths from $1$ to $x$
that do not end in a jump from $1$ to $x$. Since $F_{R} (1,x)<\infty$,
by \eqref{eq:GxFxG} and \eqref{eq:finiteAtR}, it must be that $A_{R}
(1;x)<\infty$. 
\end{proof}

\subsection{Renewal inequality}\label{ssec:renewalIneq}
The avoidance generating functions can be used to reformulate the
renewal equation \eqref{eq:re} in a way that leads to a useful upper
bound for the Green's function. Recall that the  renewal equation was
obtained by splitting paths that return to the root $1$ at the time of
their \emph{first} return. Consider a path $\gamma$ starting at $1$
that first returns to $1$ only at its last step: such a path must
either avoid $x\in A$ altogether, or it must visit $x$ before the
first return to $1$, and then subsequently find its way back to
$1$. Thus,
for any generator $x\in A$,
\begin{align*}
	G_{r} (1,1)&=1+A_{r} (1;x)G_{r} (1,1)+
	 F_{r} (1,x;\Omega \setminus \{1 \})F_{r} (x,1)G_{r} (1,1)\\
	 &\leq 1+A_{r} (1;x)G_{r} (1,1)+F_{r} (x,1)^{2}G_{r} (1,1).
\end{align*}
Solving for $G_{r} (1,1)$ gives the following \emph{renewal inequality}:

\begin{equation}\label{eq:renewalIneq}
	G_{r} (1,1)\leq \{1-A_{r} (1;x)-F_{r} (1,x)^{2} \}^{-1}.
\end{equation}

\subsection{Backscattering}\label{ssec:backscattering}
A very simple argument shows that the Green's function $G_{R} (1,x)$
converges to $0$ as $|x| \rightarrow \infty$. Observe that if $\gamma$
is a path from $1$ to $x$, and $\gamma '$ a path from $x$ to $1$, then
the concatenation $\gamma \gamma '$ is a path from $1$ back to
$1$. Furthermore, since any path from $1$ to $x$ or back  must make at
least $|x|$ steps, the length of $\gamma \gamma '$ is at least
$2|x|$. Consequently, by symmetry,
\begin{equation}\label{eq:back-forth}
	F_{R} (1,x)^{2}G_{R} (1,1)\leq \sum_{n=2|x|}^{\infty}
	      P^{1}\{X_{n}=1 \}R^{n}
\end{equation}
Since $G_{R} (1,1)<\infty$, by nonamenability of the group $\Gamma$,
the tail-sum on the right side of inequality \eqref{eq:back-forth}
converges to $0$ as $|x| \rightarrow \infty$, and so $F_{R}
(1,x)\rightarrow 0$ as $|x| \rightarrow \infty$. Several variations on
this argument will be used later.

\subsection{Subadditivity and the random walk
metric}\label{ssec:superadditivity} The concatenation of a path from
$x$ to $y$ with a path from $y$ to $z$ is, obviously, a path from $x$
to $z$. Consequently, by the Markov property (or alternatively the
path representation \eqref{eq:greenByPath} and the multiplicativity of
the weight function $w_{r}$) the function $-\log F_{r} (x,y)$ is
\emph{subadditive}:

\begin{lemma}\label{lemma:superadditivity}
For each $r\leq R$ the first-passage generating functions  $F_{r}
(x,y)$  are 
super-- multiplicative, that is, for any group elements $x,y,z$,
\begin{equation}\label{eq:GreenSuperM}
	F_{r} (x,z)\geq F_{r} (x,y)F_{r} (y,z) .
\end{equation}
\end{lemma}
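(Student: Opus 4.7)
The plan is to reduce super-multiplicativity to the pointwise bound $F_{r}(y,z)\leq 1$, which in turn follows from reversibility of the walk. The naive strong Markov argument---concatenating a first-passage path from $x$ to $y$ with an independent copy from $y$ to $z$---does not by itself suffice, because when $r>1$ the concatenated time $\tau_{y}+\tau'_{z}$ can exceed $\tau_{z}$ and so \emph{inflate} the weight, sending the pathwise inequality in the wrong direction. Instead I would use the path representation \eqref{eq:greenByPath} together with the symmetry \eqref{eq:GreenSymmetry}. Assume throughout that $y\notin\{x,z\}$; the degenerate cases follow directly from $F_{r}\leq 1$. Decomposing every first-passage path from $x$ to $z$ according to whether it visits $y$ (and, in the affirmative case, splitting at the \emph{first} visit to $y$) gives a weight-preserving bijection with pairs $(\gamma_{1},\gamma_{2})$, where $\gamma_{1}$ is a first-passage path from $x$ to $y$ that in addition avoids $z$ (forced, since the whole path avoids $z$ until its last step), and $\gamma_{2}$ is a first-passage path from $y$ to $z$. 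Writing $H^{a}_{r}(u,v):=G_{r}(u,v;\Gamma\setminus\{v,a\})$ for the restricted generating function, multiplicativity of $w_{r}$ yields
\[
F_{r}(x,z) \;=\; H^{z}_{r}(x,y)\,F_{r}(y,z) \;+\; H^{y}_{r}(x,z).
\]

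The same decomposition applied to $F_{r}(x,y)$, splitting at the first visit to $z$ and invoking the symmetry $F_{r}(z,y)=F_{r}(y,z)$, produces
\[
F_{r}(x,y) \;=\; H^{z}_{r}(x,y) \;+\; H^{y}_{r}(x,z)\,F_{r}(y,z).
\]
Subtracting $F_{r}(y,z)$ times the second identity from the first, the cross terms cancel exactly and I am left with
\[
F_{r}(x,z) - F_{r}(x,y)\,F_{r}(y,z) \;=\; H^{y}_{r}(x,z)\,\bigl[\,1-F_{r}(y,z)^{2}\,\bigr].
\]
Since the first factor is manifestly nonnegative, the lemma reduces to proving $F_{r}(y,z)\leq 1$.

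For this last step I would appeal to reversibility: Cauchy--Schwarz on the $n$-step kernel gives $p_{n}(y,z)\leq\sqrt{p_{n}(y,y)\,p_{n}(z,z)}$, and translation invariance on $\Gamma$ forces both factors on the right to equal $p_{n}(1,1)$. Summing against $r^{n}$ yields $G_{r}(y,z)\leq G_{r}(1,1)<\infty$, finiteness for $r\leq R$ being Kesten's theorem \eqref{eq:finiteAtR}. Combined with $G_{r}(y,z)=F_{r}(y,z)G_{r}(z,z)$ from \eqref{eq:GxFxG} (and the diagonal formula $G_{r}(y,y)=1/(1-F_{r}(y,y))$ when $y=z$), this delivers $F_{r}(y,z)\leq 1$. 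The conceptual point worth flagging is that symmetry is doing essential work in the cancellation above: without it, one could not push super-multiplicativity past $r=1$ by path-decomposition methods alone.
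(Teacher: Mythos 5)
Your decomposition is correct, and it is genuinely different from---and more careful than---the argument the paper actually gives. The paper disposes of the lemma in one sentence (concatenation of paths plus the Markov property), which, as you rightly observe, only shows that the set of concatenations $\gamma_{1}\gamma_{2}$ injects into the set of \emph{all} paths from $x$ to $z$, not into the set of first-passage paths; for $r>1$ the naive argument therefore only yields $G_{r}(x,z)\geq F_{r}(x,y)F_{r}(y,z)$, which is weaker by a factor of $G_{r}(1,1)$. Your two exact identities and the resulting cancellation
\[
F_{r}(x,z)-F_{r}(x,y)F_{r}(y,z)=H^{y}_{r}(x,z)\bigl[1-F_{r}(y,z)F_{r}(z,y)\bigr]
\]
fill this gap. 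One remark: symmetry of the walk is not in fact essential to the cancellation. What is needed is only $F_{r}(y,z)F_{r}(z,y)\leq 1$, and this holds for any irreducible walk with $G_{R}(y,y)<\infty$, since $G_{R}(y,y)\geq\sum_{k\geq 0}\bigl(F_{R}(y,z)F_{R}(z,y)\bigr)^{k}$ (count the excursions that shuttle $k$ times from $y$ to $z$ and back). Symmetry merely lets you rewrite the product as $F_{r}(y,z)^{2}$.

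The one step that is wrong as written is your justification of $F_{r}(y,z)\leq 1$ via the pointwise bound $p_{n}(y,z)\leq\sqrt{p_{n}(y,y)\,p_{n}(z,z)}$. That inequality fails for odd $n$: every relator of $\Gamma_{g}$ has even length, so the Cayley graph is bipartite and $p_{n}(y,y)=0$ for odd $n$, while $p_{n}(y,z)>0$ whenever $n\geq d(y,z)$ and $n\equiv d(y,z)\pmod 2$. Cauchy--Schwarz is valid only at the level of the summed kernel: for $r\leq R$ the operator $(I-r\zz{P})^{-1}$ is positive on $\ell^{2}(\Gamma)$, so its kernel is positive-definite and $G_{r}(y,z)\leq\sqrt{G_{r}(y,y)G_{r}(z,z)}=G_{r}(1,1)$. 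Simpler still, the bound you need is exactly the one the paper proves for Corollary~\ref{corollary:RWMetric}: the path representation gives $G_{R}(y,y)\geq 1+F_{R}(y,z)^{2}+F_{R}(y,z)^{4}+\dotsb$, so \eqref{eq:finiteAtR} forces $F_{R}(y,z)<1$, and $F_{r}\leq F_{R}$ because the defining power series has nonnegative coefficients. With that substitution your proof is complete.
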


Together with Kingman's subadditive ergodic theorem, this implies that
the Green's function $G_{r} (1,x)$ must decay  at a fixed
exponential rate along suitably chosen trajectories. For instance, if
\begin{equation}\label{eq:markovProduct}
	 Y_{n}=\xi_{1}\xi_{2}\dotsb \xi_{n}
\end{equation}
where $\xi_{n}$ is an ergodic Markov chain on the alphabet $A$, or on
the set $A^{K}$ of words of length $K$, then Kingman's theorem implies
that 
\begin{equation}\label{eq:kingmanRW}
	\lim n^{-1}\log G_{r} (1,Y_{n}) =\alpha  \quad \text{a.s.}
\end{equation}
where $\alpha$ is a constant depending only on the the transition
probabilities of the underlying Markov chain. More generally, if
$\xi_{n}$ is a suitable ergodic stationary process, then
\eqref{eq:kingmanRW} will hold. 
Super-multiplicativity of the
Green's function also implies the following.
 
\begin{corollary}\label{corollary:RWMetric}
The function $d_{G} (x,y):=\log F_{R} (x,y)$ is a metric on $\Gamma$.
\end{corollary}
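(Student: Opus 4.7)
The plan is to verify the four metric axioms in turn; three of them follow immediately from results already in the paper, so only the non-degeneracy requires work. I read the definition as $d_G(x,y) := -\log F_R(x,y)$, since $F_R \le 1$ (established below) and the conclusion asserts a metric. Symmetry $d_G(x,y) = d_G(y,x)$ is immediate from the symmetry of $F_R$ noted after \eqref{eq:GxFxG}, itself a consequence of \eqref{eq:GreenSymmetry}. The triangle inequality is nothing but Lemma~\ref{lemma:superadditivity} in logarithmic form: taking $-\log$ of $F_R(x,z) \ge F_R(x,y) F_R(y,z)$ yields exactly $d_G(x,z) \le d_G(x,y) + d_G(y,z)$.

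The substantive step is non-degeneracy: $F_R(x,y) \le 1$, with equality iff $x = y$. For $x = y$, setting $y = x$ in \eqref{eq:GxFxG} gives $G_R(1,1) = F_R(x,x) G_R(1,1)$ by translation invariance, and since $G_R(1,1)<\infty$ this forces $F_R(x,x) = 1$. For strictness when $x \ne y$, I would invoke the backscattering inequality \eqref{eq:back-forth} of section~\ref{ssec:backscattering}. Translation invariance reduces the problem to $x = 1$, and for $y \ne 1$ that bound yields
\[
F_R(1,y)^2 \, G_R(1,1) \;\le\; \sum_{n=2|y|}^{\infty} P^1\{X_n=1\}\, R^n \;=\; G_R(1,1) - \sum_{n=0}^{2|y|-1} P^1\{X_n=1\}\, R^n \;\le\; G_R(1,1) - 1,
\]
since the omitted $n = 0$ term contributes exactly $P^1\{X_0 = 1\} R^0 = 1$. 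Hence $F_R(1,y)^2 \le 1 - 1/G_R(1,1) < 1$, which is the desired strict inequality.

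The only step calling for any real thought is the non-degeneracy, where one has to rule out the pseudo-metric collapsing on a pair of distinct points. Even this amounts to reading off the backscattering bound already derived in section~\ref{ssec:backscattering}, consistent with the result being classified as a corollary: the proof is essentially an assembly of pieces that are already on the table.
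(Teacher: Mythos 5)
Your proof is correct and follows the same outline as the paper's: symmetry from \eqref{eq:GreenSymmetry}, the triangle inequality from Lemma~\ref{lemma:superadditivity}, and non-degeneracy from the finiteness of the Green's function on the diagonal via path concatenation (and you are right that the intended definition is $-\log F_{R}$). The only difference is the final step: the paper bounds $G_{R}(x,x)$ below by the full geometric series $1+F_{R}(x,y)^{2}+F_{R}(x,y)^{4}+\dotsb$ of repeated round trips, whereas you use the single-round-trip backscattering bound \eqref{eq:back-forth} together with the omitted $n=0$ term; both yield $F_{R}(x,y)<1$, and yours additionally gives the explicit quantitative bound $F_{R}(x,y)^{2}\leq 1-1/G_{R}(1,1)$.
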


\begin{proof}
The triangle inequality is immediate from
Lemma~\ref{lemma:superadditivity}, and  symmetry
$d_{G}(x,y)=d_{G} (y,x)$  follows from the corresponding symmetry
property \eqref{eq:GreenSymmetry} of the Green's function. Thus, to
show that $d_{G}$ is a metric (and not merely a pseudo-metric) it
suffices to show that if $x\not =y$ then $F_{R} (x,y)<1$.  But this
follows from the fact \eqref{eq:finiteAtR} that the Green's function
is finite at the spectral radius, because the path representation
implies that
\[
	G_{R} (x,x)\geq 1+F_{R} (x,y)^{2} +F_{R} (x,y)^{4} +\dotsb .
\]
\end{proof}

Call  $d_{G}$  the \emph{Green
metric}.  The Harnack inequalities imply that 
$d_{G}$ is dominated by a constant multiple of the word metric
$d$. In general, there is no domination in the other
direction. However:

\begin{proposition}\label{proposition:qi}
If the Green's function decays exponentially in $d (x,y)$ (that is, if
inequality \eqref{eq:expDecay} holds for all $x,y\in \Gamma$), then the
Green metric $d_{G}$ and the word metric $d$ on ${\Gamma}$
are quasi-isometric, that is, there are constants
$0<C_{1}<C_{2}<\infty$ such that for all $x,y\in \Gamma $,
\begin{equation}\label{eq:qi}
	C_{1}d (x,y)\leq d_{G} (x,y)\leq C_{2}d (x,y).
\end{equation}
\end{proposition}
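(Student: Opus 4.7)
The proposition asserts that the Green metric $d_G(x,y) = -\log F_R(x,y)$ (corrected for the obvious sign convention so that $d_G \geq 0$) and the word metric $d$ are quasi-isometric, with purely multiplicative constants. I plan to prove the two inequalities separately, using elementary path-counting.

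\textbf{Upper bound $d_G(x,y) \leq C_2 d(x,y)$.} This half does not use the hypothesis. Let $\gamma$ be a geodesic path in the Cayley graph from $x$ to $y$, of length $m = d(x,y)$. Because a geodesic visits no vertex twice, $\gamma$ first reaches $y$ on its final step, so $\gamma \in \mathcal{P}(x,y;\Gamma \setminus \{y\})$ and therefore contributes its weight to $F_R(x,y)$ via \eqref{eq:fpgf}. With $p_{\min} := \min_{a \in A} p(a) > 0$, the weight bound $w_R(\gamma) \geq (p_{\min} R)^{d(x,y)}$ yields
\[
 F_R(x,y) \;\geq\; (p_{\min} R)^{d(x,y)}.
\]
Corollary~\ref{corollary:RWMetric} shows $F_R(x,y) < 1$ for every $x \neq y$, so applying the previous display to any pair with large $d(x,y)$ forces $p_{\min} R < 1$. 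Thus $C_2 := -\log(p_{\min} R) > 0$ satisfies $d_G(x,y) \leq C_2 \, d(x,y)$ for all $x,y$.

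\textbf{Lower bound $C_1 d(x,y) \leq d_G(x,y)$.} This is where the exponential decay hypothesis \eqref{eq:expDecay} enters. Combining \eqref{eq:GxFxG} with \eqref{eq:expDecay} gives $F_R(x,y) \leq (C/G_R(1,1)) \varrho^{d(x,y)}$, whence
\[
 d_G(x,y) \;\geq\; d(x,y)\log(1/\varrho) - D, \qquad D := \log\bigl(C/G_R(1,1)\bigr).
\]
Choose $N_0$ so large that $D \leq \tfrac{1}{2}\log(1/\varrho)\cdot N_0$; then for $d(x,y) \geq N_0$ the displayed inequality gives $d_G(x,y) \geq \tfrac{1}{2}\log(1/\varrho)\cdot d(x,y)$. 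For pairs with $1 \leq d(x,y) < N_0$, translation invariance $F_R(x,y) = F_R(1, x^{-1}y)$ means $F_R(x,y)$ ranges over the finite set $\{F_R(1,z) : 1 \leq d(1,z) < N_0\}$, each element of which is strictly less than $1$ by Corollary~\ref{corollary:RWMetric}. Hence $d_G(x,y)/d(x,y)$ admits a positive lower bound $c > 0$ on this finite range. Taking $C_1 := \min\bigl(c,\tfrac{1}{2}\log(1/\varrho)\bigr) > 0$ completes the proof.

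The argument is essentially routine; the only mild obstacle is the short-distance case in the lower bound, where the linear-in-$d$ lower bound derived from \eqref{eq:expDecay} is polluted by an additive constant. This is dispatched by noting that $\Gamma$-invariance of $F_R$ reduces the small-distance regime to finitely many values and then invoking the strict inequality $F_R < 1$ off the diagonal.
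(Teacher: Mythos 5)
Your proof is correct and follows the same route as the paper: the upper bound comes from the weight of a single geodesic path (equivalently, from the Harnack inequalities, which the paper invokes just before the proposition), and the lower bound comes from taking logarithms in \eqref{eq:expDecay}. The one place you go beyond the paper is worthwhile: the paper's one-line proof asserts $C_{1}=-\log \varrho$ works outright, silently absorbing the additive constant $\log C-\log G_{R}(1,1)$, whereas your split into the regime $d(x,y)\geq N_{0}$ and the finitely many translation classes with $1\leq d(x,y)<N_{0}$ (using $F_{R}<1$ off the diagonal) is what actually yields the purely multiplicative bound stated in \eqref{eq:qi}.
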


\begin{proof}
If inequality \eqref{eq:expDecay} holds for all $x,y\in \Gamma$, then
the first inequality in \eqref{eq:qi} will hold with  $C_{1}=-\log \varrho$.
\end{proof}

\begin{note}\label{remark:metrics}
Except in the simplest cases --- when the Cayley graph is a tree, as
for free groups and free products of cyclic groups --- the random walk
metric $d_{G}$ does \emph{not} extend from $\Gamma $ to a metric on
the full Cayley graph $G^{\Gamma}$. To see this, observe that if it
\emph{did} extend, then the resulting metric space
$(G^{\Gamma},d_{G})$ would be path-connected, and therefore
would have the Hopf-Rinow property: any two points would be connected
by a geodesic segment. But this would imply that for vertices $x,z\in
\Gamma$ such that $d (x,z)\geq 2$ there would be a $d_{G}-$ geodesic segment
from $x$ to $z$, and such a geodesic would necessarily pass through a
point $y$ such that $d (x,y)=1$. For any such triple $x,y,z\in \Gamma$
it would then necessarily be the case that 
\[
	F_{R} (x,z)=F_{R} (x,y)F_{R} (y,z).
\]
This is possible only when every random walk path from $x$ to $z$
must pass through $y$ --- in particular, when the Cayley graph of
$\Gamma$ is a tree.
\end{note}

\subsection{Green's function and branching random
walks}\label{ssec:brw} There is a simple interpretation of the Green's
function $G_{r} (x,y)$ in terms of the occupation statistics of
\emph{branching random walks}. A branching random walk is built using
a probability distribution $\mathcal{Q}=\{q_{k} \}_{k\geq 0}$ on the
nonnegative integers, called the \emph{offspring distribution},
together with the step distribution $\mathcal{P}:=\{ p(x,y)=p
(x^{-1}y)\}_{x,y\in \Gamma }$ of the 
underlying random walk, according to the following rules: At each time
$n\geq 0$, each particle fissions and then dies, creating a random
number of offspring with distribution $\mathcal{Q}$; the offspring
counts for different particles are mutually independent. Each
offspring particle then moves from the location of its parent by
making a random jump according to the step distribution $p (x,y)$; the
jumps are once again mutually independent. Consider the initial
condition which places a single particle at site $x\in \Gamma$, and
denote the corresponding probability measure on population evolutions
by $Q^{x}$.

\begin{proposition}\label{proposition:brw}
Under $Q^{x}$, the total number of particles in generation $n$ evolves
as a Galton-Watson process with offspring distribution
$\mathcal{Q}$. If the offspring distribution has mean $r\leq R$, then
under $Q^{x}$ the expected number of particles at location $y$ at time
$n$ is $r^{n}P^{x}\{X_{n}=y \}$, where under $P^{x}$ the process
$X_{n}$ is an ordinary random walk with step distribution
$\mathcal{P}$. Therefore, $G_{r} (x,y)$ is the mean total number of
particle visits to location $y$.
\end{proposition}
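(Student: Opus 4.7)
The plan is to verify the three assertions in order, each being essentially a direct consequence of the branching random walk's construction combined with the definition \eqref{eq:green} of the Green's function.

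For the first assertion, I would observe that the branching mechanism is independent of the motion mechanism: each particle, at each time step, produces a number of offspring distributed as $\mathcal{Q}$, independently of all other particles and of the spatial configuration. Let $Z_n$ denote the total population at time $n$. Then $Z_{n+1} = \sum_{i=1}^{Z_n} N_{n,i}$ where the $N_{n,i}$ are i.i.d.\ with distribution $\mathcal{Q}$ and independent of $Z_n$. This is precisely the defining recursion of a Galton--Watson process with offspring distribution $\mathcal{Q}$, starting from $Z_0 = 1$.

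For the second assertion, let $M_n(y) = E^{Q^x}[\#\{\text{particles at }y\text{ at time }n\}]$. The base case $n=0$ is immediate, since $M_0(y) = \mathbf{1}\{y = x\} = P^x\{X_0 = y\}$. For the inductive step, I would condition on the configuration at time $n$. Given a particle at $z$ at time $n$, the expected number of its generation-$(n{+}1)$ offspring landing at $y$ is $r\,p(z,y)$, by independence of the fission count and the displacements. Summing this contribution over all particles alive at time $n$, and using linearity of expectation together with the inductive hypothesis,
\begin{equation*}
        M_{n+1}(y) = \sum_{z\in\Gamma} M_n(z)\cdot r\, p(z,y) = r^{n+1}\sum_{z\in \Gamma} P^x\{X_n=z\}p(z,y) = r^{n+1}P^x\{X_{n+1}=y\}.
\end{equation*}

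The third assertion then follows by summing over generations: the mean total number of visits to $y$ is $\sum_{n=0}^{\infty} M_n(y) = \sum_{n=0}^{\infty} r^n P^x\{X_n=y\} = G_r(x,y)$, exactly the series \eqref{eq:green}. Monotone convergence justifies interchanging the sum over $n$ with the expectation. There is no real obstacle here; the one point requiring a modicum of care is the independence claim in the inductive step, where one must invoke that offspring counts and displacements across distinct particles at time $n$ are mutually independent, so that the expected generation-$(n{+}1)$ count at $y$ factors into a sum of single-particle contributions.
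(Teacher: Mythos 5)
Your proof is correct and follows exactly the route the paper indicates: the Galton--Watson recursion for the population size, induction on $n$ for the mean occupation numbers, and summation over generations via \eqref{eq:green}. The paper's own proof merely sketches these three steps, so your argument is simply a fleshed-out version of the same approach.
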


\begin{proof}
The first assertion follows easily from the definition of a
Galton-Watson process -- see \cite{athreya-ney} for the definition and
basic theory.  The second is easily proved by induction on $n$. The third
then follows from the formula \eqref{eq:green} for the Green's
function.
\end{proof}

There are similar interpretations of the restricted Green's function
$G_{r} (x,y;\Omega)$ and the first-passage generating function $F_{r}
(x,y)$. Suppose that particles of the branching random walk are
allowed to reproduce only in the region $\Omega$; then
$G_{r}(x,y;\Omega)$ is the mean number of particle visits to $y$ in
this modified branching random walk.

\section{{A priori} estimates for the Green's function}\label{sec:apriori}



\subsection{Symmetries of simple random walk on
$\Gamma_{g}$}\label{ssec:symmetries} 
Recall that the generating set $A_{g}$ of the surface group
$\Gamma_{g}$ consists of $2g$ letters $a_{i},b_{i}$ and their
inverses, which are subject to the relation $ \prod [a_{i},b_{i}] =1$.
This fundamental relation implies others, including 
\begin{gather}\label{eq:reversal}
	\prod_{i=0}^{g-1} [b_{g-i},a_{g-i}]=1 \qquad \text{and}\\
\label{eq:cycle}
	\prod_{i=k+1}^{g}[a_{i},b_{i}] \prod_{i=1}^{k} [a_{i}^{-1},b_{i}^{-1}]=1.
\end{gather}
Since each of these has the same form as the fundamental relation,
each leads to an automorphism of the group $\Gamma_{g}$: relation
\eqref{eq:reversal} implies that the bijection 
\begin{align*}
	a_{i}^{\pm 1}&\mapsto b_{g-i}^{\pm 1},\\
	b_{i}^{\pm 1}&\mapsto a_{g-i}^{\pm 1}
\end{align*}
extends to an automorphism, and similarly relation \eqref{eq:cycle}
implies that  the mapping
\begin{align*}
	a_{i}^{\pm 1} & \mapsto a_{i+1}^{\pm 1} \quad 1\leq i\leq g-1, \\
	b_{i}^{\pm 1} & \mapsto b_{i+1}^{\pm 1} \quad 1\leq i\leq g-1, \\
	a_{g}^{\pm 1}& \mapsto a_{1}^{\mp 1},\\
	b_{g}^{\pm 1}& \mapsto  b_{1}^{\mp 1}
\end{align*}
extends to an automorphism. Clearly, each of these automorphisms
preserves the uniform distribution on $A_{g}$, and so it must also fix each of the
generating functions $G_{r} (1,x)$, $ F_{r} (1,x)$, and $A_{r}(1;x)$.
This implies

\begin{corollary}\label{corollary:automorphisms}
For simple random walk on $\Gamma_{g}$,
\begin{align}\label{eq:autA}
		A_{r} (1;x)&=A_{r} (1;y):=A_{r} \quad \forall \; x,y\in A_{g},\\
\notag 	F_{r} (1,x)&=F_{r} (1,y):=F_{r} \quad \forall \; x,y\in A_{g}, \quad \text{and}\\
\notag 	G_{r} (1,x)&=G_{r} (1,y):=G_{r} \quad \forall \; x,y\in A_{g} .	
\end{align}
\end{corollary}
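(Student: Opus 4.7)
The plan is to carry out in detail what the paragraph preceding the corollary has already outlined informally: each displayed map extends to a genuine automorphism of $\Gamma_g$, any automorphism permuting $A_g$ fixes all three of the generating functions in question, and the group generated by the two displayed automorphisms acts transitively on $A_g$. The three points combine immediately to give \eqref{eq:autA}.

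First I would verify that the two maps really do extend to group automorphisms. Applying each to the defining relator $\prod_i [a_i,b_i]$ produces, respectively, the relator $\prod_i [b_{g-i},a_{g-i}]$ of \eqref{eq:reversal} and (after one cyclic rotation of the resulting word) the $k=1$ case of \eqref{eq:cycle}. Both are identities in $\Gamma_g$, so each map kills the relator and hence extends to an endomorphism; since each map visibly permutes the finite generating set $A_g$, each extends to an automorphism.

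Next I would record the general principle. If $\varphi \in \mathrm{Aut}(\Gamma_g)$ permutes $A_g$, then $\varphi$ preserves the uniform step distribution on $A_g$, and hence the weight $w_r(\gamma)$ of \eqref{eq:weight} is invariant under the induced map $(x_0,\ldots,x_m)\mapsto (\varphi(x_0),\ldots,\varphi(x_m))$ on paths. Because $\varphi(1)=1$, this induced map bijects the path-sets indexing $G_r(1,x)$, $F_r(1,x)$, and $A_r(1;x)$ onto the corresponding path-sets for $\varphi(x)$. The path-sum representations \eqref{eq:greenByPath}, \eqref{eq:fpgf}, and \eqref{eq:avoidance} then yield
\[
G_r(1,x) = G_r(1,\varphi(x)), \quad F_r(1,x) = F_r(1,\varphi(x)), \quad A_r(1;x) = A_r(1;\varphi(x)).
\]

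Finally I would verify transitivity. Iterating the cycle automorphism gives the orbit
\[
a_1 \mapsto a_2 \mapsto \cdots \mapsto a_g \mapsto a_1^{-1} \mapsto a_2^{-1} \mapsto \cdots \mapsto a_g^{-1} \mapsto a_1,
\]
so all $2g$ of the $a$-generators and their inverses lie in a single orbit; the same computation handles the $b$-generators. The reversal map swaps the $a$-orbit with the $b$-orbit, so the combined action is transitive on $A_g$, which gives \eqref{eq:autA}. There is no real obstacle here — once the two automorphisms of $\Gamma_g$ have been produced, the corollary is a bookkeeping consequence of the path-sum representations of §\ref{ssec:GFs} — and the only mildly nontrivial step is the orbit calculation, which is routine.
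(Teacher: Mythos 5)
Your proposal is correct and follows exactly the route the paper intends: the two displayed maps are automorphisms permuting $A_g$, any such automorphism preserves the path weights and hence the three generating functions, and the group they generate acts transitively on $A_g$. The paper states this argument in one sentence immediately before the corollary; you have merely filled in the (routine) verifications, including the orbit computation for transitivity that the paper leaves implicit.
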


Consequently, by the renewal equation, $G_{r}=1/ (1-rF_{r})$. Since
$G_{R}<\infty$, this implies that for the simple random walk on
$\Gamma_{g}$ the first-passage generating functions $F_{r} (1,x)$ for
$x\in A_{g}$ are bounded by $1/R$:
\begin{equation}\label{eq:crudeFBound}
		F_{r}\leq F_{R}< 1/R.
\end{equation}

\subsection{Large genus asymptotics for $G_{R}
(1,1)$}\label{ssec:largegAsym}
For simple random walk on $\Gamma_{g}$, the symmetry relations
\eqref{eq:autA} and the inequalities \eqref{eq:retracing},
\eqref{eq:renewalIneq}, and \eqref{eq:zuk} can be combined to give
upper bounds for the Green's function. Asymptotically, these take the
following form:

\begin{proposition}\label{proposition:gGreenAsymptotics}
$\lim_{g \rightarrow \infty}G_{R} (1,1)=2$.
\end{proposition}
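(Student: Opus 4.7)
The plan is to combine the three relations among $G_R$, $F_R := F_R(1,x)$, and $A_R := A_R(1;x)$ that hold for any $x\in A_g$ (these quantities are independent of $x\in A_g$ by Corollary~\ref{corollary:automorphisms}) and then invoke Zuk's bound $R_g > \sqrt{g}$ to pass to the limit. Set $u := R F_R \in [0,1)$. By the renewal equation, $G_R = 1/(1-u)$, so it suffices to show $u \to 1/2$ as $g\to\infty$.

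First I would extract two inequalities relating $A_R$ and $u$:
\begin{itemize}
\item[(i)] The retracing inequality \eqref{eq:retracing} (with $p_x = 1/(4g)$) gives $F_R \geq R/(4g(1-A_R))$, which rearranges to $A_R \leq 1 - R^2/(4gu)$.
\item[(ii)] Comparing the renewal equation $G_R = 1/(1-u)$ with the renewal inequality $G_R \leq 1/(1 - A_R - F_R^2)$ yields $A_R \geq u - u^2/R^2$.
\end{itemize}
Chaining (i) and (ii) gives
\begin{equation*}
u - \frac{u^2}{R^2} \;\leq\; 1 - \frac{R^2}{4g u},
\end{equation*}
which, using Zuk's inequality $R_g^2 > g$, implies
\begin{equation*}
\frac{1}{4u} + u \;\leq\; 1 + \frac{1}{g}.
\end{equation*}

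The function $f(u) = 1/(4u) + u$ is strictly convex on $(0,1)$ with unique minimum $f(1/2) = 1$. To convert the bound $f(u) \leq 1 + 1/g$ into $u \to 1/2$, I first need to rule out $u\to 0$. This follows immediately from retracing alone: $F_R \geq R/(4g) > 1/(4\sqrt{g})$, so $u = RF_R \geq R^2/(4g) > 1/4$. On the interval $u \geq 1/4$, multiplying $f(u) \leq 1 + 1/g$ through by $u$ gives $(u-1/2)^2 \leq u/g \leq 1/g$, so $|u - 1/2| \leq 1/\sqrt{g}$. Hence $u \to 1/2$, and consequently $G_R = 1/(1-u) \to 2$.

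There is no real obstacle: the entire argument is an elementary manipulation of the three inequalities already established, powered by the sharp spectral bound \eqref{eq:zuk}. The only delicate point is the a priori lower bound $u \geq 1/4$ needed to turn the convex inequality $f(u) \leq 1 + o(1)$ into convergence; this is supplied directly by the retracing inequality together with Zuk's bound, and in fact yields the quantitative rate $G_R(1,1) = 2 + O(g^{-1/2})$.
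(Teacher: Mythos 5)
Your proof is correct. The three ingredients are the same ones the paper uses for its upper bound --- the exact renewal equation $G_R=1/(1-RF_R)$, the retracing inequality, and the renewal inequality, all powered by Zuk's bound $R^2>g$ --- and your chain $u-u^2/R^2\leq A_R\leq 1-R^2/(4gu)$ is a valid rearrangement of the paper's chain $1/(4(1-A))<A+F^2$. Where you genuinely diverge is on the lower bound: the paper proves $\liminf_{g}G_R(1,1)\geq 2$ by a separate combinatorial argument (Proposition~\ref{proposition:catalan}, i.e.\ the covering random walk on the free group and the Catalan generating function), and only uses the algebraic manipulation for $\limsup\leq 2$. You instead extract \emph{both} bounds from the single inequality $(u-1/2)^2\leq u/g$, which squeezes $u=RF_R$ to $1/2$ from both sides and, via the exact identity $G_R=1/(1-u)$, yields the two-sided rate $G_R(1,1)=2+O(g^{-1/2})$. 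This is cleaner and quantitatively sharper for the purposes of this proposition; the only cost is that the paper still needs Proposition~\ref{proposition:catalan} elsewhere (e.g.\ in Propositions~\ref{proposition:uniformBoundsF} and \ref{proposition:vanishingF}), so nothing is saved globally. Two cosmetic remarks: the detour through $u\geq 1/4$ is unnecessary, since multiplying $f(u)\leq 1+1/g$ by $u>0$ and using $u<1$ (which follows from \eqref{eq:Frestriction} or \eqref{eq:crudeFBound}) already gives $(u-1/2)^2\leq u/g\leq 1/g$; and in step (ii) it is safer to quote the renewal inequality in the form $G_R(1-A_R-F_R^2)\leq 1$, which gives $1-A_R-F_R^2\leq 1/G_R=1-u$ without worrying about the sign of the denominator.
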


\begin{proof}
Proposition \ref{proposition:catalan} below implies that $\liminf \geq
2$, so it suffices to prove the reverse inequality $\limsup \leq 2$. For
notational convenience, write $G=G_{R} (1,1)$, $F=F_{R} (1,x)$, and
$A=A_{R} (1;x)$; by Corollary \ref{corollary:automorphisms}, the
latter two quantities do not depend on the generator $x$. As noted
above, the renewal equation implies that 
\[
	 RF= 1-1/G,
\]
and by \eqref{eq:crudeFBound} above,  $F<1/R$. By Zuk's
inequality \eqref{eq:zuk}, the spectral radius $R=R_{g}$ is at least
$\sqrt{g}$, so it follows that $F<1/\sqrt{g}$, which is asymptotically
negligible as the  genus $g \rightarrow \infty$.
On the other hand, the retracing inequality \eqref{eq:retracing},
together with Zuk's inequality, gives
\[
	1-1/G=RF> R^{2} / (4g (1-A))>1/ (4 (1-A)).
\]
This implies that $A<1$. In the other direction, the renewal
{inequality} \eqref{eq:renewalIneq} implies that
\[
	1/G \geq (1-A-F^{2}).
\]
Combining the last two inequalities yields
\[
	1/4 (1-A)<A+F^{2}
\]
Since $F \rightarrow 0$ as $g \rightarrow \infty$, it follows that
$\liminf_{g \rightarrow \infty}A\geq 1/2$. Finally, using once again
the renewal inequality \eqref{eq:renewalIneq} and the fact that $F$ is
asymptotically negligible as $g \rightarrow \infty$, 
\begin{equation}\label{eq:gUpperAsymptotics}
	\limsup_{g \rightarrow \infty} G_{R} (1,1)\leq 2.
\end{equation}
\end{proof}

\subsection{The covering random walk}\label{ssec:catalan} The simple
random walk $X_{n}$ on the surface group $\Gamma_{g}$ can be lifted in
an obvious way to a simple random walk $\tilde{X}_{n}$, called the
\emph{covering random walk}, on the free group $\mathcal{F}_{2g}$ on
$2g$ generators. Clearly, on the event $\tilde{X}_{2n}=1$ that the
lifted walk returns to the root at time $2n$, it must be the case that
the projection $X_{2n}=1$ in $\Gamma_{g}$. Thus, the return
probabilities for $X_{2n}$ are bounded below by those of
$\tilde{X}_{2n}$, and so the spectral radius $R$ is bounded above by
the spectral radius $\tilde{R}$ of the covering random walk.  The
return probabilities of the covering random walk are easy to
estimate. Each step of $\tilde{X}_{n}$ either increases or decreases
the distance from the root $1$ by $1$; the probability that the
distance increases is $(4g-1)/4g$, unless the walker is at the root,
in which case the probability that the distance increases is
$1$. Consequently, the probability that $\tilde{X_{2n}}=1$ can be
estimated from below by counting up/down paths of length $2n$ in the
nonnegative integers that begin and end at $0$. It is well known that
the number of such paths is the $n$th \emph{Catalan number}
$\kappa_{n}$. Thus,
\begin{equation}\label{eq:catalan}
	P^{1}\{X_{2n}=1 \} \geq P^{1}\{\tilde{X}_{2n}=1 \}
			\geq \kappa_{n} \left(\frac{4g-1}{4g}
			\right)^{n} \left(\frac{1}{4g} \right)^{n}.
\end{equation}
The generating function of the Catalan numbers is
\[
	\sum_{n=0}^{\infty} \kappa_{n}z^{n} = \frac{1-\sqrt{1-4z}}{2z}.
\]
The smallest positive singularity is at $z=1/4$, and the value of the
sum at this argument is $2$. Since the spectral radius satisfies
$R_{g}^{2}/4g \rightarrow 1/4$ as $g \rightarrow \infty$, by Zuk's
inequality and results of Kesten \cite{kesten2}, the inequality
\eqref{eq:catalan} has the following consequence.

\begin{proposition}\label{proposition:catalan}
For every $\varepsilon >0$ there exist $g ({\varepsilon})<\infty $ and
$m ({\varepsilon})<\infty$ such that if $g\geq g (\varepsilon)$ then
\begin{equation}\label{eq:uniformTailBound}
	\sum_{n=0}^{m (\varepsilon)} P^{1}\{X_{2n}=1 \}R^{2n}
	\geq 2-\varepsilon.
\end{equation}
\end{proposition}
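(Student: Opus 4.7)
The plan is to apply the Catalan lower bound \eqref{eq:catalan} and exploit continuity of partial sums of the Catalan generating function near its smallest singularity $z=1/4$. Define
\[
	z_g := \frac{(4g-1)R_g^{2}}{16g^{2}},
\]
so that \eqref{eq:catalan} becomes $P^{1}\{X_{2n}=1\}R^{2n}\geq \kappa_{n} z_g^{n}$. The first task is to confirm that $z_g\to 1/4$ as $g\to\infty$. Zuk's inequality $R_g>\sqrt{g}$ gives $z_g>(4g-1)/(16g)$, which tends to $1/4$. For the matching upper bound I would use the covering random walk from \ref{ssec:catalan}: since $P^{1}\{\tilde X_{2n}=1\}\leq P^{1}\{X_{2n}=1\}$, one has $R_g\leq \tilde R_g$, and Kesten's explicit value $\tilde R_g=2g/\sqrt{4g-1}$ for simple random walk on the $4g$-regular tree then forces $z_g\leq 1/4$. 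Hence $z_g\to 1/4$.

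With the convergence $z_g\to 1/4$ in hand, the remainder is a routine two-step $\varepsilon/2$ argument. Given $\varepsilon>0$, I would first use $\sum_{n=0}^{\infty} \kappa_{n}(1/4)^{n}=2$ to fix $m=m(\varepsilon)$ with $\sum_{n=0}^{m}\kappa_{n}(1/4)^{n}\geq 2-\varepsilon/2$. For this fixed $m$, the polynomial $z\mapsto \sum_{n=0}^{m}\kappa_{n}z^{n}$ is continuous, so there exists $g(\varepsilon)$ such that for all $g\geq g(\varepsilon)$,
\[
	\sum_{n=0}^{m}\kappa_{n} z_g^{n} \geq \sum_{n=0}^{m}\kappa_{n}(1/4)^{n}-\varepsilon/2 \geq 2-\varepsilon.
\]
Combining with the pointwise bound $P^{1}\{X_{2n}=1\}R^{2n}\geq \kappa_{n}z_g^{n}$ then yields \eqref{eq:uniformTailBound}.

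No step poses a genuine obstacle: the two deeper inputs (Zuk's inequality and Kesten's computation of the spectral radius on the regular tree) are available by citation, and everything else is bookkeeping. The only subtlety worth flagging is the order of quantifiers: the integer $m$ must be selected from the target series $\sum \kappa_{n}(1/4)^{n}$ \emph{before} $g(\varepsilon)$ is chosen, so that $m$ depends only on $\varepsilon$ and not on the genus. The two-step $\varepsilon/2$ approximation above arranges this automatically.
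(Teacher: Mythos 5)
Your proof is correct and takes essentially the same route as the paper: the paper presents Proposition~\ref{proposition:catalan} as a direct consequence of the Catalan lower bound \eqref{eq:catalan}, the evaluation $\sum_{n}\kappa_{n}4^{-n}=2$, and the convergence $R_{g}^{2}/4g\to 1/4$ (Zuk's inequality from below, the covering-walk spectral radius $\tilde{R}_{g}^{2}=4g^{2}/(4g-1)$ from above), leaving the quantifier bookkeeping implicit. Your two-step $\varepsilon/2$ argument simply makes that bookkeeping explicit, with the quantifiers in the right order.
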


The Green's function and first-passage generating functions for the
covering random walk can be exhibited in closed form, using the
renewal equation and a retracing identity. (See \cite{woess:book} for
much more general results.) The key is that the Cayley
graph of the free group is the infinite homogeneous tree $\zz{T}_{4g}$
of degree $4g$. Since there are no cycles, for any two distinct
vertices $x,y\in \Gamma_{g}$ there is only one self-avoiding path (and
hence only one geodesic segment) from
$x$ to $y$; therefore, if $x=x_{1}x_{2}\dotsb x_{m}$ is the word
representation of $x$ then
\begin{equation}\label{eq:FreeFProduct}
	\tilde{F}_{r} (1,x)=\prod_{i=1}^{m}\tilde{F}_{r} (1,x_{i}) =\tilde{F}_{r}^{|x|},
\end{equation}
the last because symmetry forces $\tilde{F}_{r} (1,y)=\tilde{F}_{r}$
to have a common value for all generators $y$. Now fix a generator
$x\in A_{g}$, and consider the first-passage generating function
$\tilde{F}_{r} (1,x)=\tilde{F}_{r}$: Since $\zz{T}_{4g}$ has no
cycles, any random walk path from $1$ to $x$ must either jump directly
from $1$ to $x$, or must first jump to a generator $y\not =x$, then
return to $1$, and then eventually find its way to $x$. Consequently,
with $p=p_{g}=1-q=1/4g$,
\[
	\tilde{F}_{r}=pr +qr\tilde{F}_{r}^{2} ,
\]
from which it follows that 
\begin{align}\label{eq:coveringF}
	\tilde{F}_{r}&=\frac{1-\sqrt{1-4pqr^{2}}}{2qr},\\
\notag 
	\tilde{G}_{r}&=\frac{2q}{2q-1-\sqrt{1-4pqr^{2}}},  \quad \text{and}\\
\notag 
	\tilde{R}^{2}&=\frac{1}{4pq}=\frac{4g^{2}}{4g-1}.
\end{align}

\subsection{Uniform bounds on the Green's
function}\label{ssec:uniformBoundsGreen} 
Recall from section \ref{ssec:backscattering} that the first-passage
generating function $F_{R} (1,x)$ is bounded by the tail-sums of the
Green's function $G_{R} (1,1)$: in particular,
\begin{equation}\label{eq:Fbound}
	F_{R} (1,x)^{2}\leq F_{R} (1,x)^{2}G_{R} (1,1)
	\leq \sum_{n=2 |x|}^{\infty} P^{1}\{X_{n}=1 \}R^{n}.
\end{equation}
 Propositions
\ref{proposition:gGreenAsymptotics}--\ref{proposition:catalan} imply
that, for large genus $g$, these tail-sums can be made uniformly small by
taking $|x|$ sufficiently large. In fact,  the first-passage
generating functions can be bounded away from $1$ uniformly in $x\in
\Gamma_{g}\setminus \{1 \}$ provided the genus is sufficiently large:

\begin{proposition}\label{proposition:uniformBoundsF}
For any $\alpha >3/4$ 
there exists $g_{\alpha }<\infty$ so that
\begin{equation}\label{eq:uniformBoundsF}
		\sup_{g\geq g_{\alpha }} \sup_{x\not =1}
		F_{R} (1,x) <\sqrt{\alpha }.
\end{equation}
\end{proposition}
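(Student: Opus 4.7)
The plan is to partition the elements $x\ne 1$ into two ranges based on the distance $|x|$ and handle each separately using the tools assembled earlier in this section. The upshot is that the backscattering inequality already squeezes $F_R(1,x)^2$ against an explicit quantity whose large-genus limit is exactly $3/4$, so any $\alpha$ strictly above that threshold is achievable.

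For the shell $|x|=1$, Corollary~\ref{corollary:automorphisms} collapses $F_R(1,x)$ to the common value $F_R$, and the crude bound \eqref{eq:crudeFBound} together with Zuk's inequality \eqref{eq:zuk} immediately gives $F_R<1/R<1/\sqrt{g}$, which is negligible for large $g$. For $|x|\ge 2$, the backscattering inequality \eqref{eq:Fbound} yields
$$F_R(1,x)^2 \;\le\; \sum_{n=2|x|}^\infty P^1\{X_n=1\}R^n \;\le\; \sum_{n=4}^\infty P^1\{X_n=1\}R^n \;\le\; G_R(1,1) - 1 - \frac{R^2}{4g},$$
after discarding the nonnegative $n=3$ contribution and using the elementary identity $P^1\{X_2=1\}=1/(4g)$. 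The right-hand side depends only on $g$, so the bound is uniform over all $x$ with $|x|\ge 2$.

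By Proposition~\ref{proposition:gGreenAsymptotics} the term $G_R(1,1)$ tends to $2$, while Zuk's lower bound $R^2>g$ paired with the covering-walk upper bound $R^2\le\tilde R^2=4g^2/(4g-1)$ from \eqref{eq:coveringF} sandwiches $R^2/(4g)$ between $1/4$ and $g/(4g-1)$, forcing $R^2/(4g)\to 1/4$. Consequently the right-hand side above converges to $2-1-\tfrac14=\tfrac34$ as $g\to\infty$.

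Combining the two cases: for any $\alpha>3/4$ one selects $g_\alpha$ large enough that both $1/g<\alpha$ and $G_R(1,1)-1-R^2/(4g)<\alpha$ hold for every $g\ge g_\alpha$, which yields $F_R(1,x)^2<\alpha$ uniformly over $g\ge g_\alpha$ and $x\ne 1$, as required. There is no real obstacle here; the argument is essentially a consolidation of the large-genus asymptotics from section~\ref{ssec:largegAsym}, the covering-walk computation from section~\ref{ssec:catalan}, and the backscattering inequality from section~\ref{ssec:backscattering}. The only mildly delicate point is the two-sided pinching that forces $R^2/(4g)$ to the limit $1/4$, which is what ultimately produces the $3/4$ threshold in the statement and shows why a smaller $\alpha$ cannot be reached by this method.
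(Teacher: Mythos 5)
Your proof is correct and follows essentially the same route as the paper's: bound the $|x|\geq 2$ case via the backscattering inequality \eqref{eq:Fbound} and the tail sum $\sum_{n\geq 3}P^{1}\{X_{n}=1\}R^{n}=G_{R}(1,1)-1-P^{1}\{X_{2}=1\}R^{2}\to 2-1-\tfrac14=\tfrac34$, and handle $|x|=1$ separately via the symmetry relations and $F_{R}<1/R$. The only cosmetic difference is that you use the exact identity $P^{1}\{X_{2}=1\}=1/(4g)$ and pin $R^{2}/(4g)\to 1/4$ with the explicit covering-walk upper bound, where the paper invokes the Catalan lower bound \eqref{eq:catalan} and cites Zuk and Kesten for the same limit.
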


\begin{proof}
By Proposition \ref{proposition:gGreenAsymptotics}, $G_{R} (1,1)$ is
close to $2$ for large genus $g$. On the other hand, by inequality
\eqref{eq:catalan}, $P^{1}\{X_{2}=1 \}R^{2} \geq (1-1/4g) (R^{2}/4g)$,
and by taking $g$ large this can be made arbitrarily close to
$1/4$. Hence, by taking $g\geq g_{*}$ with $g_{*}$ large,
\[
	\sum_{n=3}^{\infty} P^{1}\{X_{n}=1 \}R^{n}\leq \alpha 
\]
where $\alpha$ can be taken arbitrarily close to $3/4$ by letting
$g_{*} \rightarrow \infty$.   Inequality
\eqref{eq:Fbound} now implies that $F_{R} (1,x)^{2}\leq \alpha$ for
all $|x|\geq 2$ and all $g\geq g_{*}$. But for $|x|=1$, the symmetry
relations of Corollary~\ref{corollary:automorphisms} and the renewal
equation \eqref{eq:re} imply that $F_{R} (1,x)<1/R$, which tends to
$0$ as $g \rightarrow  \infty$. 
\end{proof}

A more sophisticated version of this argument shows

\begin{proposition}\label{proposition:vanishingF}
\begin{equation}\label{eq:vanishingF}
	\lim_{g \rightarrow \infty} \sup_{x\not =1} F_{R} (1,x)=0.
\end{equation}
\end{proposition}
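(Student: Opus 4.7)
The plan is to exploit the exact identity
\[
F_R(1,x)^2 = 1 - \frac{1}{G_R(1,1)\bigl(1 - A_R(1;x)\bigr)},
\]
which follows from decomposing first-return loops at $1$ according to whether they visit $x$: those visiting $x$ contribute $F_R(1,x) G_R(x,1) = F_R(1,x)^2 G_R(1,1)$ by symmetry of the Green's function, while those avoiding $x$ contribute $G_R(1,1;\Gamma \setminus \{x\}) = 1/(1-A_R(1;x))$. Since Proposition~\ref{proposition:gGreenAsymptotics} gives $G_R(1,1) \to 2$, this reduces the problem to showing that $A_R(1;x) \to 1/2$ \emph{uniformly} in $x \neq 1$ as $g \to \infty$.

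The upper bound $A_R(1;x) \leq U_R := 1 - 1/G_R(1,1) \to 1/2$ is immediate from the decomposition $U_R = A_R(1;x) + W(x)$ with $W(x) := F_R(1,x)^2(1 - A_R(1;x)) \geq 0$ counting first-return loops that visit $x$; so the remaining task is to obtain a matching lower bound, equivalently, to show $W(x) \to 0$ uniformly in $x$. For $|x|=1$, the argument of Proposition~\ref{proposition:uniformBoundsF} (via Zuk's inequality and the renewal equation) delivers $F_R \leq 1/\sqrt{g}$, so $W \to 0$ trivially.

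For $|x| \geq 2$, I would split into two further regimes. When $|x| \geq K$ for sufficiently large $K$, the backscattering inequality $F_R(1,x)^2 G_R(1,1) \leq \sum_{n \geq 2|x|} P^1\{X_n=1\} R^n$, combined with Propositions~\ref{proposition:gGreenAsymptotics}--\ref{proposition:catalan} (which together show that the Catalan tail $\sum_{n\geq N}\kappa_n/4^n\to 0$ exhausts the limit $G_R \to 2$), makes the tail uniformly small. For the finite intermediate range $2 \leq |x| < K$, I would compare with the covering simple random walk on $\mathcal{F}_{2g}$: since $R \leq \tilde R$, $G_R(1,x) = \sum_{\tilde x \in \pi^{-1}(x)} \tilde G_R(1,\tilde x) \leq \tilde G_{\tilde R}(1,1) \sum_{\tilde x \in \pi^{-1}(x)} \tilde F^{|\tilde x|}$, where $\tilde F = 1/(2q\tilde R) \sim 1/(2\sqrt g) \to 0$ by the explicit tree formulas of section~\ref{ssec:catalan}. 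The shortest lift of $x$ has length $|x|$ and yields the leading contribution $\tilde F^{|x|} \lesssim g^{-|x|/2}$, while every other lift is obtained by multiplying by a nontrivial kernel element of length $\geq 4g$ (the relator length), carrying the doubly exponentially small factor $\tilde F^{4g - |x|}$. Dividing by $G_R(1,1) \to 2$ gives $F_R(1,x) \lesssim g^{-|x|/2} \to 0$ uniformly over this finite range.

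The main obstacle will be the covering-walk step: although each nontrivial kernel element of $\ker(\mathcal F_{2g} \twoheadrightarrow \Gamma_g)$ has length $\geq 4g$, the kernel is infinitely generated and has exponential growth in $\mathcal F_{2g}$, so bounding the cumulative sum $\sum_{k \neq 1} \tilde F^{|k \tilde x_0|}$ uniformly in $x$ requires a careful length-scale accounting exploiting the fact that $\tilde F^{4g} = O(g^{-2g})$ decays faster than the kernel's growth rate. An alternative route, bypassing the covering walk, would be to iterate the surface-group recursion $F_R(1,x) = (R/4g) \sum_{y \in A_g} F_R(1, y^{-1}x)$ from the base case $|x|=1$, but the analogous summation bottleneck reappears in controlling the $4g-1$ forward-stepping terms.
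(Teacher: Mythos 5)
Your overall skeleton --- treat $|x|=1$ via $F_R\le 1/R\le 1/\sqrt g$, treat large $|x|$ via the backscattering tail bound together with Propositions~\ref{proposition:gGreenAsymptotics}--\ref{proposition:catalan}, and treat a bounded intermediate range via the covering walk on $\mathcal F_{2g}$ --- is the same as the paper's, and the first two regimes are handled correctly. (The opening identity $F_R(1,x)^2=1-\bigl(G_R(1,1)(1-A_R(1;x))\bigr)^{-1}$ is valid, but the reduction to ``$A_R(1;x)\to 1/2$ uniformly'' is circular packaging: since $1-A_R(1;x)\ge 1/G_R(1,1)$ is bounded below, $W(x)\to 0$ uniformly is \emph{equivalent} to $F_R(1,x)\to 0$ uniformly, so all the content sits in the regime-by-regime bounds.)

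The genuine gap is the intermediate regime, and it is worse than the ``careful accounting'' you defer. Your displayed bound $G_R(1,x)\le \tilde G_{\tilde R}(1,1)\sum_{\tilde x\in\pi^{-1}(x)}\tilde F_{\tilde R}^{|\tilde x|}$ has a divergent right-hand side: from \eqref{eq:coveringF} one computes $\tilde F_{\tilde R}=1/(2q\tilde R)=1/\sqrt{4g-1}$ exactly, while the growth rate $\alpha$ of the kernel $N=\ker(\mathcal F_{2g}\twoheadrightarrow\Gamma_g)$ inside $\mathcal F_{2g}$ satisfies $\alpha>\sqrt{4g-1}$ (by the Grigorchuk cogrowth formula, because $R<\tilde R$ strictly for a proper quotient with nontrivial kernel). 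So $\sum_{k\in N}\tilde F_{\tilde R}^{|k\tilde x_0|}=\infty$: the lift-by-lift sum sits on the wrong side of the cogrowth threshold, and no length-scale accounting of the form ``$\tilde F^{4g}$ beats the kernel's growth'' can rescue it. The paper avoids this entirely by splitting the \emph{paths} from $1$ to $x$ by length rather than summing over endpoints in the fiber: a path of length $<2g$ contains no nontrivial cycle (the relator has length $4g$), hence lifts to a path in $\mathcal F_{2g}$ ending at the \emph{unique} lift $\tilde x$ with $|\tilde x|=|x|$, so the total weight of all such paths is at most $\tilde F_{\tilde R}(1,\tilde x)=\tilde F_{\tilde R}^{|x|}\le 1/\sqrt{4g-1}\to 0$, with no sum over the kernel at all; and the paths of length $\ge 2g$ concatenate with their reversals into loops of length $\ge 4g$ and are absorbed into the same tail bound $\sum_{n\ge 4g}P^1\{X_n=1\}R^n\to 0$ you already use for large $|x|$. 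Replacing your fiber sum by this path-length dichotomy closes the gap (and, as a bonus, handles all $|x|<2g$ at once, so the separate cutoff $K$ becomes unnecessary).
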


\begin{proof}
First, consider a vertex $x\in \Gamma_{g}$ at distance 
$\geq 2g$ from the root $1$: By inequality \eqref{eq:Fbound}, $F_{R}
(1,x)^{2}$  is bounded by the tail-sum $\sum_{n\geq 4g} P\{X_{n}=1
\}R^{n}$, which by  Propositions \ref{proposition:gGreenAsymptotics} 
--  \ref{proposition:catalan} converges to zero as $g
\rightarrow \infty$. Thus, it remains only to show that $F_{R}
(1;x)\rightarrow 0$ as $g \rightarrow \infty$ uniformly for vertices
$x$ at distance $< 2g$ from the root. 

Fix a vertex $x\in \Gamma_{g}$ such that $|x|< 2g$, and consider a
path $\gamma$ from $1$ to $x$. If $\gamma $ is of length $< 2g$
then it has no nontrivial cycles, because the  fundamental relation
\eqref{eq:surfaceRelations}  has length $4g$. Consequently, it lifts
to a path $\tilde{\gamma}$ in the free group $\mathcal{F}_{2g}$ from
$1$ to the unique  covering point $\tilde{x}$ of $x$ at distance $<2g$
from the group identity $1$ in $\mathcal{F}_{2g}$. Since $R\leq
\tilde{R}$, it follows that 
\[
	\sum_{\gamma :|\gamma |< 2g} w_{R} (\gamma)\leq \tilde{F}_{{R}} (1,\tilde{x})
	\leq \tilde{F}_{\tilde{R}} (1,\tilde{x})=\tilde{F}_{\tilde{R}}^{|x|}
\]
where the sum is over all paths from $1$ to $x$ of length $< 2g$. By
\eqref{eq:coveringF}, this converges to $0$ as $g \rightarrow \infty$.
On the other hand, since the concatenation of a path $\gamma$ from $1$
to $x$ of length $\geq 2g$ with a path $\gamma '$ of length $\geq 2g$
from $x$ to $1$ is a path from $1$ to $1$ of length $\geq 4g$,
\[
	\left(\sum_{\gamma :|\gamma |\geq  2g} w_{R} (\gamma)
	\right)^{2}
	\leq 
	\sum_{n=4g}^{\infty} P^{1}\{X_{n}=1 \}R^{n};
\]
here the sum is over all paths from $1$ to $x$ of length $\geq 2g$.
By Propositions
\ref{proposition:catalan}--\ref{proposition:uniformBoundsF}, the
tail-sum on the right side converges to zero as $g \rightarrow
\infty$.
\end{proof}

\section{The Ancona Inequalities}\label{sec:walkabout} From the
viewpoint of a random walker, the hyperbolic plane is a vast outback;
failure to follow, or nearly follow, the geodesic path from one point
to another necessitates walkabouts whose extents grow
\emph{exponentially} with the deviation from the geodesic path. It is
this that accounts for Ancona's inequality. The arguments of this
section make this precise.

\subsection{Free subgroups and embedded
trees}\label{ssec:embeddedTrees}
Recall that the surface group $\Gamma_{g}$ in its standard presentation has $2g$
generators which, together with their inverses, satisfy the relation
\eqref{eq:surfaceRelations}.  Denote by $\mathcal{F}_{A}^{+}$ and
$\mathcal{F}_{A}^{-}$ the sub-semigroups of $\Gamma_{g}$ generated by
$\{a_{i} \}_{i\leq g}$ and $\{a_{i}^{-1} \}_{i\leq g}$, respectively,
and define $\mathcal{F}_{B}^{\pm}$ similarly. 

\begin{proposition}\label{proposition:embeddedTrees}
The image of each of the semigroups $\mathcal{F}_{A}^{\pm}$ and
$\mathcal{F}_{B}^{\pm}$ in the Cayley graph is a rooted tree of
outdegree $g$. Every self-avoiding path in (the image of) any one of
these semigroups is a geodesic in the Cayley graph.
\end{proposition}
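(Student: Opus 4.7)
The plan is to apply Dehn's algorithm for surface groups of genus $g\geq 2$, which states that a word in the standard generators represents the identity in $\Gamma_g$ iff it can be trivialized by free reductions together with Dehn substitutions, where a Dehn substitution replaces a subword of length $>2g$ occurring in some cyclic conjugate of $R^{\pm 1} := \left(\prod_{i=1}^{g}[a_i,b_i]\right)^{\pm 1}$ by the (shorter) inverse of its complementary subword. A standard consequence (Greendlinger's lemma) is that any word that admits no such Dehn reduction has minimal length among all words representing the same group element, i.e.\ is a geodesic word in the Cayley graph $G^{\Gamma}$.

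The crucial observation is that in the relator $R=a_1b_1a_1^{-1}b_1^{-1}\dotsb a_gb_ga_g^{-1}b_g^{-1}$, and hence in every cyclic conjugate of $R^{\pm 1}$, $a$-letters and $b$-letters strictly alternate. Consequently no subword of any cyclic conjugate of $R^{\pm 1}$ contains two consecutive letters drawn from $\{a_i^{\pm 1}\}_{i\leq g}$, so no word in the alphabet $\{a_i^{\pm 1}\}_{i\leq g}$ can contain a Dehn-substitutable subword of length $\geq 2$, a fortiori of length $>2g$.

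From this the tree property is immediate: a nontrivial freely reduced word in the positive generators $\{a_i\}$ is Dehn-irreducible, hence (by Dehn's algorithm) does not equal $1$ in $\Gamma_g$. Thus $\mathcal{F}_A^{+}$ injects into $\Gamma_g$ as the free monoid on $\{a_1,\dotsc,a_g\}$, and its image in $G^{\Gamma}$ is a rooted $g$-ary tree with root $1$ and children of $w$ given by $wa_1,\dotsc,wa_g$. For the geodesic claim, a self-avoiding path in this tree from $u=u_1\dotsb u_k$ to $v=v_1\dotsb v_m$ (positive $a$-words with longest common prefix of length $\ell$) corresponds to the word $w=u_k^{-1}u_{k-1}^{-1}\dotsb u_{\ell+1}^{-1}v_{\ell+1}\dotsb v_m$ of length $k+m-2\ell$. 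This word is freely reduced, since $u_{\ell+1}\neq v_{\ell+1}$ at the junction and the two halves consist respectively of purely negative and purely positive $a$-letters; by the alternation observation, it is also Dehn-irreducible. Greendlinger's lemma then gives that $w$ is a geodesic word in $G^{\Gamma}$. The identical argument applies to $\mathcal{F}_A^{-}$ and $\mathcal{F}_B^{\pm}$, with $a$ replaced by $a^{-1}$ or by $b^{\pm 1}$ throughout.

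The only potential obstacle is invoking Dehn's algorithm/Greendlinger's lemma for $\Gamma_g$; this is classical (see Lyndon–Schupp, Ch.~V) and is valid for all $g\geq 2$, hence in particular under the paper's large-genus hypothesis.
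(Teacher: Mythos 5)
Your overall strategy --- Dehn's algorithm plus the observation that $a$-letters and $b$-letters alternate in every cyclic conjugate of $R^{\pm1}$, so that a word in the alphabet $\{a_i^{\pm1}\}$ meets any relator conjugate in at most one letter --- is exactly the paper's, and the injectivity/tree part of your argument is sound. The gap is the step where you invoke, as a ``standard consequence of Greendlinger's lemma,'' the implication \emph{Dehn-irreducible $\Rightarrow$ geodesic}. That implication is false, and it does not follow from Greendlinger: the lemma produces relator subwords of length $>(1-3\lambda)|R|=4g-3$ in the reduced form of $wv^{-1}$ (where $v$ is a hypothetical shorter representative), and such a subword may straddle the junction between $w$ and $v^{-1}$, contributing up to $2g$ letters to each side --- which is compatible with $w$ being Dehn-irreducible and $v$ being geodesic. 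A concrete counterexample in genus $2$ (write $a=a_1,b=b_1,c=a_2,d=b_2$, so $R=aba^{-1}b^{-1}cdc^{-1}d^{-1}$): the word
\[
	w=b\,a^{-1}\,b^{-1}\,c\,a\,b\,a^{-1}\,b^{-1}
\]
is freely reduced and Dehn-irreducible --- every length-$5$ subword of $w$ contains the block $ca$, which by your own alternation observation occurs in no cyclic conjugate of $R^{\pm1}$ --- and yet, using $aba^{-1}b^{-1}=dcd^{-1}c^{-1}$ twice and cancelling, $w=a^{-1}dc^{2}d^{-1}c^{-1}$ has length $6<8$. So Dehn-irreducibility does not certify geodesity, and your final step collapses as stated.

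The argument is salvageable precisely because your words are far better than Dehn-irreducible: a word in $\{a_i^{\pm1}\}$ contains at most \emph{one} letter of any relator conjugate, not $2g$. This is what the paper exploits. Writing the hypothetical shorter geodesic as $\beta$ and applying Dehn's algorithm to $\alpha\beta^{-1}$, any reducible block of length $\geq 2g+1$ can contain at most the last letter of $\alpha$, hence at least $2g$ letters of the geodesic $\beta^{-1}$; performing the Dehn substitution strictly shortens the $\beta$-side while deleting one letter of $\alpha$, and an induction on $|\alpha|$ then yields a nonempty positive $a$-word equal to $1$, a contradiction. You need to carry out some version of this junction analysis (for the mixed words $u_k^{-1}\dotsb u_{\ell+1}^{-1}v_{\ell+1}\dotsb v_m$ as well) rather than citing geodesity of Dehn-reduced words as a black box.
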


\begin{proof}
This is an elementary consequence of Dehn's algorithm
(cf. \cite{stillwell}). Consider a self-avoiding path $\alpha
=a_{(1)}a_{i (2)}\dotsb a_{i (m)}$ in $\mathcal{F}_{A}^{+}$. If this
were not a geodesic segment, then there would exist a geodesic path 
$\beta =x_{n}^{-1}x_{n-1}^{-1}\dotsc x_{1}^{-1}$, with $n<m$, such that 
\[
	a_{(1)}a_{i (2)}\dotsb a_{i (m)}x_{1}x_{2}\dotsb x_{n}=1.
\]
According to Dehn's algorithm, either $a_{i (m)}=x_{1}^{-1}$, or there
must exist a block of between $2g+1$ and $4g$ consecutive letters that
can be shortened by using (a cyclic rewriting of) the fundamental
relation \eqref{eq:surfaceRelations}. Since $\beta $ is geodesic, this
block must include the last letter of $\alpha$; and because $a$'s and
$b$'s alternate in the fundamental relation, it must actually begin
with the last letter $a_{i (m)}$, and therefore include at least the
first $2g$ letters $x_{1},\dotsc ,x_{2g}$. Hence, the Dehn shortening
results in
\[
	a_{(1)}a_{i (2)}\dotsb a_{i (m-1)}y_{1}y_{2}\dotsb y_{k}=1,
\]
where $k<m-1$. Therefore, by induction on $m$, there exists $r\geq
m-n\geq 1$ such that 
\[
	a_{(1)}a_{i (2)}\dotsb a_{i (r)}=1.
\]
But this is impossible,  by Dehn. This proves that every self-avoiding
path in $\mathcal{F}_{A}^{+}$ beginning at the root $1$ is geodesic,
and it follows by homogeneity that every self-avoiding path in
$\mathcal{F}_{A}^{+}$ is also geodesic. Finally, this implies that the
image of $\mathcal{F}_{A}^{+}$ in the Cayley graph is a tree.
\end{proof}

\begin{note}\label{note:subtrees}
The presence of large free  semigroups is a general property of
word-hyperbolic groups (see for example \cite{gromov}, Th.~5.3.E), and
for this reason it may well be possible to generalize the arguments
below. The primary obstacle to generalization seems to be in obtaining
suitable \emph{a priori} estimates on the first-passage generating
functions to use in conjunction with Lemma~\ref{lemma:treeCrossing} below.
\end{note}

\subsection{Crossing a tree}\label{ssec:crossingTree}

Say that a path $\gamma $ in the Cayley graph $G^{\Gamma}$
\emph{crosses} a rooted subtree $T$ of degree $d$ if either it visits
the root of the tree, at which time it terminates, or if it crosses
each of the $d$ subtrees $T_{i}$ of $T$ attached to the root. (In the
latter case, the path must terminate at the root of the last subtree
it crosses.) Observe that if $G^{\Gamma}$ is planar, as when $\Gamma
=\Gamma_{g}$ is a surface group, this definition of crossing accords
with the usual topological notion of a crossing. For a vertex $x\in
\Gamma$, let $\mathcal{P} (x;T)$ be the set of all paths starting at
$x$ that cross $T$. Let $\mathcal{P}^{m} (x;T)$ be the set of all
paths in $\mathcal{P} (x;T)$ of length $\leq m$. Define
\begin{align*}
	H_{r} (x;T)&:=\sum_{\gamma \in \mathcal{P} (x;T)}w_{r} (\gamma)
	\quad \text{and}\\
	H^{m}_{r} (x;T)&:=\sum_{\gamma \in \mathcal{P}^{m} (x;T)}w_{r} (\gamma).
\end{align*}
(Recall that $w_{r} (\gamma)$ is the $r-$weight of the path $\gamma$,
defined by \eqref{eq:weight}). The following result is the essence
of the  walkabout argument.

\begin{lemma}\label{lemma:treeCrossing}
Suppose that $F_{r} (1,x)\leq \beta $ for every $x\not =1$ and some
constant $\beta <1$. If $\beta$ is sufficiently small, then for every
rooted subtree $T\subset G^{\Gamma}$ of degree $d\geq 2$ and every
vertex $x\not \in T$,
\begin{equation}\label{eq:crossingIneq}
	H_{r} (x;T)\leq 2\beta .
\end{equation}
\end{lemma}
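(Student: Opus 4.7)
The plan is to establish a self-referential bound on $H_r(x;T)$ by decomposing every crossing path either at its first visit to the root $v_0$ of $T$ (contributing at most $\beta$ by hypothesis) or into a concatenation of $d$ successive crossings of the subtrees $T_1,\ldots,T_d$ rooted at the children of $v_0$. Because the definition of crossing is itself recursive, the bound has to be closed by an induction on the \emph{length} of the path, working throughout with the truncated generating functions $H_r^m$.

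Set
\[
h_m := \sup\bigl\{H_r^m(x;T) : x \notin T,\ T \text{ a rooted subtree of } G^{\Gamma} \text{ of minimum out-degree } \geq d\bigr\},
\]
with $h_0 = 0$ (the empty path cannot cross a tree disjoint from its start). For $m \geq 1$, a path $\gamma$ of length $\leq m$ crossing $T$ falls into one of two classes: either (a) $\gamma$ terminates on a first visit to $v_0$, contributing weight at most $F_r(x,v_0) \leq \beta$; or (b) $\gamma$ admits a decomposition $\gamma = \gamma_1\gamma_2\cdots\gamma_d$ in which, for some permutation $(i_1,\ldots,i_d)$ of $(1,\ldots,d)$, each $\gamma_j$ is a crossing of the subtree $T_{i_j}$ that begins at the endpoint of $\gamma_{j-1}$ (or at $x$ when $j=1$) and has length at least $1$. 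Since $d \geq 2$, each $\gamma_j$ has length at most $m-(d-1) \leq m-1$; each $T_{i_j}$ inherits the minimum-out-degree-$d$ property from $T$; and because distinct $T_i$ meet only at $v_0$ while the endpoint of a crossing of $T_{i_j}$ lies strictly inside $T_{i_j}$, the start of $\gamma_{j+1}$ is automatically outside $T_{i_{j+1}}$. The inductive hypothesis therefore bounds the weight of each $\gamma_j$ by $h_{m-1}$. Multiplicativity of $w_r$ under concatenation and summation over the $d!$ orderings give
\[
H_r^m(x;T) \leq \beta + d!\, h_{m-1}^d, \qquad \text{and hence} \qquad h_m \leq \beta + d!\, h_{m-1}^d.
\]

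A routine induction from $h_0 = 0$ now yields $h_m \leq 2\beta$ for every $m$, provided $\beta + d!(2\beta)^d \leq 2\beta$, i.e.\ $d!\,2^d \beta^{d-1} \leq 1$. Since $d \geq 2$ this holds for all $\beta$ below an explicit positive threshold (for instance $\beta \leq 1/8$ handles $d=2$, and larger $d$ is less restrictive once $\beta$ is small). Letting $m \to \infty$ and invoking monotone convergence gives the desired bound $H_r(x;T) \leq 2\beta$.

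The main obstacle, and what forces the induction-on-length strategy rather than a direct algebraic closure, is the recursive nature of the definition of a crossing: one must verify that every crossing path genuinely admits the decomposition in (b), with the orderings and endpoints unique enough to avoid double-counting. This boils down to a careful unwinding of the recursion to confirm that the endpoint of each $\gamma_j$ lies strictly inside $T_{i_j}$ (and not at $v_0$ or in some other $T_{i_k}$), so that the successor crossing $\gamma_{j+1}$ indeed starts outside $T_{i_{j+1}}$ and the inductive bound $h_{m-1}$ legitimately applies. Once this combinatorial bookkeeping is in place, the closure of the recursion is straightforward.
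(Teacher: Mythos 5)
Your decomposition is the same one the paper uses: truncate by path length, split a crossing path at its first visit to the root of $T$ (cost $\le\beta$) or into $d$ successive crossings of the subtrees $T_1,\dotsc,T_d$, and close the resulting recursion. The bookkeeping about lengths ($\le m-1$ per piece), inherited out-degree, and the starting point of $\gamma_{j+1}$ lying outside $T_{i_{j+1}}$ all matches the paper's argument.

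There is, however, one genuine defect. Because you allow the subtrees to be crossed in an arbitrary order, you pick up the factor $d!$ and your recursion closes only under $d!\,2^{d}\beta^{d-1}\le 1$. Contrary to your parenthetical remark that ``larger $d$ is less restrictive,'' this condition \emph{fails} for every fixed $\beta>0$ once $d$ is large: by Stirling the admissible range is $\beta\lesssim e/(2d)\to 0$. So your argument does not produce a single threshold $\beta_{0}$ valid for all $d\ge 2$, which is what the lemma asserts; worse, in the application the lemma is invoked with $d=g\to\infty$ while $\beta=\sup_{x\ne 1}F_R(1,x)$ is only known to tend to $0$ at an unspecified rate, so the $d$-dependence cannot be absorbed. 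The paper avoids the $d!$ by treating the crossing as an \emph{ordered} event --- $\gamma_1$ crosses $T_1$, then $\gamma_{i+1}$ crosses $T_{i+1}$, consistent with the topological notion of crossing in the planar Cayley graph --- which yields the recursion $y=\beta+y^{d}$; its smallest positive root is $<2\beta$ as soon as $2^{d}\beta^{d-1}\le 1$, i.e.\ uniformly in $d\ge 2$ once $\beta\le 1/4$. To repair your write-up, either adopt the ordered definition of crossing (and justify, via planarity, that this is the notion needed in the barrier argument) or otherwise eliminate the sum over the $d!$ orderings.
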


\begin{proof}
It suffices to show that the inequality holds with $H_{r} (x;T)$
replaced by $H^{m}_{r} (x;T)$ for any $m\geq 1$.  Now the generating
function $H^{m} (x;T)$ is a sum over paths of length $\leq m$. In
order that such a path $\gamma $ starting at $x$ crosses $T$, it must
either visit the root of $T$, or it must cross each of the $d$
offshoot tree $T_{i}$. Since these are pairwise disjoint, a path
$\gamma $ that crosses every $T_{i}$ can be decomposed as $\gamma
=\gamma_{1}\gamma_{2}\dotsb \gamma_{d}$, where $\gamma_{1}$ starts at
$x$ and crosses $T_{1}$, and $\gamma_{i+1}$ starts at the endpoint of
$\gamma_{i}$, in $T_{i}$, and crosses $T_{i+1}$. Each $\gamma_{i}$
must have length at least $1$; hence, since their concatenation has
length $\leq m$, each $\gamma_{i}$ must have length $\leq m-d+1\leq
m-1$.  Since the sum of $w_{r} (\gamma)$ over all paths $\gamma$ from
$x$ to the root of $T$ is no larger than $\beta$, by hypothesis, it
follows that
\[
	\sup_{T}\sup_{x\not \in T}H^{m}_{r} (x;T)
	\leq  \beta +
	\left(\sup_{T}\sup_{x\not \in T}H^{m-1}_{r} (x;T)\right)^{d}
\]
Therefore, since $H^{1}_{r} (x;T)\leq \beta$, for every $m\geq 1$ the
value of $H^{m}_{r} (x;T)$ is bounded above  by the smallest positive
root of the equation 
\[
	y=\beta +y^{d}.
\]
For $\beta >0$ sufficiently small, this root is less than $2\beta$,
regardless of $m$.
\end{proof}

\subsection{Exponential decay of the Green's
function}\label{ssec:expDecay} Assume now that $\Gamma $ has a planar
Cayley graph $G^{\Gamma}$, and that this is embedded
quasi-isometrically in the hyperbolic plane.

\begin{definition}\label{definition:barrier}
Let $\gamma= x_{0}x_{1}\dotsb x_{m}$ be a geodesic segment in the
Cayley graph $G^{\Gamma}$. Say that a vertex $x_{k}$ on $\gamma$ is a
\emph{barrier point} if there are disjoint rooted subtrees
$T_{k},T_{k}'$ in the Cayley graph, both of of outdegree $d\geq 2$,
and neither intersecting $\gamma$, whose roots $y_{k}$ and $z_{k}$ are
vertices neighboring $x_{k}$ on opposite sides of $\gamma$.  Call
$T_{k}\cup T_{k}'\cup \{x_{k} \}$ a \emph{barrier}, and the common
outdegree $d$ the \emph{order} of the barrier.
\end{definition}

Note that a barrier must disconnect the hyperbolic plane in such a way
that the initial and final segments $x_{0}x_{1}\dotsb x_{k-1}$ and
$x_{k+1}x_{k+2}\dotsb x_{m}$ of $\gamma$ lie in opposite
components. In the arguments of \cite{anderson-schoen} and
\cite{ancona:annals}, the region of hyperbolic space separating two
\emph{cones} plays the role of a barrier.

\begin{proposition}\label{proposition:barriers}
Let $\Gamma =\Gamma_{g}$ be the surface group of genus $g\geq
2$. There is a constant $\kappa =\kappa_{g}<\infty$ such that  along every
geodesic segment $\gamma$ of length $\geq \kappa n$ there are $n$
disjoint barriers $B_{i}$, each of order $g$.
\end{proposition}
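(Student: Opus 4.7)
The plan is to exploit the planarity of $G^\Gamma$ together with the semigroup trees of Proposition \ref{proposition:embeddedTrees}. The standard presentation of $\Gamma_g$ has a single defining relation \eqref{eq:surfaceRelations} of length $4g$, so $G^\Gamma$ embeds as the $1$-skeleton of a regular tessellation of $\mathbb{H}^2$ by $4g$-gons, with $4g$ edges meeting at each vertex in a cyclic order dictated by the fundamental relation. In this embedding every geodesic segment $\gamma = x_0x_1\cdots x_m$ has locally well-defined left and right sides, and at each interior vertex $x_k$ the two $\gamma$-edges divide the remaining $4g-2$ incident edges into two cyclic arcs: the \emph{left arc} (into $\overline{L}$) and the \emph{right arc} (into $\overline{R}$).

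To place a barrier at $x_k$, I will pick a generator $s_k$ from the left arc, producing a left-neighbor $y_k = x_ks_k$, and set $T_k = y_k\cdot \mathcal{F}$ where $\mathcal{F}$ is one of the semigroups $\mathcal{F}_A^\pm,\mathcal{F}_B^\pm$ chosen so that its $g$ generators, translated to $y_k$, lead into the left arc at $y_k$, and so that the augmented set $\{s_k\}\cup \mathcal{F}$ is Dehn-reduced. The key combinatorial input is that on each cyclic arc at $x_k$ one can find $g$ generators whose positive words admit no Dehn shortening: any such shortening would require a subword of $\geq 2g+1$ consecutive letters of a cyclic shift of the relator \eqref{eq:surfaceRelations}, and since the relator alternates letters and inverses, one can always select $g$ "unidirectional" generators in each arc that never produce such a subword. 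The right-sided subtree $T_k'$ is constructed symmetrically.

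The main claim is that $T_k\cap \gamma = \emptyset$ (and similarly $T_k'\cap \gamma=\emptyset$). A vertex of $T_k$ has the form $y_kw$ with $w\in\mathcal{F}$; an intersection $y_kw = x_j$ would force $s_kw = x_k^{-1}x_j$ as group elements. Both sides are geodesic words (the left because $\{s_k\}\cup\mathcal{F}$ is Dehn-reduced by construction, the right because it is a sub-segment of $\gamma$). For the two to represent the same element they must agree literally as reduced words, which is ruled out because by the choice of $\mathcal{F}$ no word in $\{s_k\}\cup\mathcal{F}$ can be a prefix of the $\gamma$-word; alternatively a Dehn reduction must intervene, forcing $|w|+1 \geq 4g$ (the girth of $G^\Gamma$) and producing a loop bounding at least one $4g$-gonal face, which a short Dehn-algorithm inspection excludes using the remaining freedom in $\mathcal{F}$.

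The $n$ barriers are then placed at centers $x_{k_1},\dots,x_{k_n}$ spaced at intervals of at least $\kappa = \kappa_g$ along $\gamma$. The main obstacle is the quantitative control of $\kappa$ required to ensure pairwise disjointness of the infinite $g$-ary subtrees $T_{k_i}$ and $T_{k_j}$: each such subtree covers a substantial portion of its half-plane, so separating them requires bounding how far along $\gamma$ each subtree can "spread". Here I would appeal to the exponential divergence of geodesics in $\mathbb{H}^2$: each geodesic ray in $T_{k_i}$ emanates from $x_{k_i}$ transversely to $\gamma$ and, after a bounded initial excursion, lies in an exponentially thinning cone around a ray through $x_{k_i}$ at a fixed angle determined by the local geometry of the tessellation. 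Taking $\kappa$ large enough — depending only on the combinatorial structure of the $4g$-gon tessellation and not on the particular $\gamma$ — makes these cones, and hence the subtrees, disjoint for different barrier points. Making this divergence-based separation quantitative is the main technical ingredient.
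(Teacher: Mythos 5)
Your overall framework (planarity, the $4g$-gon tessellation, and the semigroup trees of Proposition~\ref{proposition:embeddedTrees}) matches the paper's, but two of your central steps have genuine gaps. First, your argument that $T_k\cap\gamma=\emptyset$ rests on the assertion that if $s_kw$ and $x_k^{-1}x_j$ are both geodesic words representing the same group element then ``they must agree literally as reduced words.'' That is false in $\Gamma_g$: a surface group is not free, and distinct geodesic words frequently represent the same element (geodesic bigons exist; the whole point of Lemma~\ref{lemma:cutPoint} is to identify the special tiles through which \emph{all} geodesics are forced). Your fallback (``a short Dehn-algorithm inspection excludes\dots'') is not an argument. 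The paper avoids this issue entirely by first proving Lemma~\ref{lemma:geodesics}: once a geodesic in the Cayley graph crosses a bounding geodesic of the tessellation, it remains in the exterior halfplane forever. Barriers are then attached only at \emph{cut points} (Definition~\ref{definition:cutPoint}), i.e.\ tiles $x\mathcal{P}$ which $\gamma$ enters across a bounding geodesic $xL$ and leaves across a disjoint bounding geodesic $xL'$; the trees are chosen to lie in the intersection of the \emph{interior} halfplanes of $xL$ and $xL'$, while the past and future of $\gamma$ live in the exteriors, so non-intersection with $\gamma$ is automatic.

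Second, the pairwise disjointness of the $n$ barriers --- the actual quantitative content of the proposition --- is exactly the point you leave open, deferring to ``exponential divergence of geodesics'' and conceding it is ``the main technical ingredient.'' This is not a minor omission: the $T_{k_i}$ are infinite $g$-ary trees whose limit sets occupy arcs of the boundary circle, and two such trees rooted a bounded distance $\kappa_g$ apart on the same side of $\gamma$ have no a priori reason to be disjoint; a fixed-angle ``thinning cone'' estimate would itself have to be proved and would depend delicately on the tessellation. The paper gets disjointness for free from the same halfplane nesting: each barrier's trees sit inside the interiors of $xL,xL'$, each later cut point lies in the exterior of $xL'$, and its attached trees sit in halfplanes contained in that exterior. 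Finally, you have no analogue of the counting step that actually produces $\kappa_g$: the paper observes that a geodesic cannot follow more than $2g$ consecutive letters of a cyclic conjugate of the relator, so a cut point occurs at least once every $4g$ steps, yielding $\kappa_g=8g$. By placing barriers at arbitrary spaced vertices rather than at cut points you also lose the companion fact (needed later in the proof of the Ancona inequalities) that \emph{every} geodesic from $u$ to $v$ passes through the barrier tile.
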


\begin{note}\label{note:kappa}
The value of the constant $\kappa $ is not
important in the arguments to follow. The argument below shows that
$\kappa_{g}=8g$ will work.
\end{note}

The proof of Proposition~\ref{proposition:barriers} is deferred to
section \ref{ssec:barriers} below.  Given the existence of barriers,
the tree-crossing Lemma~\ref{lemma:treeCrossing}, and the \emph{a
priori} estimate on the Green's function provided by
Proposition~\ref{proposition:vanishingF}, the exponential decay of the
Green's function at the spectral radius follows routinely:

\begin{theorem}\label{theorem:expDecay}
If the genus $g$ is sufficiently large, then the Green's function
$G_{R} (x,y)$ of simple random walk on the surface group $\Gamma_{g}$,
evaluated at the spectral radius $R=R_{g}$, decays exponentially in
the distance $d (x,y)$, that is, there exist constants
$C=C_{g}<\infty$ and $\varrho =\varrho_{g}<1$ such that for every
$x\in \Gamma_{g}$,
\begin{equation}\label{eq:expDecay2}
	G_{R} (1,x)\leq C\varrho^{|x|}.
\end{equation}
\end{theorem}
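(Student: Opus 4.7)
\medskip

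\noindent\textbf{Proof plan.} The strategy is to combine the three ingredients developed in this section: the \emph{a priori} smallness of the first-passage generating functions (Proposition~\ref{proposition:vanishingF}), the tree-crossing estimate (Lemma~\ref{lemma:treeCrossing}), and the abundance of disjoint barriers along any geodesic (Proposition~\ref{proposition:barriers}). First, fix the genus $g$ large enough that
\[
	\beta := \sup_{x\neq 1} F_R(1,x)
\]
is so small that Lemma~\ref{lemma:treeCrossing} applies and yields $H_R(y;T)\leq 2\beta$ for every rooted subtree $T\subset G^{\Gamma}$ of outdegree $\geq 2$ and every $y\notin T$; this is possible by Proposition~\ref{proposition:vanishingF}.

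Next, for $x\in\Gamma_g$ with $|x|\geq \kappa n$, I would apply Proposition~\ref{proposition:barriers} to a geodesic from $1$ to $x$ to produce $n$ pairwise disjoint barriers $B_1,B_2,\dotsc,B_n$ of order $g$, labeled in the order they appear along the geodesic. Each barrier $B_i=T_i\cup T_i'\cup\{x_{k_i}\}$ consists of two infinite subtrees attached to a central vertex on opposite sides of the geodesic, and since the Cayley graph is quasi-isometrically embedded in the hyperbolic plane, $B_i$ disconnects this plane into a component containing $1$ (together with all $B_j$ for $j<i$) and a component containing $x$ (together with all $B_j$ for $j>i$). Consequently, every random walk path $\gamma$ from $1$ to $x$ must visit every $B_i$, and the first-hitting times $\tau_1,\dotsc,\tau_n$ necessarily occur in the strict order $\tau_1<\tau_2<\dotsb<\tau_n$.

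Then, decompose each such $\gamma$ at the times $\tau_i$ into consecutive sub-paths $\gamma_0,\gamma_1,\dotsc,\gamma_n$, where $\gamma_0$ runs from $1$ to the first-hit vertex $v_1\in B_1$, $\gamma_i$ runs from $v_i$ to the first-hit vertex $v_{i+1}\in B_{i+1}$ for $1\leq i\leq n-1$, and $\gamma_n$ runs from $v_n$ to $x$. Using the multiplicativity of $w_R$, summation over the intermediate vertices $v_i$, and Lemma~\ref{lemma:treeCrossing} applied to an appropriate subtree of each $B_{i+1}$ to bound each intermediate first-passage factor (together with $F_R(v_n,x)\leq\beta$ for the final segment), one obtains a bound of the form $F_R(1,x)\leq (c\beta)^n$ for some universal constant $c$. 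Setting $n=\lfloor |x|/\kappa\rfloor$ and using $G_R(1,x)=F_R(1,x)G_R(1,1)$ together with Proposition~\ref{proposition:gGreenAsymptotics} then yields the claimed exponential decay with $\varrho=(c\beta)^{1/\kappa}<1$, provided $\beta<1/c$, which is achieved by taking $g$ larger still.

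The main obstacle lies in the justification of this last step. Lemma~\ref{lemma:treeCrossing} bounds only paths which ``cross'' a tree in a specific recursive sense, not paths that merely first-hit it, so each first-passage weight from $v_i$ to $B_{i+1}$ needs to be related to a quantity to which the lemma actually applies. Because the Cayley graph contains non-tree edges, a random walk path can in principle enter a barrier tree at a non-root vertex; one must therefore either verify that the barriers produced by Proposition~\ref{proposition:barriers} have the property that entry is forced through a small number of distinguished vertices, or argue by planarity that a topological crossing of $B_{i+1}$ from the $1$-side to the $x$-side forces a genuine recursive tree-crossing in the sense of Lemma~\ref{lemma:treeCrossing}.
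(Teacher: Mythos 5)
Your overall strategy --- combine Proposition~\ref{proposition:vanishingF}, Lemma~\ref{lemma:treeCrossing}, and Proposition~\ref{proposition:barriers}, and extract one small factor per barrier --- is exactly the paper's. But the specific decomposition you propose, at the \emph{first-hitting times} of the barriers, does not produce small factors, and the ``main obstacle'' you flag at the end is not a technicality to be checked but the actual content of the step. The quantity you would need to be small is the total weight of paths from $v_i$ until they first touch $B_{i+1}$, i.e.\ $\sum_{v\in B_{i+1}}(\text{first-passage weight from } v_i \text{ to } v \text{ avoiding } B_{i+1})$. Since each barrier contains two \emph{infinite} trees, this is a sum over infinitely many target vertices of terms each of size up to $\beta$, and there is no reason for it to be bounded by $c\beta$, or even by $1$: merely reaching the nearest tip of a barrier tree is cheap. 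Smallness comes not from hitting a barrier but from getting \emph{through} it.

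The paper's resolution is to decompose at the times when the path \emph{completes a crossing} of each barrier, where ``crossing'' is the recursive notion of section~\ref{ssec:crossingTree}: to cross a rooted tree of outdegree $g$ one must either reach its root or cross all $g$ of its subtrees, and Lemma~\ref{lemma:treeCrossing} bounds the total weight of such crossings by $2\beta$. Because the Cayley graph is planar and each barrier $B=T\cup T'\cup\{y\}$ disconnects the hyperbolic plane with $1$ and $x$ in opposite components, a path from $1$ to $x$ must topologically cross $B$, and --- this is the observation made just after the definition of crossing --- for planar graphs the topological and recursive notions coincide; so the path must either cross $T$, cross $T'$, or pass through the central vertex $y$. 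Summing these three options gives the per-barrier factor $H_R(\cdot;T)+H_R(\cdot;T')+F_R(\cdot,y)<1/2$ for $\beta$ small, and since the $n\geq |x|/\kappa_g$ barriers are disjoint and must be crossed in order, the multiplicativity of $w_R$ applied at the successive crossing-completion vertices yields $F_R(1,x)\leq 2^{-|x|/\kappa_g}$, whence \eqref{eq:expDecay2} via $G_R(1,x)=F_R(1,x)G_R(1,1)$. So you identified the right missing ingredient but left it unresolved; without replacing the first-hitting decomposition by the crossing decomposition, the bound $F_R(1,x)\leq(c\beta)^n$ does not follow.
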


\begin{proof}
By Proposition \ref{proposition:vanishingF}, for any $\beta >0$ there
exists $g_{\beta}<\infty$ so large that if $g\geq g_{\beta}$ then the
first-passage generating functions of the simple random walk on
$\Gamma_{g}$ satisfy $F_{R} (1,x)<\beta$ for all $x\not =1$. By
Lemma~\ref{lemma:treeCrossing}, the tree-crossing generating functions
$H_{R} (x;T)$ for trees of outdegree $g$ satisfy $H_{R} (x;T)\leq
\beta +\beta^{g}/ (1-\beta^{g})$. If $\beta $ is sufficiently
small then it follows that for any barrier $B=T\cup T'\cup \{y \}$ of
order $g$ and any vertex $x\not \in B$,
\[
	H_{R} (x;T)+H_{R} (x;T')+F_{R} (x,y)<1/2.
\]
By Proposition \ref{proposition:barriers},  a path
$\gamma$ from $1$ to $x$ must cross $|x|/\kappa_{g}$ distinct
barriers. Therefore,
\[
	F_{R} (1,x)\leq 2^{-|x|/\kappa_{g}}.
\]
\end{proof}

\subsection{Action of $\Gamma_{g}$ on the hyperbolic
plane}\label{ssec:ationHyp} 

The surface group $\Gamma =\Gamma_{g}$ acts by hyperbolic isometries
of the hyperbolic plane $\zz{H}$. This action provides a useful
description of the Cayley graph $G^{\Gamma}$, using the tessellation
$\mathcal{T}\:=\{x\mathcal{P} \}_{x\in \Gamma }$ of the hyperbolic
plane $\zz{H}$ by fundamental polygons (``tiles'') $x\mathcal{P}$
(see, e.g., \cite{katok}, chs. 3--4); for the surface group $\Gamma_{g}$ the
polygon $\mathcal{P}$ can be chosen to be a regular $4g-$sided polygon
(cf. \cite{katok}, sec.~4.3, Ex. C).  The tiles serve as the vertices
of the Cayley graph; two tiles are adjacent if they share a
side. Thus, each group generator $a_{i}^{\pm},b_{i}^{\pm}$ maps
$\mathcal{P}$ onto one of the $4g$ tiles that share sides with
$\mathcal{P}$.  The sides of $\mathcal{P}$ (more precisely, the
geodesics gotten by extending the sides) can be labeled clockwise, in
sequence, as 
\[
A_{1},B_{1},\bar{{A}}_{1},\bar{B}_{1},\dotsc,
\bar{B}_{g}
\]
in such a way that each generator $a_{i}$ maps the
\emph{exterior} of the geodesic $A_{i}$ onto the \emph{interior} of
$\bar{A}_{i}$, and similarly $b_{i}$ maps the {exterior} of the
geodesic $B_{i}$ onto the {interior} of $\bar{B}_{i}$. Observe that
$4g$ tiles meet at every vertex of $\mathcal{P}$; for each such
vertex, the successive group elements in some cyclic rewriting of the
fundamental relation \eqref{eq:surfaceRelations}, e.g.,
\[
	a_{1}, a_{1}b_{1}, a_{1}b_{1}a_{1}^{-1}\dotsb ,
\]
map the polygon $\mathcal{P}$ in sequence to the tiles arranged around
the vertex. Also, the full tessellation is obtained by drawing all of
the geodesics $xA_{i},xB_{i},x\bar{A}_{i},x\bar{B_{i}}$, where $x\in
\Gamma$: these partition $\zz{H}$ into the congruent polygons
$x\mathcal{P}$. Call the geodesics $A_{1},B_{1},\dotsc ,\bar{B_{g}}$
\emph{bounding geodesics} of $\mathcal{P}$, and their images by
isometries $x\in \Gamma$ \emph{bounding geodesics} of the
tessellation.

That the semigroups $\mathcal{F}_{A}^{\pm 1},\mathcal{F}_{B}^{\pm1}$
defined in section~\ref{ssec:embeddedTrees} are free corresponds
geometrically to the following important property of the tessellation
$\mathcal{T}$: The exteriors of two bounding geodesics of
$\mathcal{P}$ do not intersect unless the corresponding symbols are
adjacent in the fundamental relation \eqref{eq:surfaceRelations},
e.g., the exteriors of $A_{1}$ and $B_{1}$ intersect, but the
exteriors of $A_{1}$ and $B_{2}$ do not.

\begin{lemma}\label{lemma:geodesics}
Let $\gamma$ be a geodesic segment in the Cayley graph that begins at
$\mathcal{P}$ and on its first step jumps from $\mathcal{P}$ to the
tile $a_{i}\mathcal{P}$ (respectively, $b_{i}\mathcal{P}$, or
$a_{i}^{-1}\mathcal{P}$, or $b_{i}^{-1}\mathcal{P}$). Then $\gamma$
must remain in the halfplane exterior to $A_{i}$ (respectively,
$B_{i},\bar{A}_{i}$, or $\bar{B}_{i}$) on all subsequent jumps.
\end{lemma}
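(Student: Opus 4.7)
My plan is to argue by contradiction, using the reflection symmetry of the regular $\{4g,4g\}$-tessellation $\mathcal{T}$ of the hyperbolic plane. For concreteness I treat the case in which the first step goes from $\mathcal{P}$ to $a_{i}\mathcal{P}$; the other three cases are identical after replacing $A_{i}$ by the appropriate bounding geodesic. Write $H^{+}$ for the halfplane of $\mathbb{H}^{2}\setminus A_{i}$ containing $\mathcal{P}$ and $H^{-}$ for the opposite halfplane (containing $a_{i}\mathcal{P}$), and let $r$ denote the hyperbolic reflection across $A_{i}$. Since $\mathcal{P}$ is a regular $4g$-gon with $4g$ tiles meeting at each vertex of $\mathcal{T}$, the full isometry group of $\mathcal{T}$ contains $r$; consequently $r$ descends to an adjacency-preserving involution of the Cayley graph $G^{\Gamma}$ which interchanges $H^{+}\leftrightarrow H^{-}$ and in particular sends $\mathcal{P}\leftrightarrow a_{i}\mathcal{P}$.

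Now assume $\gamma=(x_{0},x_{1},\dots,x_{m})$ is a geodesic in $G^{\Gamma}$ with $x_{0}\mathcal{P}=\mathcal{P}$ and $x_{1}\mathcal{P}=a_{i}\mathcal{P}$, and suppose for contradiction that some $x_{k}\mathcal{P}$ lies in $H^{+}$ with $k\geq 2$; choose $k$ minimal. Then each of $x_{1}\mathcal{P},\dots,x_{k-1}\mathcal{P}$ lies in $H^{-}$, so the step $x_{k-1}\to x_{k}$ must cross $A_{i}$. Since the two tiles flanking a side on $A_{i}$ are interchanged by $r$, this gives $r(x_{k-1}\mathcal{P})=x_{k}\mathcal{P}$.

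Applying $r$ to the subpath $(x_{1}\mathcal{P},x_{2}\mathcal{P},\dots,x_{k-1}\mathcal{P})$, which is entirely contained in $H^{-}$, produces a legitimate path in $G^{\Gamma}$ of the same length $k-2$, starting at $r(a_{i}\mathcal{P})=\mathcal{P}$ and terminating at $r(x_{k-1}\mathcal{P})=x_{k}\mathcal{P}$. Hence $d(\mathcal{P},x_{k}\mathcal{P})\leq k-2$. On the other hand $(x_{0},x_{1},\dots,x_{k})$ is a subpath of a geodesic and is therefore itself geodesic of length $k$, so $d(\mathcal{P},x_{k}\mathcal{P})=k$. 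This forces $k\leq k-2$, an absurdity; the boundary case $k=2$ collapses to $x_{2}\mathcal{P}=\mathcal{P}$, also incompatible with $\gamma$ being geodesic. Thus no such $k$ exists, and every subsequent tile of $\gamma$ lies in $H^{-}$, as required.

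The single nontrivial ingredient is that $r$ actually preserves the tessellation; this is exactly why the paper takes $\mathcal{P}$ to be a regular $4g$-gon (as explicitly noted in section~\ref{ssec:ationHyp}), and is a standard feature of the regular hyperbolic $\{4g,4g\}$-tiling, whose full symmetry group is the $(2,4g,4g)$ triangle group and therefore includes reflection across every bounding geodesic. I do not anticipate any other obstacle.
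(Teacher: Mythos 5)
Your argument is correct, but it is genuinely different from the one in the paper. The paper proves the lemma by induction on the length of $\gamma$ using Dehn's algorithm: a quick recrossing of $A_{i}$ would force $\gamma$ to cycle through more than $2g$ of the $4g$ tiles meeting at a vertex of $\mathcal{P}$ on $A_{i}$, i.e.\ to complete more than half of a cyclic conjugate of the relation \eqref{eq:surfaceRelations}, contradicting geodesicity; and within $2g$ steps $\gamma$ must cross a bounding geodesic disjoint from $A_{i}$, at which point the induction hypothesis traps it. You instead exploit the reflection symmetry of the regular $\{4g,4g\}$ tessellation: the reflection $r$ across $A_{i}$ lies in the $(2,4g,4g)$ triangle group, hence induces a graph automorphism of the tile-adjacency graph, and reflecting the portion of $\gamma$ beyond the first crossing produces a path from $\mathcal{P}$ to $x_{k}\mathcal{P}$ of length $k-2$, contradicting $d(\mathcal{P},x_{k}\mathcal{P})=k$. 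The two hidden inputs in your argument are both legitimate here: (i) every tile lies entirely in one closed halfplane of $A_{i}$, because the tessellation is cut out by \emph{complete} geodesics (as the paper notes in section~\ref{ssec:ationHyp}), so the edge at which $\gamma$ recrosses does lie on $A_{i}$ and $r$ swaps the two flanking tiles; (ii) $r$ is a symmetry of $\mathcal{T}$, which holds because $\mathcal{P}$ is chosen to be a regular $4g$-gon. Your proof is shorter, non-inductive, and yields a clean quantitative contradiction; its cost is reliance on the reflection symmetry of this particular geometric realization, whereas the paper's combinatorial argument works at the level of the presentation and, as a byproduct of its inductive step, records the fact (reused in the barrier construction of section~\ref{ssec:barriers}) that a geodesic crosses a new disjoint bounding geodesic at least once every $2g$ steps.
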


\begin{proof}
By induction on the length of $\gamma$. First,  $\gamma$
cannot recross the geodesic line $A_{i}$ in $\zz{H}$ in
its first $2g+1$ steps, because to do so would require that $\gamma$
cycle through at least $2g+1$ tiles that meet at one of the vertices
of $\mathcal{P}$ on $A_{i}$. This would entail completing more
than half of a cyclic rewriting of the fundamental relation
\eqref{eq:surfaceRelations}, and so $\gamma$ would not be a geodesic
segment in the Cayley graph.

Now suppose that $|\gamma|\geq 2g+1$. Since $\gamma$ cannot complete
more than $2g$ steps of a fundamental relation, it must on some step
$j\leq 2g$ jump to a tile that does not meet $\mathcal{P}$ at a
vertex.  This tile must be on the other side of a bounding geodesic
$C$ that does not intersect $A_{i}$ (by the observation preceding the
lemma). The induction hypothesis implies that $\gamma$ must remain
thereafter in the halfplane exterior to this bounding geodesic, and
therefore in the halfplane exterior to $A_{i}$.
\end{proof}

\subsection{Existence of barriers: Proof of Proposition
\ref{proposition:barriers}}\label{ssec:barriers}  

Let $\gamma$ be a geodesic segment in the Cayley graph. Since $\gamma$
cannot make more than $2g$ consecutive steps in a relator sequence (a
cyclic rewriting of the fundamental relation), at least once  in every
$4g$ steps it must jump across a  bounding geodesic $xL$ into a tile
$x\mathcal{P}$, and  then on the next step jump across a bounding
geodesic $xL'$ that does not meet $xL$. By
Lemma~\ref{lemma:geodesics}, $\gamma$ must remain in the halfplane
exterior to $xL'$ afterwords. Similarly, by time-reversal, $\gamma$
must stay in the halfplane exterior to $xL$ up to the time it enters
$x\mathcal{P}$. Thus, the tile $x\mathcal{P}$ segments $\gamma$ into
two parts, past and future, that live in nonoverlapping halfplanes.

\begin{definition}\label{definition:cutPoint}
If a geodesic segment  $\gamma $ in the Cayley graph $G^{\Gamma}$
enters a tile $x\mathcal{P}$ by crossing a bounding geodesic $xL$ and
exits  by crossing a bounding geodesic $xL'$ that does not intersect
$xL$, then the tile $x\mathcal{P}$ --- or the vertex $x\in \Gamma$ --
is called  a \emph{cut point} for $\gamma$.
\end{definition}

\begin{lemma}\label{lemma:cutPoint}
Let $\gamma$ be a geodesic segment in $G^{\Gamma}$ from $u$ to $v$.
If $x$ is a cut point for $\gamma$, then it is also a barrier
point. Moreover, \emph{every} geodesic segment from $u$ to $v$ passes
through $x$.
\end{lemma}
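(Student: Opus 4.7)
The plan is to handle the two assertions in turn, with the \emph{every geodesic passes through $x$} claim proved first, since it then makes the barrier construction nearly immediate.

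For the uniqueness of passage, I would exploit the fact that the two disjoint bounding geodesics $xL$ and $xL'$ partition the hyperbolic plane $\zz{H}$ into three regions: the halfplane $R_{u}$ exterior to $xL$ (not containing $x\mathcal{P}$), the strip $R_{\mathrm{mid}}$ containing $x\mathcal{P}$, and the halfplane $R_{v}$ exterior to $xL'$. Applying Lemma~\ref{lemma:geodesics} to the initial and terminal segments of $\gamma$ at the steps $y_{-}\to x$ (in reverse time) and $x\to y_{+}$ (forward time) shows $u\in R_{u}$ and $v\in R_{v}$. Now let $\gamma'$ be any other geodesic from $u$ to $v$. Since $\gamma'$ must cross both $xL$ and $xL'$, and by Lemma~\ref{lemma:geodesics} cannot recross either, there is a unique first tile $w_{1}\mathcal{P}$ of $\gamma'$ in $R_{\mathrm{mid}}$ (entered across $xL$) and a unique last tile $w_{2}\mathcal{P}$ (exited across $xL'$). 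The crux is that $x\mathcal{P}$ is the only tile of the tessellation with both $xL$ and $xL'$ among its sides, so $w_{1}=w_{2}$ forces $w_{1}=w_{2}=x$. I would close the argument by comparing lengths: the sub-geodesic of $\gamma'$ from $w_{1}$ to $w_{2}$ stays in $R_{\mathrm{mid}}$, and if $w_{1}\neq x$ or $w_{2}\neq x$, one gets $d(w_{1},w_{2})\geq 1$ along with $d(u,w_{1})\geq d(u,y_{-})$ and $d(w_{2},v)\geq d(y_{+},v)$, forcing $|\gamma'|>d(u,y_{-})+2+d(y_{+},v)=|\gamma|$ and contradicting that $\gamma'$ is geodesic.

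For the barrier property, the $4g$ sides of $x\mathcal{P}$ contain two used by $\gamma$, namely $\overline{xL}$ and $\overline{xL'}$, and since $xL,xL'$ do not intersect these two sides are non-adjacent in the polygon. The remaining $4g-2$ sides therefore split into two arcs $\alpha^{+},\alpha^{-}$ on opposite sides of $\gamma$, each containing at least one side. Pick neighbors $y^{\pm}$ of $x$ across sides lying in $\alpha^{\pm}$, and note that from $y^{\pm}$ exactly one generator $s^{\pm}$ maps back to $x$. Among the four free sub-semigroups $\mathcal{F}_{A}^{\pm},\mathcal{F}_{B}^{\pm}$ of Proposition~\ref{proposition:embeddedTrees}, at least one has a generating set not containing $(s^{\pm})^{-1}$; translating such a semigroup by left-multiplication by $y^{\pm}$ produces a rooted tree $T^{\pm}$ of outdegree $g$ whose every root-to-vertex path is a Cayley geodesic starting with a first step across some bounding geodesic of $y^{\pm}\mathcal{P}$ other than the one shared with $x\mathcal{P}$. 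Lemma~\ref{lemma:geodesics} then confines each such path — hence all of $T^{\pm}$ — to the halfplane exterior to that first bounding geodesic, which does not contain $x\mathcal{P}$; combined with part (ii) this places $T^{\pm}$ entirely on the expected side of $\gamma$, disjoint from it.

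The main obstacle is the length-accounting step in the uniqueness argument: rigorously ruling out that $\gamma'$ enters $R_{\mathrm{mid}}$ at some $w_{1}\neq x$ and exits at some $w_{2}\neq x$, sneaking across the strip in a cheaper way than $\gamma$ does. Making this precise requires exploiting the hyperbolic geometry of the $4g$-gon tessellation together with Dehn's algorithm (as in Proposition~\ref{proposition:embeddedTrees} and Lemma~\ref{lemma:geodesics}) to show that any traversal of the strip not passing through the single tile spanned by both $xL$ and $xL'$ must pay at least one extra step. Once this technical point is in place, both claims fall out as described; the tree-construction side is then essentially bookkeeping with the free sub-semigroups.
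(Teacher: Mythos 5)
Your treatment of the barrier assertion is essentially the paper's argument: both proofs root translated copies of the free sub-semigroups of Proposition~\ref{proposition:embeddedTrees} at tiles adjacent to $x\mathcal{P}$ on the two sides of $\gamma$ and use Lemma~\ref{lemma:geodesics} to keep them off $\gamma$. One small imprecision there: a tree of outdegree $g$ has $g$ distinct first steps, each crossing a \emph{different} bounding geodesic of $y^{\pm}\mathcal{P}$, so the whole tree is not confined to a single halfplane ``exterior to that first bounding geodesic''; what you actually need, and what the paper asserts, is that a suitably chosen tree lies in the intersection of the halfplanes \emph{interior} to $xL$ and $xL'$, hence in the strip, hence disjoint from the initial and final segments of $\gamma$.

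The genuine gap is in the uniqueness assertion, and it is exactly the step you flag as the main obstacle. Your reduction --- $\gamma'$ crosses $xL$ and $xL'$ exactly once each, giving well-defined entry and exit tiles $w_{1}\mathcal{P}$, $w_{2}\mathcal{P}$ in the strip --- is fine, but the length accounting you propose does not close the argument. First, the inequalities $d(u,w_{1})\geq d(u,y_{-})$ and $d(w_{2},v)\geq d(y_{+},v)$ are unjustified: $w_{1}$ is merely \emph{some} strip tile adjacent to $xL$, and nothing forces it to be at least as far from $u$ as $y_{-}$ is. Second, even granting those inequalities together with $d(w_{1},w_{2})\geq 1$, you only obtain $|\gamma'|\geq d(u,y_{-})+1+d(y_{+},v)=|\gamma|-1$, which is no contradiction; the displayed conclusion $|\gamma'|>|\gamma|$ does not follow from the stated premises. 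The missing idea is the one the paper supplies: a path from $w_{1}\mathcal{P}$ to $w_{2}\mathcal{P}$ inside the strip that avoids $x\mathcal{P}$ must circumvent that tile, either by pivoting around a vertex of $x\mathcal{P}$ --- i.e., completing more than half of a cyclic rewriting of the fundamental relation, which Dehn's algorithm forbids for a geodesic --- or by crossing the trees rooted at the tiles across the remaining sides of $x\mathcal{P}$, which costs far more than the two steps through $x\mathcal{P}$ itself. Without this dichotomy (or an equivalent quantitative substitute) the second claim of the lemma is not established.
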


\begin{proof}
Since $\gamma$ jumps into, and then out of $x\mathcal{P}$ across
bounding geodesics $xL$ and $xL'$ that do not meet, the sides $xL''$
and $xL'''$ of $x\mathcal{P}$ adjacent to the side $xL'$ are distinct
from $xL$. Denote by $y\mathcal{P}$ and $z\mathcal{P}$ the tiles
adjacent to $x\mathcal{P}$ across these bounding geodesic lines $xL''$
and $xL'''$.  For each of these tiles $\tau $, at least one of the
four trees rooted at $\tau $ obtained by translation of the four
semigroups of Proposition~\ref{proposition:embeddedTrees} will lie
entirely in the intersections of the halfplanes \emph{interior} to
$xL$ and $xL'$, by Lemma~\ref{lemma:geodesics}. Therefore, each of the
tiles $y\mathcal{P}$ and $z\mathcal{P}$ is the root of a tree that
does not intersect $\gamma$. These trees, by construction, lie on
opposite sides of $\gamma$. This proves that $x$ is a barrier point.

Suppose now that $\gamma '$ is another  geodesic segment from $u$ to
$v$. If $\gamma '$ did not pass through the tile $x\mathcal{P}$, then
it would have to circumvent it by passing through one of the tiles
$y\mathcal{P}$ or $z\mathcal{P}$. To do this would require either that
it complete a relation or pass through $g$ trees. In either case, the
path $\gamma '$ could be shortened by going through $x\mathcal{P}$.
\end{proof}

To complete the proof of Proposition~\ref{proposition:barriers} it
remains to show that the successive barriers along $\gamma$
constructed above are pairwise disjoint. But the attached trees at the
tiles $y\mathcal{P}$ and $z\mathcal{P}$ were chosen in such a way that
each lies entirely in the intersections of the halfplanes
\emph{interior} to $xL$ and $xL'$. The past and future segments of
$\gamma$ lie in the \emph{exteriors}. Hence, at each new barrier along
(say) the future segment, the attached trees will lie in halfplanes
contained in these exteriors, and so will not intersect the barrier at
$x\mathcal{P}$.

\qed

\subsection{Ancona's inequality}\label{ssec:anconaInequality} The
Ancona inequalities \eqref{eq:ancona} state that the major
contribution to the Green's function $G_{R} (x_{0},x_{m})$ comes from
random walk paths that pass within a bounded distance of $x_{n}$. To
prove this it suffices, by Lemma~\ref{lemma:cutPoint}, to
show that \eqref{eq:anconaF} holds for \emph{cut points}
$x_{m}$. The key to this is that a path from $x_{0}$ to $x_{m}$ that
does \emph{not} pass within distance $n$ of $x_{m}$ must cross
$g^{n-1}$ trees of outdegree $g$.

\begin{lemma}\label{lemma:walkabout}
Let $\gamma$ be a geodesic segment from $u$ to $v$ that passes through
the root vertex $1$, and suppose that vertex $1$ is a cut point
for $\gamma$. Assume that both $u,v$ are exterior to the sphere
\begin{equation}\label{eq:sphere}
	S_{n}:=\{x\in \Gamma \, : \, |x|=n\}
\end{equation}
of radius $n$ in the Cayley graph $G^{\Gamma}$ centered
at $1$.  If $F_{R}(1,x)\leq \beta$ for all vertices $x\not =1$, then
\begin{equation}\label{eq:walkaboutBound}
	G_{R} (u,v;G^{\Gamma}\setminus S_{n})
	\leq 2 (2\beta )^{g^{n-1}}.
\end{equation}
\end{lemma}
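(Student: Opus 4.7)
My plan is to combine the cut-point barrier structure from Lemma~\ref{lemma:cutPoint} with a multiplicative application of the tree-crossing bound in Lemma~\ref{lemma:treeCrossing}, using planarity of the surface-group Cayley graph to force any $S_n$-avoiding path from $u$ to $v$ to execute crossings of $g^{n-1}$ pairwise disjoint trees of outdegree $g$.

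I would first apply Lemma~\ref{lemma:cutPoint} at the cut point $1$: this produces two disjoint rooted trees $T,T'\subset G^{\Gamma}$ of outdegree $g$ attached to neighbors of $1$ on opposite sides of $\gamma$, and the barrier $T\cup T'\cup\{1\}$ disconnects the Cayley graph, separating $u$ from $v$. Reading ``exterior to $S_n$'' as $|u|,|v|>n$, any $S_n$-avoiding path from $u$ to $v$ cannot enter $B_{n-1}$ (since jumping between $B_{n-1}$ and the exterior of $B_n$ requires a step across $S_n$), so every candidate path stays entirely outside $B_n$. Consequently its interactions with the barrier occur only at depths $\ge n$ inside $T$ or $T'$.

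The next step is to identify, inside $T$, a collection of $g^{n-1}$ pairwise disjoint trees of outdegree $g$ that every such path must cross. For each of the $g^{n-1}$ depth-$(n-1)$ vertices $w$ of $T$ (all of which lie on $S_n$), let $T_w$ be the subtree of $T$ consisting of $w$ together with its descendants; this is a tree of outdegree $g$ rooted at $w$. The collection $\{T_w\}$ is pairwise disjoint. Using planarity together with the halfplane containment of Lemma~\ref{lemma:geodesics}, I would argue that any $S_n$-avoiding path from $u$ to $v$ whose interaction with the barrier occurs on the $T$-side must cross every $T_w$ in the sense of Lemma~\ref{lemma:treeCrossing}: each $T_w$, together with the two bounding geodesics of $\mathcal{P}$ that cage the wedge of $T$, sits as a nested separator between the $u$- and $v$-components of the exterior of $B_n$. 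A symmetric statement holds on the $T'$-side.

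Given this, Lemma~\ref{lemma:treeCrossing} gives $H_R(\cdot;T_w)\le 2\beta$ for each $T_w$ (the hypothesis $F_R(1,\cdot)\le\beta$ is homogeneous across the group, and the forbidden root only strengthens the bound). Since the $T_w$'s are pairwise disjoint, the subwalks of any $S_n$-avoiding path inside different $T_w$'s are disjoint, so multiplicativity of $w_R$ under concatenation yields the product bound $(2\beta)^{g^{n-1}}$ on the contribution through $T$; the symmetric bound on the $T'$-side gives the $2(2\beta)^{g^{n-1}}$ total. The main obstacle I expect is the planar separation claim in the third paragraph---that every $S_n$-avoiding $u$-to-$v$ path through $T$ must cross \emph{every} $T_w$, not just some of them. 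This is where the planar structure of the surface-group Cayley graph (rather than just hyperbolicity) is essential; rigorously verifying it should proceed by an argument in the spirit of Proposition~\ref{proposition:barriers}, organising the $T_w$'s into a nested chain of planar separators via Lemma~\ref{lemma:geodesics} and the structure of the tessellation $\mathcal{T}$.
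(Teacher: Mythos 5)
Your proposal matches the paper's proof: the barrier at the cut point $1$ forces any $S_{n}$-avoiding path (which must stay outside $B_{n}$) to cross $T$ or $T'$, hence to cross the $g^{n-1}$ pairwise disjoint subtrees rooted at depth $n-1$, and Lemma~\ref{lemma:treeCrossing} together with multiplicativity of $w_{R}$ under concatenation yields the bound $2(2\beta)^{g^{n-1}}$. The only cosmetic difference is in the step you flag as the main obstacle: the paper obtains ``must cross every $T_{w}$'' not by exhibiting each $T_{w}$ as a separate planar separator (a single $T_{w}$ does not separate $u$ from $v$), but by unwinding the recursive definition of crossing --- once planarity forces a topological, hence combinatorial, crossing of $T$, the path cannot visit any subtree root lying in $B_{n}$, so the recursion propagates down to depth $n-1$.
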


\begin{proof}
Since both $u,v$ are exterior to $S_{n}$, the restricted Green's
function is the sum over all paths from $u$ to $v$ that do not enter
the sphere $S_{n}$ (recall definition \eqref{eq:fpgf}). Since $1$ is a
barrier point for $\gamma$, there are trees $T,T'$ of outdegree $g$
with roots adjacent to $1$ on either side of $\gamma$. A path from $u$
to $v$ that does not enter $S_{n}$ must cross either $T$ or $T'$, and
it must do so without passing within distance $n-1$ of the root. Thus,
it must cross $g^{n-1}$ disjoint subtrees of either $T$ or
$T'$. Consequently, the result follows from
Lemma~\ref{lemma:walkabout}.
\end{proof}

\begin{proposition}\label{proposition:ancona}
For all sufficiently large $g$, there exists $C=C_{g}<\infty$ such
that the 
Green's function of the simple random walk on the surface group
$\Gamma_{g}$ satisfy the Ancona inequalities: In particular, for every
geodesic segment $x_{0}x_{1}x_{2}\dotsb x_{m}$, every $1<n<m$, and
every $1\leq r\leq R$,
\begin{equation}\label{eq:anconaF}
	G_{r} (x_{0},x_{m})\leq C G_{r} (x_{0},x_{n})G_{r} (x_{n},x_{m}).
\end{equation}
\end{proposition}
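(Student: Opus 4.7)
I would approach this via a first-entry decomposition of paths from $x_0$ to $x_m$ at a small ball around a cut point near $x_n$, combining the Harnack inequalities with the walkabout bound from Lemma~\ref{lemma:walkabout}. First, by Proposition~\ref{proposition:barriers} and Lemma~\ref{lemma:cutPoint}, some cut point $c$ lies within distance $\kappa_g$ of $x_n$ along the geodesic, so the Harnack inequality \eqref{eq:harnack} lets me assume without loss that $x_n$ itself is a cut point, at the price of a multiplicative constant $C^{2\kappa_g}$.

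Fix an integer $k$ depending only on $g$, set $B_k := \{y : d(y, x_n) \le k\}$, and decompose every path from $x_0$ to $x_m$ according to its first entry into $B_k$:
\[
G_r(x_0, x_m) = G_r(x_0, x_m;\, \Gamma \setminus B_k) + \sum_{y \in \partial B_k} L_r(x_0, y)\, G_r(y, x_m),
\]
where $L_r(x_0, y)$ is the first-entry-to-$B_k$ generating function, dominated by $F_r(x_0, y) = G_r(x_0, y)/G_r(1,1)$. For each $y \in \partial B_k$ the Harnack inequality \eqref{eq:harnack} gives $G_r(x_0, y) \le C^k G_r(x_0, x_n)$ and $G_r(y, x_m) \le C^k G_r(x_n, x_m)$, so the sum is at most $|B_k|\, C^{2k}\, G_r(x_0, x_n)\, G_r(x_n, x_m)/G_r(1,1)$, already in the Ancona form. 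For the walkabout term, Proposition~\ref{proposition:vanishingF} lets me take $g$ large enough that $F_R(1, x) < \beta$ for all $x \neq 1$ with $\beta$ as small as I need, whereupon Lemma~\ref{lemma:walkabout} gives $G_r(x_0, x_m;\, \Gamma \setminus B_k) \le 2(2\beta)^{g^{k-1}} =: \epsilon_k$, uniformly in $r \le R$.

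The hard part is absorbing the absolute constant $\epsilon_k$ into the bound $C\, G_r(x_0, x_n)\, G_r(x_n, x_m)$, since the product on the right can be exponentially small in $d(x_0, x_m)$. Short geodesics are handled directly by the Harnack lower bound on the product. For long geodesics I would iterate the tree-crossing argument underlying Lemma~\ref{lemma:walkabout} at a sequence of cut points $c_1, \dotsc, c_N$ along the geodesic supplied by Proposition~\ref{proposition:barriers}, with $N \asymp d(x_0, x_m)/\kappa_g$ and spaced so that the balls $B_k(c_i)$ are pairwise disjoint: a path avoiding \emph{every} $B_k(c_i)$ must cross $g^{k-1}$ disjoint subtrees at each of the $N$ barriers, and multiplicativity of the weight function $w_r$ yields the refined estimate $\epsilon_k^{N}$, which for $k$ fixed once and for all sufficiently large decays in $d(x_0, x_m)$ strictly faster than the Harnack lower bound $(4g)^{-d(x_0, x_m)}$ on $G_r(x_0, x_n)\, G_r(x_n, x_m)$. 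Paths avoiding $B_k(x_n)$ but entering some other $B_k(c_i)$ should be handled by applying the main first-entry decomposition at $c_i$, driving an induction on $d(x_0, x_m)$ that closes the estimate. Uniformity in $r \le R$ is automatic because $\beta$, the Harnack constant $C$, and the factor $G_r(1,1) \le G_R(1,1) < \infty$ are all independent of $r$.
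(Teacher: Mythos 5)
Your setup agrees with the paper's up to the reduction to cut points, and your ``main term'' is handled cleanly: with the ball centered at $x_{n}$ and of fixed radius $k$, Harnack alone puts the first-entry sum in the Ancona form, with no need for an inductive constant. But the decisive step is different from the paper's, and as written it has a gap. With a ball of \emph{fixed} radius $k$, the walkabout term $G_{r}(x_{0},x_{m};\Gamma\setminus B_{k})\leq\epsilon_{k}$ is an absolute constant, while the right side of \eqref{eq:anconaF} can be as small as $\varrho^{2m}$; you correctly identify this as the hard part, but your resolution does not close. The bound $\epsilon_{k}^{N}$ covers only paths avoiding \emph{every} ball $B_{k}(c_{i})$ (and even that requires a multi-barrier extension of Lemma~\ref{lemma:walkabout}, justifiable via the disjointness in Proposition~\ref{proposition:barriers} but not stated in the paper). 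For paths that avoid $B_{k}(x_{n})$ yet enter some $B_{k}(c_{i})$, the first-entry decomposition at $c_{i}$ produces, after Harnack and supermultiplicativity, a sum over the $N\asymp m/\kappa_{g}$ barriers in which each term is of the same order as the quantity you are trying to bound at a comparable distance; the natural recursion is therefore of the form $D_{m}\lesssim N\cdot\max_{m'<m}D_{m'}+\dotsb$, whose iterates grow rather than stabilize. You would need a quantitative gain at each such decomposition (e.g., a genuinely shorter remaining geodesic, or a small factor per barrier crossed) that the sketch does not supply.

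The paper avoids this entirely by letting the ball radius grow with $m$: it centers a ball of radius $\sqrt{m}$ at a \emph{second} cut point $w$ located at roughly $0.75\,m$ along the geodesic. Then the walkabout term is bounded by $2\alpha^{g^{\sqrt{m}}}$ (Lemma~\ref{lemma:walkabout}), which is \emph{doubly} exponentially small and hence negligible against the Harnack lower bound $\varrho^{2m}$ on $G_{r}(u,1)G_{r}(1,v)$ with no iteration over barriers at all. The price is that the points $z$ on the sphere of radius $\sqrt m$ around $w$ are no longer within bounded distance of the cut point, so relating $G_{r}(u,z)$ to $G_{r}(u,1)G_{r}(1,z)$ requires the Ancona constant at the shorter scale $[.9m]$; this yields the bootstrap $C_{m}\leq(1+2\alpha^{g^{\sqrt m}}/\varrho^{2m})\,C_{[.9m]}$, and boundedness of $C_{m}$ follows because the correction factors form a convergent product. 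If you want to salvage your fixed-radius scheme, the cleanest repair is to import exactly this device: replace $k$ by a radius growing with $m$ so that the single-barrier walkabout estimate already beats $\varrho^{2m}$, and accept the induction on scale that this forces in the main term.
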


\begin{proof}
It is certainly true that for each distance $m<\infty$ there is a
constant $\infty >C_{m}\geq 1$ so that \eqref{eq:anconaF} holds for
all geodesic segments of length $m$, because (by homogeneity of the
Cayley graph) there are only finitely many possibilities. The problem
is to show that the constants $C_{m}$ remain bounded as $m \rightarrow
\infty$.

As noted above, it suffices to consider only \emph{cut points}
$x_{n}$ along the geodesic segment $\gamma$. For ease of notation,
assume that $\gamma $ has been translated so that the cut point
$x_{n}=1$ is the root vertex of the Cayley graph, and write
$u=x_{0}$ and $v=x_{m}$ for the initial and terminal points. Assume
also that $d (u,1)\leq m/2$; this can be arranged by switching the
endpoints $u,v$, if necessary. Thus, there is a cut point $w$ on the
geodesic segment between $1$ and $v$ so that $.7 m\leq d (u,w)\leq .8$.
Let $S=S_{\sqrt{m}} (w)$ and $B=B_{\sqrt{m}} (w)$ be the sphere  and
ball, respectively, of common radius $\sqrt{m}$, centered at $w$.
Any path from $u$ to $v$ (or any path from $1$ to $v$) must either
pass through the ball $B$ or not; hence
\[
	G_{r} (u,v)=G_{r} (u,v;B^{c})+\sum_{z \in S} G_{r} (u,z)G_{r} (z,v;B^{c}).
\]
If $m$ is sufficiently large that $\sqrt{m}<.1 m$, then any point
$z\in S$ must be at distance
\[
	.6m \leq d (u,z)\leq .9 m
\]
from $u$. Moreover, by Lemma~\ref{lemma:cutPoint}, every geodesic segment from
$u$ to $z$  must pass through $1$. (This follows because $1$ is a cut point
for $\gamma$.) Similarly, since $w$ is also a cut point, every geodesic
segment from $1$ to $v$ passes through $w$. Consequently, for every
$z\in S$,
\[
	G_{r} (u,z)\leq C_{[.9m]} G_{r} (u,1)G_{r} (1,z).
\]
By Lemma \ref{lemma:walkabout}, 
\[
	G_{r} (u,v;B^{c})\leq G_{R} (u,v;B^{c})\leq  2\alpha^{g^{\sqrt{m}}} 
\]
where $\alpha =2\beta <1/2$, provided the
genus $g$ is sufficiently large.  On the other hand, the Harnack
inequalities ensure that for some $\varrho >0$ and all $r\geq 1$
\begin{align*}
	G_{r} (u,1)&\geq \varrho^{m} \quad \text{and}\\
	G_{r} (1,v)&\geq \varrho^{m} 
\end{align*}
Therefore,
\begin{align*}
	G_{r} (u,v)&=G_{r} (u,v;B^{c})+\sum_{z \in S} G_{r} (u,z)G_{r} (z,v;B^{c})\\
	      &\leq 2\alpha^{g^{\sqrt{m}}} 
	       +C_{[.9m]}\sum_{z \in S}G_{r} (u,1) G_{r} (1,z)G_{r} (z,v;B^{c})\\
	       &\leq 2\alpha^{g^{\sqrt{m}}}  +
	       C_{[.9m]} G_{r} (u,1) G_{r} (1,v)\\
	       &\leq (1+2\alpha^{g^{\sqrt{m}}}/\varrho^{2m})
	        C_{[.9m]} G_{r} (u,1) G_{r} (1,v).
\end{align*}
This shows that 
\[
	C_{m}\leq (1+2\alpha^{g^{\sqrt{m}}}/\varrho^{2m})
	        C_{[.9m]} ,
\]
and it now follows routinely that the constants $C_{m}$ remain bounded
as $m \rightarrow \infty$.
\end{proof}

\section{Automatic structure}\label{sec:cannon}

\subsection{Strongly Markov groups and
hyperbolicity}\label{ssec:strongMarkov}

A finitely generated group $\Gamma $ is said to be \emph{strongly Markov}
(fortement Markov -- see \cite{ghys-deLaHarpe}) if for each finite,
symmetric generating set $A$ there exists a finite directed graph 
$\mathcal{A}= (V,E,s_{*})$ with distinguished vertex $s_{*}$ (``start'') and a
labeling $\alpha :E \rightarrow A$ of edges by generators that meets
the following specifications. Let 
\begin{equation}\label{eq:automatonPaths}
	\mathcal{P}:=\{\text{finite paths  in $\mathcal{A}$ starting at $s_{*}$} \},
\end{equation}
and for each path $\gamma =e_{1}e_{2}\dotsb e_{m}\in \mathcal{P}$, denote by 
\begin{align}\label{eq:inducedPath}
\alpha (\gamma)&= \text{path in $G^{\Gamma }$ through $1,\alpha
	(e_{1}),\alpha (e_{1})\alpha (e_{2}),\dotsc$}, \quad \text{and}\\
\notag 
	\alpha_{*} (\gamma)&=\text{right endpoint of } \;\alpha (\gamma)
		   =\alpha (e_{1})\alpha (e_{2}) \dotsb \alpha(e_{m}).
\end{align}

\begin{definition}\label{definition:automaticStructure}

The labeled automaton $(\mathcal{A},\alpha )$ is a strongly Markov
automatic structure for $\Gamma$ if:

\begin{enumerate}
\item [(A)] No edge $e\in E$ ends at $s_{*}$.
\item [(B)] Every vertex $v\in V$ is
accesssible from the start state $s_{*}$.
\item [(C)]  For every path $\gamma \in
\mathcal{P}$, the path $\alpha (\gamma)$ is a geodesic path in $G^{\Gamma}$.
\item [(D)]  The  endpoint mapping $\alpha_{*}:\mathcal{P}\rightarrow
\Gamma $ induced by $\alpha$ is a bijection of
$\mathcal{P}$ onto $\Gamma$.
\end{enumerate}
\end{definition}

 \begin{theorem}\label{theorem:cannon}
Every word hyperbolic group is strongly Markov.
\end{theorem}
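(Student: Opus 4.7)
The plan is to build the automaton $\mathcal{A}$ from Cannon's notion of cone types together with a canonical choice of geodesic word for each group element. For each $g\in\Gamma$, define the cone type
\[
	C(g) := \{h \in \Gamma : |gh| = |g| + |h|\},
\]
the set of suffixes that extend a geodesic ray from $1$ through $g$ into a longer geodesic. The central step, Cannon's finiteness lemma, asserts that in a word-hyperbolic group there are only finitely many distinct cone types. I would prove this by showing that $C(g)$ is determined by the isomorphism type (as a labeled graph) of the ball of radius $2\delta+1$ around $g$ in $G^{\Gamma}$, with each vertex $x$ labeled by $d(1,x)-|g|\in\{-(2\delta+1),\dots,2\delta+1\}$. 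Given two elements $g,g'$ with isomorphic such labeled balls, any $h\in C(g)$ can be transported to $C(g')$ by $\delta$-thinness: the geodesic triangle with vertices $1,g,gh$ forces every geodesic from $1$ to $gh$ to pass within $\delta$ of $g$, and the local labeled ball records exactly the data needed to mimic this on the $g'$ side. Since the number of finite labeled balls of bounded radius is finite, so is the set of cone types.

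To secure the bijection condition (D), I would fix a total order on $A$ and use the ShortLex geodesic representative $w(g)$ of each element; then every $g$ has a unique geodesic spelling. Whether $w(g)a$ is again ShortLex-minimal for $ga$ depends only on $C(g)$ together with a bounded amount of local combinatorial data, so refining cone types to \emph{ShortLex cone types} $\widehat{C}(g)$ preserves finiteness.

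With these ingredients I construct $\mathcal{A}=(V,E,s_{*})$: let $V := \{s_{*}\}\cup\{\widehat{C}(g):g\in\Gamma\}$. From $s_{*}$, put an edge labeled $a$ into $\widehat{C}(a)$ for each $a\in A$, and no edges into $s_{*}$. For each $\widehat{C}$ and each $a\in A$, add an $a$-labeled edge from $\widehat{C}$ to $\widehat{C}'$ whenever some (equivalently, every) $g$ with $\widehat{C}(g)=\widehat{C}$ satisfies $|ga|=|g|+1$ and $w(ga)=w(g)\,a$; the target $\widehat{C}'=\widehat{C}(ga)$ is independent of the choice of $g$ by the locality property of cone types. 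Axiom (A) holds by construction; (B) because the ShortLex geodesic $w(g)$, read letter by letter from $s_{*}$, traces a path to $\widehat{C}(g)$, so every vertex is reached; (C) because any path from $s_{*}$ spells a ShortLex word, hence is geodesic in $G^{\Gamma}$; (D) because the inverse map $g\mapsto w(g)$ yields a well-defined path in $\mathcal{P}$, so $\alpha_{*}$ is a bijection.

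The main obstacle is Cannon's finiteness lemma: it is the only place word-hyperbolicity is used essentially, and the argument requires care in tracking how $\delta$-thinness transfers the geodesic-extension property between elements sharing the same local configuration. Everything downstream --- the ShortLex refinement to convert the surjection onto $\Gamma$ into a bijection, and the four axioms for $(\mathcal{A},\alpha)$ --- is routine bookkeeping once finiteness is in hand.
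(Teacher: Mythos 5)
Your proposal is correct and follows essentially the same route as the source the paper cites for this theorem (Ghys--de la Harpe, Ch.~9, Th.~13, after Cannon): finiteness of cone types, established by showing the cone type is determined by labeled ball data of radius comparable to $\delta$, followed by selection of ShortLex geodesic representatives to convert the geodesic automaton into one satisfying the bijectivity axiom (D). The paper itself offers no proof, only the citation; the one step you dismiss as ``routine bookkeeping''---that ShortLex-minimality of $w(g)\,a$ is detectable from a bounded amount of local data, so that ShortLex cone types remain finite and propagate deterministically---is exactly where one needs the uniform ($2\delta$) fellow-traveling of geodesic words with the same endpoints to run a finite-state word-difference comparison, so it deserves the same care you give the finiteness lemma.
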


See \cite{ghys-deLaHarpe}, Ch.~9, Th.~13. The result is essentially
due to Cannon (at least in a more restricted form) --- see
\cite{cannon:1},  \cite{cannon:3} --- and in important
special cases (cocompact Fuchsian groups) to Series
\cite{series}. Henceforth, we will call the directed graph
$\mathcal{A}= (V,E,s_{*})$ the \emph{Cannon automaton} (despite the
fact that it is not quite the same automaton as constructed in
\cite{cannon:1}).

\subsection{Automatic structures for the surface
groups}\label{ssec:autoSurface}

The existence of an automatic structure will be used to connect the
behavior of the Green's function at infinity to the theory of Gibbs
states and Ruelle operators (see \cite{bowen}, ch.~1). For these
arguments, it is not important that the group $\Gamma$ be a surface
group; only the conclusions of
Theorem~\ref{theorem:holderMartinKernel} and
Corollary~\ref{corollary:bi} will be needed. Nevertheless, we note
here that an automatic structure $\mathcal{A}$ for the surface group
$\Gamma_{g}$ is easily constructed. Let $A=A_{g}=\{a_{i}^{\pm},
b_{i}^{\pm}\}$ be the standard generating set, with the generators
satisfying the basic relation \eqref{eq:surfaceRelations}. Define the
set $V$ of vertices for the automaton to be the set of all reduced
words in the generators of length $\leq 2g$, with $s_{*}=$ the empty
word. Directed edges are set according to the following rules:
\begin{itemize}
\item [(A)] If a (reduced) word $w'$ is obtained by adding a single
letter $x$ to the end of word $w$, then draw an edge $e (w,w')$ from
$w$ to $w'$, and label it with the letter $x$.
\item [(B)] If a word $w'$ of maximal length $2g$ is
obtained from another word $w$ of length $2g$ by deleting the first
letter and adding a new letter $x$ to the end, then draw an edge $e
(w,w')$ from $w$ to $w'$ with label $x$ \emph{unless} the word $wx$
constitutes the first $2g+1$ letters of a cyclic permutation of the
basic relation \eqref{eq:surfaceRelations}. 
\end{itemize}
That properties (C)--(D) of
Definition~\ref{definition:automaticStructure} are satisfied follows
from Dehn's algorithm. The words of maximal length $2g$ are the
\emph{recurrent vertices} of this automaton, while the words of length
$<2g$ are the \emph{transient vertices} (see
sec.~\ref{ssec:restrictions} below for the definitions). It is
easily verified that for any vertex $w$ and any \emph{recurrent} vertex
$w'$, there is a path in the automaton from $w$ to $w'$.

\subsection{Recurrent and transient vertices}\label{ssec:restrictions} 

Let $\mathcal{A}$ be a Cannon automaton for the group $\Gamma$ with
vertex set $V$ and (directed) edge set $E$.  Call a vertex $v\in V$
\emph{recurrent} if there is a path in $\mathcal{A}$ of length $\geq
1$ that begins and ends at $v$; otherwise, call it \emph{transient}.
Call a vertex $v$ \emph{terminal} if there is no directed edge leading
out of $v$. Denote by $\mathcal{A}_{R}$ the restriction
of the digraph $\mathcal{A}$ to the set
$\mathcal{R}$ of recurrent vertices. For certain hyperbolic groups ---
among them the surface groups --- the automatic structure can be
chosen so that the digraph $\mathcal{A}_{R}$ is \emph{connected} (see
\cite{series}) and has no terminal vertices.  Henceforth we restrict
attention to word-hyperbolic groups with this property:

\begin{assumption}\label{assumption:irreducibility}
The automatic structure can be chosen so that the digraph
$\mathcal{A}_{R}$ is connected, and so that there are no terminal vertices.
\end{assumption}

Assumption \ref{assumption:irreducibility} implies that every path
$\gamma \in \mathcal{P}$ beginning at the vertex $s_{*}$ has the form
\begin{equation}\label{eq:transient-recurrent}
\gamma =\tau \varrho
\end{equation}
where $\tau$ is a path of length $\geq 1$ in the set of transient
vertices whose last edge connects a transient vertex to a recurrent
vertex $v_{\tau }$, and $\varrho$ is a path in the set of recurrent
vertices that starts at $v$.  There are only finitely many possible
transient prefixes $\tau$, since no transient vertex can be visited
twice by a path $\gamma$.

\begin{assumption}\label{assumption:mixing}
The incidence matrix of the digraph $\mathcal{A}_{R}$ is aperiodic.
\end{assumption}

Both assumptions hold for all surface
groups. Assumption~\ref{assumption:mixing} is for ease of exposition
only --- the results and arguments below can be modified to account
for any periodicities that might arise if the assumption were to
fail. Assumption~\ref{assumption:irreducibility}, however, is
essentially important.  Given
Assumptions~\ref{assumption:irreducibility}--\ref{assumption:mixing} ,
a further simplification of the automaton is possible:

\begin{lemma}\label{lemma:tp}
The automaton $\mathcal{A}$ can be modified so that all transient
prefixes $\tau$ are of the same length $K$. Moreover, the modification
can be made in such a way that for every  path $\varrho$ in the set of
recurrent vertices there is at least one transient prefix $\tau$ such
that the concatenation $\tau \varrho$ is a path in the automaton
$\mathcal{A}$. 
\end{lemma}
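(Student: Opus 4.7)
The plan is to unfold the first $K$ levels of the automaton $\mathcal{A}$ into a tree, for a suitable integer $K$, while retaining the recurrent subautomaton $\mathcal{A}_R$ unchanged. The resulting modified automaton $\mathcal{A}'$ will have all transient prefixes of length exactly $K$ by construction, and every recurrent vertex will appear as the endpoint of some such prefix.

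I would proceed as follows. First, let $L<\infty$ denote the maximum length of a transient prefix in $\mathcal{A}$ (finite, since the transient vertex set is finite and no transient vertex is revisited on any path from $s_{*}$). By Assumption~\ref{assumption:irreducibility} the recurrent digraph $\mathcal{A}_R$ is strongly connected with no terminal vertex, and by Assumption~\ref{assumption:mixing} its incidence matrix is aperiodic; standard Perron--Frobenius theory then furnishes an integer $N_{0}$ such that for any two recurrent vertices $u,v$ there is a path in $\mathcal{A}_R$ from $u$ to $v$ of every length $\geq N_{0}$. Fix any $K\geq L+N_{0}$.

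Next, define $\mathcal{A}'$ with vertex set $V'=\{s_{*}'\}\cup\mathcal{T}'\cup\mathcal{R}$, where $\mathcal{T}'$ consists of one ``tree vertex'' $[\gamma]$ for each non-trivial path $\gamma$ in $\mathcal{A}$ starting at $s_{*}$ of length at most $K-1$. Install edges (all inheriting labels from $\mathcal{A}$): $[\gamma]\to[\gamma e]$ for each edge $e$ in $\mathcal{A}$ out of the endpoint of $\gamma$ when $|\gamma|<K-1$ (reading $[\emptyset]=s_{*}'$); $[\gamma]\to v$ for each such $e$ when $|\gamma|=K-1$, where $v$ is the endpoint of $e$ and lies in $\mathcal{R}$ because $K-1\geq L$; and the entire edge set of $\mathcal{A}_R$. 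Structurally this makes the transient part of $\mathcal{A}'$ a tree of depth $K-1$ all of whose depth-$(K-1)$ edges lead into $\mathcal{R}$.

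The verification splits into two parts. Using the fact, asserted by the decomposition \eqref{eq:transient-recurrent}, that once a path from $s_{*}$ in $\mathcal{A}$ enters $\mathcal{R}$ it cannot leave, the natural map sending a path in $\mathcal{A}'$ to the path in $\mathcal{A}$ with the same label sequence is a length-preserving bijection between path sets; properties (A)--(D) of Definition~\ref{definition:automaticStructure} therefore transfer from $\mathcal{A}$ to $\mathcal{A}'$, and every transient prefix of $\mathcal{A}'$ has length exactly $K$ by construction. For the moreover clause, given any recurrent path $\varrho$ starting at $v\in\mathcal{R}$, pick any transient prefix $\tau_{0}$ of $\mathcal{A}$ of some length $\ell\leq L$ ending at a recurrent vertex $w$; aperiodicity and $K-\ell\geq N_{0}$ supply a path in $\mathcal{A}_R$ from $w$ to $v$ of length exactly $K-\ell$, whose concatenation with $\tau_{0}$ lifts to a transient prefix $\tau$ of $\mathcal{A}'$ with endpoint $v$, so that $\tau\varrho$ is a path in $\mathcal{A}'$.

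I expect the only genuinely non-trivial point to be the existence, for every recurrent vertex $v$, of a path of length \emph{exactly} $K$ from $s_{*}$ to $v$ in $\mathcal{A}$: this is precisely where aperiodicity is needed, since strong connectivity alone would only provide paths whose admissible lengths populate a non-trivial arithmetic progression.
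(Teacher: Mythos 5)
Your proof is correct and takes essentially the same approach as the paper's: unfold the transient part of $\mathcal{A}$ into a tree of path-prefixes of one fixed depth $K$, leave $\mathcal{A}_R$ and its edges untouched, and invoke aperiodicity (primitivity) of $\mathcal{A}_R$ to choose $K$ large enough that every recurrent vertex is the endpoint of some length-$K$ prefix. The paper performs the same replacement of transient vertices by paths and appeals to the same uniform-length connecting paths supplied by Assumption~\ref{assumption:mixing}.
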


\begin{proof}
The recurrent vertices of the modified automaton will be the same as
in the original, as will the edges among them; only the set of
transient vertices will be modified. Thus, the digraph
$\mathcal{A}_{R}$ will not be changed.  Let $K$ be the length of the
longest transient prefix.  Replace the set $T$ of transient vertices
by the set $T^{*}$ of \emph{paths} of length $J\leq K$ in the
automaton that start at
transient vertices. Note that any such path of length $K$ must end in
a recurrent vertex. For any two paths $\gamma ,\gamma '\in T^{*}$, draw
an edge from $\gamma$ to $\gamma '$ if $\gamma '$ is obtained by
adding a single vertex to the end of $\gamma$.  For any path $\gamma
\in T^{*}$ of length $K$ and any recurrent vertex $v$, draw an edge from
$\gamma$ to $v$ if there is an edge in the original automaton
$\mathcal{A}$ from the recurrent vertex $v'$ at the end of $\gamma$ to
$v$.  Finally, construct the edge-labeling $\alpha' :E' \rightarrow A$
by projection: for instance, if
\begin{align*}
	\gamma &=v_{1}v_{2}\dotsb v_{J} \quad \text{and}\\
	\gamma '&=v_{1}v_{2}\dotsb v_{J}v_{J+1}
\end{align*}
then label the edge from $\gamma$ to $\gamma '$ the same way that the
edge from $v_{J}$ to $v_{J+1}$ was labelled in the original automaton.

A similar argument can be made to prove the second assertion. By
Assumption~\ref{assumption:mixing}, there exists an integer $L\geq 1$
so that for any two recurrent vertices $v,w$ there is a path of length
$L$ from $v$ to $w$. Replace the transient vertices of the original
automaton by paths of length $K+L$ that start at $s_{*}$, and modify
the edges as in the preceding paragraph.
\end{proof}

\subsection{Symbolic dynamics}\label{ssec:symbolicDynamics} We shall
assume for the remainder of the paper that the automaton $\mathcal{A}$
has been chosen so that all transient prefixes have the same length
$K$, and so that for every path $\varrho$ in the set of recurrent
vertices there is at least one transient prefix $\tau$ such that the
concatenation $\tau \varrho$ is a path in the automaton. Let
$\mathcal{R}$ be the set of recurrent vertices of $\mathcal{A}$, and
for each transient prefix $\tau$ denote by $v_{\tau}$ the terminal
vertex of $\tau$ (which is necessarily recurrent).
 Set
\begin{align*}
	\Sigma &=
	 \{\text{semi-infinite  paths in} \;\mathcal{R} \},\\ 
       \Sigma_{\tau}&= \{\text{semi-infinite  paths in} \;\mathcal{R}\;
       \text{that begin at} \; v_{\tau} \}, \\
       \tilde{\Sigma }&=\{\text{bi-infinite  paths in} \;\mathcal{R} \},\\ 
	\Sigma^{n}&=\{\text{paths of length $n$ in}\; \mathcal{R}\},\\
	\Sigma^{n}_{\tau}&=\{\text{paths of length $n$ in}\;
	\mathcal{R} \; \text{that begin at} \; v_{\tau}\}, \quad \text{and}\\
	\Sigma^{*}&=\cup_{n=0}^{\infty}\Sigma^{n},
\end{align*}	
and let $\sigma $ be the forward shift operator. The spaces $\Sigma^{*}\cup
\Sigma$ and $\tilde{\Sigma}$ are given metrics in the usual way, that is,
\begin{equation}\label{eq:sigma-metric}
	d (\omega ,\omega ')=2^{-n (\omega ,\omega ')}
\end{equation}
where $n (\omega ,\omega ')$ is  the maximum integer $n$
such that $\omega_{j}=\omega '_{j}$ for all $|j|<n$). With the
topology induced by $d$ the space $\Sigma$ is a Cantor set, and
$\Sigma $ is the set of accumulation points of $\Sigma^{*}$.  Observe
that, relative to the metrics $d$, H\"{o}lder-continuous real-valued
functions on $\Sigma$ pull back to H\"{o}lder-continuous functions on
$\tilde{\Sigma}$.

The sets $\Sigma_{\tau}$ need not be pairwise disjoint, but their
union is $\Sigma$. For each $\omega \in \Sigma_{\tau}$  the
concatenation $\tau \omega$ is an infinite path in $\mathcal{A}$
beginning at $s_{*}$, and hence  projects via the edge-labeling map
$\alpha $ to a geodesic ray in $G^{\Gamma}$ starting at the vertex $1$
(more precisely, the sequence of finite prefixes of $\tau \omega$
project to the vertices along a geodesic ray). Each geodesic ray in
$G^{\Gamma}$ must converge in the Gromov topology to a point of
$\partial \Gamma$, so $\alpha$ induces on each $\Sigma_{\tau}$ a
mapping to $\partial \Gamma$. By construction, this mapping is
H\"{o}lder continuous relative to any visual metric on $\partial
\Gamma$. Each point $\zeta \in \partial \Gamma$ is the limit of a
geodesic ray corresponding to a semi-infinite path $\tau \omega$ in
$\mathcal{A}$ that begins at $s_{*}$, so $\partial \Gamma$ is the
union of the images of the sets $\Sigma_{\tau}$.

In a somewhat different way, the edge-labeling map $\alpha$ determines
a map from the space $\tilde{\Sigma}$ to the set of two-sided
geodesics in $G^{\Gamma}$ that pass through the vertex $1$. This map
is defined as follows: if $\omega \in \tilde{\Sigma}$ then the image
of $\omega$ is the two-sided geodesic that passes through
\[
	\dotsb , \alpha (\omega_{-1}^{-1})\alpha (\omega_{0})^{-1},
	\alpha (\omega_{0}^{-1}),1,\alpha (\omega_{1}),\alpha
	(\omega_{1}) \alpha (\omega_{2}), \dotsc ,
\]
equivalently, it is the concatenation of the geodesic rays starting at
$1$ that are obtained by reading successive steps from the sequences
\[
	\omega_{1}\omega_{2}\omega_{3}\dotsb 
	\quad \text{and}\quad 
	\omega_{0}^{-1}\omega_{-1}^{-1}\omega_{-2}^{-1}\dotsb ,
\]
respectively. Each of these geodesic rays converges to a point of
$\partial \Gamma$, so $\alpha$ induces a  mapping
from $\tilde{\Sigma}$ into $\partial \Gamma \times \partial
\Gamma$. This mapping is neither injective nor surjective, but it is
H\"{o}lder-continuous. 

Since every transient prefix has length $K$, for each
$m\geq K$ the sphere $S_{m}$ of radius $m$ in $G^{\Gamma}$ is in
one-to-one correspondence with
\begin{equation}\label{eq:sphereDecomp}
	\bigcup_{\tau}\Sigma^{m-K}_{\tau}
\end{equation}
where the union $\cup_{\tau}$ is over all transient prefixes $\tau$.

\begin{corollary}\label{corollary:sphereGrowth}
Let $T$ be the incidence matrix of the digraph $\mathcal{A}_{R}$, and
let $\zeta $ its spectral radius. If
Assumptions~\ref{assumption:irreducibility} and
\ref{assumption:mixing} hold, then $\zeta >1$, and there exists $C>0$
such that
\begin{equation}\label{eq:regularSphereGrowth}
	|S_{m}|\sim C\zeta^{m} \quad \text{as} \;\; m \rightarrow \infty .
\end{equation}
\end{corollary}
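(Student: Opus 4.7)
\medskip\noindent\textbf{Proof plan for Corollary~\ref{corollary:sphereGrowth}.}
The plan is to reduce the counting of sphere vertices to counting paths in the finite digraph $\mathcal{A}_{R}$ and then apply the Perron--Frobenius theorem to its incidence matrix $T$. By the decomposition \eqref{eq:sphereDecomp} and the bijectivity of $\alpha_{*}$ (Definition~\ref{definition:automaticStructure}(D)), for every $m\geq K$ we have
\[
	|S_{m}|=\sum_{\tau} |\Sigma^{m-K}_{\tau}|
\]
where the sum is over the finite set of transient prefixes $\tau$. Writing $\mathbf{1}$ for the all-ones vector and $\mathbf{e}_{v_{\tau}}$ for the standard basis vector at the recurrent vertex $v_{\tau}$, the number of paths of length $n$ in $\mathcal{A}_{R}$ starting at $v_{\tau}$ is exactly $\mathbf{e}_{v_{\tau}}^{T}T^{n}\mathbf{1}$, so
\[
	|S_{m}|=\sum_{\tau} \mathbf{e}_{v_{\tau}}^{T}T^{m-K}\mathbf{1}.
\]

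Next I would invoke Perron--Frobenius. By Assumption~\ref{assumption:irreducibility} the digraph $\mathcal{A}_{R}$ is connected (so $T$ is irreducible) and by Assumption~\ref{assumption:mixing} the incidence matrix is aperiodic. Hence $T$ has a simple leading eigenvalue $\zeta>0$ strictly greater in modulus than all others, with strictly positive left and right eigenvectors $\ell,h$, normalized so that $\ell^{T}h=1$. Standard Perron--Frobenius asymptotics give
\[
	T^{n}=\zeta^{n}\bigl(h\ell^{T}+O (\lambda^{n})\bigr)
\]
for some $\lambda/\zeta<1$. Substituting into the displayed formula for $|S_{m}|$ and using that there are only finitely many transient prefixes yields
\[
	|S_{m}|\sim \zeta^{m-K}\Bigl(\sum_{\tau} h (v_{\tau})\Bigr) (\ell^{T}\mathbf{1})
	=C\zeta^{m},
	\qquad
	C:=\zeta^{-K} (\ell^{T}\mathbf{1})\sum_{\tau}h (v_{\tau}),
\]
which is \eqref{eq:regularSphereGrowth}, and $C>0$ because $h$ and $\ell$ are strictly positive and the sum over $\tau$ is nonempty.

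It remains to show $\zeta>1$, and this is the only place that requires an additional input beyond Perron--Frobenius. Since $\Gamma$ is non-elementary word-hyperbolic, it contains a free subsemigroup (for the surface groups, this is already provided by Proposition~\ref{proposition:embeddedTrees}), and so has exponential growth: there is some $\eta>1$ with $|S_{m}|\geq \eta^{m}$ for all large $m$. Combined with the just-established bound $|S_{m}|\leq C'\zeta^{m}$, this forces $\zeta\geq \eta>1$. The main (mild) obstacle is this last point --- one must appeal to a coarse lower bound on the growth of the group to prevent the degenerate possibility $\zeta=1$; everything else is a direct application of the Perron--Frobenius theorem to the finite matrix $T$.
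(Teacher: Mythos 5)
Your proof is correct and follows essentially the same route as the paper: Perron--Frobenius applied to the irreducible aperiodic matrix $T$ gives the asymptotics \eqref{eq:regularSphereGrowth}, and $\zeta>1$ is forced by the exponential growth of $|S_{m}|$. The only (immaterial) difference is that the paper deduces exponential growth from nonamenability via the positive Cheeger constant, whereas you deduce it from the embedded free subsemigroups of Proposition~\ref{proposition:embeddedTrees}.
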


\begin{proof}
This follows directly from the Perron-Frobenius theorem, with the
exception of the assertion that the spectral radius $\zeta >1$. That
$\zeta >1$ follows from the fact that the group $\Gamma$ is
nonelementary. Since $\Gamma $ is nonelementary, it is nonamenable,
and so its Cayley graph has positive Cheeger constant; this implies
that $|S_{n}|$ grows exponentially with $n$.
\end{proof}

\begin{corollary}\label{corollary:positiveEntropy}
The shift $(\Sigma, \sigma)$ is
topologically mixing and has positive topological entropy.
\end{corollary}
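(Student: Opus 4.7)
The plan is to recognize $(\Sigma,\sigma)$ as a one-sided subshift of finite type on the alphabet $\mathcal{R}$, with admissible transitions encoded by the incidence matrix $T$ of the digraph $\mathcal{A}_{R}$. Once this is said, both conclusions reduce to well-known facts about SFTs applied to Assumptions~\ref{assumption:irreducibility} and \ref{assumption:mixing} and to Corollary~\ref{corollary:sphereGrowth}.

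For topological mixing, I would verify the defining property on basic open sets. The cylinders $[u_{0},u_{1},\dots ,u_{k}]$ and $[v_{0},v_{1},\dots ,v_{\ell}]$ determined by admissible finite paths in $\mathcal{A}_{R}$ form a basis, so it suffices to produce, for all sufficiently large $n$, a point of $\sigma^{-n}[v_{0},\dots ,v_{\ell}]\cap [u_{0},\dots ,u_{k}]$. Such a point is exactly an admissible semi-infinite path that reads $u_{0}\dots u_{k}$ in its initial positions and $v_{0}v_{1}\dots v_{\ell}$ starting at position $n$. Constructing it amounts to (i) finding an admissible $\mathcal{A}_{R}$-path of length $n-k$ from $u_{k}$ to $v_{0}$, and (ii) extending $v_{\ell}$ to a semi-infinite admissible tail. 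Step (i) is guaranteed for all large $n-k$ by Assumption~\ref{assumption:mixing}, which says $(T^{m})_{u_{k},v_{0}}>0$ for every pair of recurrent vertices once $m$ is large; step (ii) is guaranteed because Assumption~\ref{assumption:irreducibility} rules out terminal vertices, so every vertex has an outgoing edge.

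For positive topological entropy, I would use the standard identification $h_{\mathrm{top}}(\sigma)=\log\zeta$, where $\zeta$ is the spectral radius of $T$, for an irreducible aperiodic SFT. Concretely, $N_{n}:=\#\Sigma^{n}$ equals $\sum_{v,w}(T^{n})_{v,w}$, and by the Perron--Frobenius argument already invoked in Corollary~\ref{corollary:sphereGrowth} one has $N_{n}\asymp \zeta^{n}$, so the exponential growth rate of admissible words is $\log\zeta$. Corollary~\ref{corollary:sphereGrowth} already records that $\zeta>1$ (a consequence of the nonelementarity and hence nonamenability of $\Gamma$, via positivity of the Cheeger constant), so $h_{\mathrm{top}}(\sigma)=\log\zeta>0$.

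There is no serious obstacle here: the entire content has already been absorbed into the assumptions on $\mathcal{A}_{R}$ and into Corollary~\ref{corollary:sphereGrowth}. The mild subtlety to watch is the difference between a path existing in $\mathcal{A}_{R}$ of exactly length $n-k$ versus at least some length, and this is handled precisely by aperiodicity as opposed to mere irreducibility; this is the only place Assumption~\ref{assumption:mixing} is needed in the argument, while Assumption~\ref{assumption:irreducibility} is needed for the extendability in step (ii).
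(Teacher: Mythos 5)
Your proof is correct and follows the same route as the paper, which simply cites Assumptions~\ref{assumption:irreducibility} and \ref{assumption:mixing} for topological mixing and Corollary~\ref{corollary:sphereGrowth} (exponential growth of the group, hence $\zeta>1$) for positive entropy. You have merely written out in full the standard subshift-of-finite-type details that the paper leaves implicit.
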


\begin{proof}
Topological ergodicity follows from
Assumption~\ref{assumption:irreducibility}, and topological mixing
from Assumption~\ref{assumption:mixing}.    That the the subshift
$(\Sigma,\sigma)$ has positive topological entropy follows from the
exponential growth of the group,
cf. Corollary~\ref{corollary:sphereGrowth}.
\end{proof}

Corollary~\ref{corollary:positiveEntropy} ensures that the standing
hypotheses on the shift $(\Sigma ,\sigma)$ of Bowen \cite{bowen},
ch.~1, are satisfied. The machinery of thermodynamic formalism and
Gibbs states developed in \cite{bowen} applies to H\"{o}lder
continuous functions on $\Sigma$ (or on $\Sigma^{*}\cup \Sigma$). To
make use of this machinery, we will lift the Martin kernel from
$\partial \Gamma$ to the sequence space $\Sigma$. For this the results
of Theorem~\ref{theorem:holderMartinKernel} and
Corollary~\ref{corollary:bi} are crucial, as they  ensure that the
lift of the Martin kernel is H\"{o}lder continuous.
The lift is defined as follows. Fix $\omega \in \Sigma$, and let
\[
	\omega^{(n)}=\omega_{1}\dotsb \omega_{n}
\]
be a prefix of $\omega $. There exists at least one transient prefix
$\tau$  such that $\tau \omega^{(n)}$ is a path in the automaton.
(This path must begin at $s_{*}$.) Set
\[
	\varphi_{r} (\omega^{(n)}):=\log \frac{G_{r}
(\alpha_{*} (\tau ),\alpha_{*} (\tau
\omega^{(n)}))}{G_{r} (\alpha_{*} (\tau
\omega_{0}),\alpha_{*} (\tau \omega^{(n)}))};
\]
this ratio is independent of the choice of $\tau $,
since $\alpha_{*} (\tau)^{-1}\alpha_{*} (\tau
\omega_{1})$ is determined solely by the edge
$\omega_{1}$. Theorem~\ref{theorem:holderMartinKernel} implies that
the log ratio converges as $n \rightarrow \infty$, and that the limit
function
\begin{equation}\label{eq:defCocycle}
	\varphi_{r} (\omega ) =\lim_{n \rightarrow \infty}\varphi_{r} (\omega^{(n)} )
		    =\log \frac{K_{r} (\alpha_{*} (\tau ),\alpha_{*}
	(\tau \omega))}{K_{r} (\alpha_{*} (\tau  \omega_{1} ),\alpha_{*}
	(\tau \omega))}
\end{equation}
is H\"{o}lder continuous (relative to an exponent independent of $r$)
in $\omega$.  Furthermore, since the mapping $r\mapsto K_{r}
(x,\zeta)$ is continuous in the H\"{o}lder norm for some exponent
independent of $r$ (Theorem~\ref{theorem:holderMartinKernel}), the
mapping $r\mapsto \varphi_{r}$ is continuous relative to the
H\"{o}lder norm for functions on the sequence space $\Sigma $.  By
construction,
\begin{equation}\label{eq:cocycle}
	 G_{r} (\alpha_{*} (\tau \varrho),\alpha_{*} (\tau
	 \varrho\omega))
	 =\exp \{S_{n}\varphi_{r}(\omega) \}
\end{equation}
where (in Bowen's notation \cite{bowen}) 
\begin{equation*}
	 S_{n}\varphi :=\sum_{j=0}^{n-1}\varphi \circ \sigma^{j}.
\end{equation*}
 (Unfortunately, the notation $S_{n}\varphi$
conflicts with the notation $S_{m}$ for the sphere of radius $m$ in
$\Gamma$; however, both notations are standard, and the meaning should
be clear in the following by context.)

\section{Thermodynamic formalism}\label{sec:thermo} \subsection{Gibbs
states: background}\label{sec:gibbs} According to a fundamental
theorem of ergodic theory (cf. \cite{bowen}, Th.~1.2 and sec.~1.4),
for each H\"{o}lder continuous function $\varphi :\tilde{\Sigma
}\rightarrow \zz{R}$ there is a unique \emph{Gibbs state} $\mu
=\mu_{\varphi }$. A Gibbs state for the potential $\varphi$ is by
definition a shift-invariant probability measure $\mu$ on
$\tilde{\Sigma } $ for which there are constants $0<C_{1}<C_{2}<\infty
$ such that
\begin{equation}\label{eq:gibbs}
	C_{1}\leq \frac{\mu (\tilde{\Sigma }^{n} (\omega ))}{\exp\{S_{n}\varphi
	(\omega)-n \text{\rm Pressure} (\varphi )\}} \leq C_{2} 
\end{equation}
for all $n\geq 1$ and all $\omega  \in \Sigma $, where
\[
	\tilde{\Sigma }^{n} (\omega ):=\{\omega '\in \tilde{\Sigma } \,:\,
	\omega'_{j}=\omega_{j} \;\forall \,j\leq n \}
\]
and $\text{Pressure} (\varphi)$ denotes the topological pressure of
$\varphi$ (cf. \cite{bowen}, ch.~1, sec.~D).
Furthermore, if $\varphi (\omega)$ depends only on the forward
coordinates $\omega_{0}\omega_{1}\dotsb$ of $\omega$ (as is the case
for the functions $\varphi_{r}$ defined by \eqref{eq:defCocycle}) then
the Gibbs state $\mu_{\varphi }$ is related to the
Perron-Frobenius eigenfunction $h_{\varphi }$ and eigenmeasure
$\nu_{\varphi }$ of the Ruelle operator $\mathcal{L}_{\varphi}$
associated with $\varphi $ (cf. \cite{bowen}, ch.~1, sec.~C) by
\begin{equation}\label{eq:gibbsStateDecomp}
	d\mu_{\varphi }= h_{\varphi }d\nu_{\varphi }
\end{equation}
provided $h_{\varphi }$ and $\nu_{\varphi }$ are normalized so that
$\nu_{\varphi}$ and $h_{\varphi}\nu_{\varphi }$ both have total mass
$1$. This implies (by a standard argument in regular perturbation
theory) that the mapping $\varphi \mapsto \mu_{\varphi}$ is continuous
relative to the weak topology on measures and the H\"{o}lder topology
on functions. It follows that for the functions $\varphi_{r}$ defined
in sec.~\ref{ssec:symbolicDynamics} the measures
$\mu_{r}:=\mu_{\varphi_{r}}$ vary continuously with $r\in (0,R]$, and
that the constants $C_{1},C_{2}$ in \eqref{eq:gibbs} can be chosen to
be independent of $r\in [1,R]$. 

\begin{lemma}\label{lemma:gibbsCharacterization}
Let $\varphi :\Sigma \rightarrow \zz{R}$ be H\"{o}lder
continuous and let $\mu =\mu_{\varphi}$ be the Gibbs state on
$\tilde{\Sigma}$ with potential
function $\varphi$. There exists a positive, H\"{o}lder continuous 
function $\psi=\psi_{\varphi} :\tilde{\Sigma }\rightarrow
\zz{R}$ such that for every $\omega \in \tilde{\Sigma }$,
\begin{equation}\label{eq:gibbsCharacterization}
	\lim_{n \rightarrow \infty} \frac{\mu (\tilde{\Sigma}^{n} (\omega
	))\psi (\sigma^{n}\omega )^{-1} }{\exp\{S_{n}\varphi 
	(\omega)-n \text{\rm Pressure} (\varphi )\}} =h_{\varphi} (\omega),
\end{equation}
where $h_{\varphi}$ is the normalized Perron-Frobenius eigenfunction
of the Ruelle operator $\mathcal{L}_{\varphi}$.  Moreover, the
convergence is exponential, that is, there exist constants $C<\infty$
and $0<r<1$ such that for every $n\geq 1$ and every $\omega \in
\tilde{\Sigma }$,
\begin{equation}\label{eq:gibbsCharacterizationRate}
	\Bigl\lvert \frac{\mu (\Sigma^{n} (\omega ))\psi
	(\sigma^{n}\omega )^{-1} }{\exp\{S_{n}\varphi 
	(\omega)-n \text{\rm Pressure} (\varphi )\}}-h_{\varphi } (\omega)
	\Bigr\rvert \leq Cr^{n}. 
\end{equation}
\end{lemma}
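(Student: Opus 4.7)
The plan is to prove Lemma~\ref{lemma:gibbsCharacterization} by combining the spectral-gap form of the Ruelle--Perron--Frobenius (RPF) theorem for the transfer operator $\mathcal{L}_{\varphi}$ with a H\"older expansion of the cylinder measure. Since $\varphi$ depends only on the forward coordinates, the Gibbs state decomposes on the one-sided shift as $d\mu_{\varphi} = h_{\varphi}\, d\nu_{\varphi}$, where $\mathcal{L}_{\varphi} h_{\varphi} = e^{P(\varphi)} h_{\varphi}$, $\mathcal{L}_{\varphi}^{\ast}\nu_{\varphi} = e^{P(\varphi)}\nu_{\varphi}$, and $\int h_{\varphi}\,d\nu_{\varphi} = 1$. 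The RPF theorem (Bowen~\cite{bowen}, Ch.~1, Thms.~1.5 and~1.7) provides a spectral gap: for any H\"older continuous $g$ on $\Sigma$, $e^{-nP(\varphi)}\mathcal{L}_{\varphi}^{n} g$ converges to $h_{\varphi}\int g\,d\nu_{\varphi}$ at geometric rate in the H\"older norm.

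The first step is to express $\mu_{\varphi}(\tilde{\Sigma}^{n}(\omega))$ through the transfer operator. For the associated one-sided cylinder $C_{n}(\omega)$ of length $n$, iterative application of the duality $\int \mathcal{L}_{\varphi} f\,d\nu_{\varphi} = e^{P(\varphi)}\int f\,d\nu_{\varphi}$ produces the identity
\[
	\mu_{\varphi}(C_{n}(\omega)) = e^{S_{n}\varphi(\omega)- nP(\varphi)}\int e^{\Delta_{n}(\omega,x)}\, h_{\varphi}(\omega_{0}\dotsb\omega_{n-1}x)\, d\nu_{\varphi}(x),
\]
where the integral is restricted to $x\in\Sigma$ with $\omega_{n-1}\to x_{0}$ admissible, and
\[
	\Delta_{n}(\omega,x) := S_{n}\varphi(\omega_{0}\dotsb\omega_{n-1}x) - S_{n}\varphi(\omega).
\]
By H\"older continuity of $\varphi$, each summand of the telescoping series defining $\Delta_{n}$ is bounded by a geometric tail, so $\Delta_{n}(\omega,x) \to \Delta_{\infty}(\sigma^{n}\omega, x)$ with error $O(r^{n})$ uniformly in $x$; likewise $h_{\varphi}(\omega_{0}\dotsb\omega_{n-1}x) = h_{\varphi}(\omega) + O(r^{n})$.

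Second, I would define the correction factor $\psi$ as the stabilized integral
\[
	\psi(y) := \int e^{\Delta_{\infty}(y,x)}\, d\nu_{\varphi}(x)
\]
(with the same admissibility restriction on $x$), where $\Delta_{\infty}(y,x)$ is the absolutely convergent H\"older limit of the telescoping series. The H\"older regularity of $\psi$ on $\tilde{\Sigma}$ is inherited from the geometric decay of the summands, and positivity is immediate. Substituting back yields
\[
	\frac{\mu_{\varphi}(\tilde{\Sigma}^{n}(\omega))\,\psi(\sigma^{n}\omega)^{-1}}{\exp\{S_{n}\varphi(\omega) - nP(\varphi)\}} = h_{\varphi}(\omega) + O(r^{n}),
\]
which is precisely \eqref{eq:gibbsCharacterization} together with the rate \eqref{eq:gibbsCharacterizationRate}.

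The main obstacle is to arrange the definition of $\psi$ so that it depends only on the two-sided sequence $\sigma^{n}\omega$ and not on the arbitrary choice of prefix used to set up the transfer-operator identity. This requires showing that the portion of $\Delta_{n}$ sensitive to the distant past of $\omega$ cancels against $h_{\varphi}(\omega)$ via a cohomological identity, leaving a genuine function of the bi-infinite tail $\sigma^{n}\omega$. This bookkeeping is standard in the thermodynamic formalism of~\cite{bowen}; no new estimate is needed beyond the RPF spectral gap and the H\"older exponent of $\varphi$.
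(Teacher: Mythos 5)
Your argument is correct and is essentially the paper's own proof: both rest on the duality identity $\mu(C_{n}(\omega))=\lambda^{-n}\langle \mathcal{L}_{\varphi}^{n}(\mathbf{1}_{C_{n}(\omega)}h_{\varphi}),\nu_{\varphi}\rangle$, factor out $\exp\{S_{n}\varphi(\omega)-n\,\mathrm{Pressure}(\varphi)\}$, and identify the remaining integral as a H\"{o}lder function of $\sigma^{n}\omega$ up to a geometrically small error coming from the telescoping bound on $S_{n}\varphi(\omega^{(n)}x)-S_{n}\varphi(\omega)$. The only cosmetic difference is that the paper first reduces to the eigenmeasure via $d\mu_{\varphi}=h_{\varphi}\,d\nu_{\varphi}$ while you keep $h_{\varphi}$ inside the integral and approximate it at the end; also, no cohomological cancellation is actually needed in your final step, since the terms of $\Delta_{n}$ that see the distant past of $\omega$ are already $O(r^{n})$ by the H\"{o}lder estimate you invoked and are absorbed into the error.
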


The proof is deferred to the end of the section so as not to distract
from the main line of argument.  

 Any Gibbs state $\mu_{\varphi}$ on $\tilde{\Sigma }$ is
ergodic and mixing, so ergodic averages of
$\mu_{\varphi}-$integrable functions on $\tilde{\Sigma}$ converge
almost surely.  In the arguments of sec.~\ref{sec:criticalExp} below
it will be necessary to use this convergence simultaneously for a
continuously parametrized family of Gibbs states (one for every value
of $r\in [1,R]$). The following result asserts that under suitable
hypotheses the convergence in the ergodic theorem is uniform.

\begin{proposition}\label{proposition:uniformErgodicity}
Assume that the mappings $r\mapsto \varphi_{r}$ and $r\mapsto g_{r}$
are continuous relative to the H\"{o}lder norm. Let $\mu_{r}$ be the
Gibbs state with potential function $\varphi_{r}$. Then the
expectations $\int g_{r}\, d\mu_{r}$ vary continuously with $r$, and
for each $\varepsilon >0$ there exist constants $C<\infty$ and
$0<\varrho <1$ such that for all $r\in [1,R]$ and all $n\geq 1$,
\begin{equation}\label{eq:uniformLD}
	\mu_{r}\left\{\omega \in \tilde{\Sigma } \,: \, \Bigl\lvert \frac{S_{n}g_{r}
	(\omega)}{n}-\int g_{r}\,d\mu_{r} \Bigr\rvert \geq
	\varepsilon\right\}\leq C\varrho^{n} .
\end{equation}
\end{proposition}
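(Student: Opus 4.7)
The plan is to establish a uniform large-deviations bound on $S_n g_r/n - \int g_r \, d\mu_r$ by perturbing the Ruelle transfer operator, applying the exponential Chebyshev inequality, and then using compactness of $[1,R]$ to extract uniformity. Continuity of $r \mapsto \int g_r \, d\mu_r$ is immediate: the continuity of $r \mapsto g_r$ in the H\"older norm makes $\{g_r\}$ equicontinuous and uniformly bounded on $\tilde{\Sigma}$, and combined with the weak continuity of $r \mapsto \mu_r$ noted just after \eqref{eq:gibbsStateDecomp} this gives continuity of the integrals.

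For the deviation bound, for each $r \in [1,R]$ and each $t$ in a small neighborhood of $0$ I would consider the tilted Ruelle operator $\mathcal{L}_{r,t}$ with potential $\varphi_r + t g_r$, whose leading eigenvalue $\lambda(r,t) = \exp P(r,t)$ is simple and isolated by \cite{bowen}, Ch.~1. Joint continuity of $(r,t) \mapsto \varphi_r + t g_r$ in the H\"older norm, together with standard perturbation theory for quasi-compact operators, yields joint continuity of $\lambda(r,t)$, of the leading eigenfunction $h_{r,t}$, and of the eigenmeasure $\nu_{r,t}$, together with a positive lower bound on the spectral gap uniformly on compact subsets of parameter space. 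Spectral decomposition of $\mathcal{L}_{r,t}^n$ then produces a Laplace-type asymptotic
\begin{equation*}
\int e^{t S_n g_r} \, d\mu_r = e^{n(P(r,t) - P(r,0))} \bigl( A(r,t) + O(\varrho_0^n) \bigr),
\end{equation*}
where $A(r,t) > 0$ is jointly continuous and where $0 < \varrho_0 < 1$, together with the implied constant, is independent of $(r,t)$ in the chosen compact set.

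Writing $\Lambda_r(t) := P(r,t) - P(r,0)$, so that $\Lambda_r(0) = 0$ and $\Lambda_r'(0) = \int g_r \, d\mu_r =: m_r$, the exponential Chebyshev inequality yields
\begin{equation*}
\mu_r\bigl\{ S_n g_r / n - m_r \geq \varepsilon \bigr\} \leq C \exp\bigl( -n I_r(\varepsilon) \bigr), \qquad I_r(\varepsilon) := \sup_{t \in [0,t_0]} \bigl[ t(m_r + \varepsilon) - \Lambda_r(t) \bigr],
\end{equation*}
with a symmetric bound for the lower tail obtained by replacing $t$ by $-t$.

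The step that I expect to be the main obstacle is showing $\inf_{r \in [1,R]} I_r(\varepsilon) > 0$. Convexity of $\Lambda_r$ together with the second-order expansion $\Lambda_r(t) - t\,m_r = (t^2/2)\sigma_r^2 + O(t^3)$ (where $\sigma_r^2$ is the asymptotic variance of $g_r$ under $\mu_r$) gives $I_r(\varepsilon) \geq \varepsilon^2 / (4 \sigma_r^2)$ for small $\varepsilon$ whenever $\sigma_r^2 > 0$; joint continuity in $(r,t)$ and compactness of $[1,R]$ then deliver the needed uniformity, provided the degenerate case where $g_r$ is $\sigma$-cohomologous to a constant is handled separately (in which case $S_n g_r / n = m_r + O(1/n)$ pointwise and the conclusion is trivial). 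The true technical burden is the uniform spectral-gap estimate for the perturbed operators $\mathcal{L}_{r,t}$: one must verify that small H\"older-norm perturbations of $\varphi_r$ induce small operator perturbations on an appropriate H\"older Banach space, and that the gap of $\mathcal{L}_{\varphi_r}$ itself is bounded below on $r \in [1,R]$. This is a standard Kato-type argument, but it is the one place where one must carefully use the hypothesis that $r \mapsto \varphi_r$ is continuous in the H\"older norm rather than merely in the sup norm.
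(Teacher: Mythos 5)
Your proposal follows essentially the same route as the paper's proof: tilt the Ruelle operator by $\theta g_{r}$, use regular perturbation theory for the leading eigenvalue/eigenfunction and a uniform spectral gap over the compact parameter set to control the moment generating function of $S_{n}g_{r}$, and then apply the exponential Chebyshev bound together with the vanishing first derivative and bounded second derivative of the pressure in $\theta$. The only cosmetic differences are that the paper first reduces to one-sided potentials via a coboundary and normalizes $\mathcal{L}_{\varphi_{r}}1=1$ rather than phrasing the conclusion through a Legendre-transform rate function, and the paper does not need to separate out the degenerate zero-variance case since it only uses an upper bound on $d^{2}\text{Pressure}/d\theta^{2}$.
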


The proof is given at the end of the section, following that of 
Lemma~\ref{lemma:gibbsCharacterization}. The hypothesis that the
functions $g_{r}$ be H\"{o}lder continuous, and not merely continuous,
is necessary. 

\subsection{Gibbs states and Green's function on
spheres}\label{ssec:greenSpheres}
Denote by $\lambda_{r,m}$ the probability measure on the sphere
$S_{m}\subset \Gamma $ with density proportional to $G_{r} (1,x)^{2}$,
that is, such that 
\begin{equation}\label{eq:lambda-def}
	\lambda_{r,m} (x)=\frac{G_{r} (1,x)^{2}}{\sum_{y\in
	S_{m}}G_{r} (1,y)^{2}} \quad \text{for all} \;\; x\in S_{m}.
\end{equation}
Recall that $S_{m}$ is in one-to-one correspondence with the paths of
length $m$ in the automaton $\mathcal{A}$ that begin at $s_{*}$; in
particular, each $x\in S_{m}$ corresponds uniquely to a path $\tau
\omega $, where $\tau $ is a transient prefix (necessarily of length
$K$) and $\omega\in \Sigma^{m-K} $ is a path in the set of recurrent
vertices of $\mathcal{A}$. Thus, there is a surjection $x\mapsto
\omega $ from $S_{m}$ to $\Sigma^{m-K}$. Denote by
$\lambda^{*}_{r,m}$ the pushforward to $\Sigma^{m-K}$ of
$\lambda_{r,m}$ via this surjection. By \eqref{eq:sphereDecomp}, the
measure $\lambda^{*}_{r,m}$ is related to the Green's function by
\begin{equation}\label{eq:measuresOnSm}
	\lambda^{*}_{r,m} (\varrho )=\frac{\sum_{\tau :\tau \varrho \in
	\mathcal{P}} G_{r} (1,\alpha_{*} (\tau \varrho ))^{2}}{\sum_{y\in
	S_{m}}G_{r} (1,y)^{2}} \quad \text{for all} \;\; \varrho \in
	\Sigma^{m-K},
\end{equation}
where the sum is over all transient prefixes $\tau $ such that the
concatenation $\tau \varrho $ is a path in $\mathcal{A}$ (starting at
$s_{*}$). Hence, by equation~\eqref{eq:cocycle},
\begin{equation}\label{eq:inducedMeasures}
	\lambda^{*}_{r,m} (\varrho )=\frac{\sum_{\tau :\tau \varrho \in
	\mathcal{P}}G_{r} (1,\alpha_{*} (\tau ))^{2}\exp
	\{2S_{m-K}\varphi_{r} (\varrho ) \})}{\sum_{y\in S_{m}}G_{r} (1,y)^{2}} 
\end{equation}

\begin{proposition}\label{proposition:absolutelyCont}
For each $r\in [1,R]$,
the values $\lambda^{*}_{r,m} (\varrho)$ are the cylinder
probabilities of a Borel probability measure $\lambda^{*}_{r}$ on
$\Sigma$, that is, for each $m>K$ and each
$\omega \in \Sigma$, 
\begin{equation}\label{eq:cylinderProbs}
		\lambda^{*}_{r} (\Sigma^{m-K}
		(\omega))=\lambda^{*}_{r,m} (\omega^{(m-K)})
\end{equation}
where $\omega^{(m-K)}$ denotes the path of length $m-K$ consisting of
the first $m-K$ letters of $\omega$.  This probability measure is
absolutely continuous relative to the Gibbs state $\mu_{r}$ with
potential function $2\varphi_{r}$.  Furthermore, there exist constants
$0<C=C (r;2)<\infty$ such that as $m \rightarrow \infty$,
\begin{equation}\label{eq:sphereAsymptotics}
	\sum_{x\in S_{m}} G_{r} (1,x)^{2}
	\sim  C \exp \bigg\{m \text{\rm Pressure} (2 \varphi_{r}) \bigg\}.
\end{equation}
\end{proposition}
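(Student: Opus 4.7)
The plan is to rewrite the cylinder probabilities $\lambda^*_{r,m}$ so that they factor through the Ruelle--Perron--Frobenius theory for the potential $\phi := 2\varphi_r$, and then read off both the asymptotic \eqref{eq:sphereAsymptotics} and the Radon--Nikodym density of $\lambda^*_r$ with respect to $\mu_r$. By Corollary~\ref{corollary:positiveEntropy} the subshift $(\Sigma,\sigma)$ is topologically mixing, so the Bowen machinery applies to the H\"{o}lder potential $\phi$, with eigendata $h_\phi, \nu_\phi$, Gibbs state $\mu_r$, and pressure $P_r := \text{\rm Pressure}(\phi)$.

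Starting from \eqref{eq:inducedMeasures} together with \eqref{eq:cocycle}, for each path $\varrho \in \Sigma^{m-K}$ with initial recurrent vertex $v := v_\varrho$, I would rewrite
\[
\lambda^*_{r,m}(\varrho) \;=\; \frac{W_r(v)\exp\{S_{m-K}\phi(\varrho^\sharp)\}}{Z_{r,m}}, \qquad W_r(v) := \sum_{\tau:\, v_\tau \to v} G_r(1,\alpha_*(\tau))^2,
\]
where $\varrho^\sharp$ is any infinite extension of $\varrho$; the H\"{o}lder continuity of $\varphi_r$ from Theorem~\ref{theorem:holderMartinKernel} bounds the ambiguity by a uniform multiplicative factor, and $Z_{r,m} := \sum_{y \in S_m}G_r(1,y)^2$.

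Summing over $\varrho \in \Sigma^{m-K}_v$ and then over $v$ identifies the normalizer with an iterate of the Ruelle operator: the inner sums equal $(\mathcal{L}_\phi^{m-K}\mathbf{1})$ evaluated at a representative point of $\Sigma_v$, up to a bounded H\"{o}lder factor. By Lemma~\ref{lemma:gibbsCharacterization} (equivalently, Ruelle--Perron--Frobenius with exponentially decaying error), $e^{-nP_r}(\mathcal{L}_\phi^n\mathbf{1}) \to h_\phi \cdot \nu_\phi(\mathbf{1})$ uniformly. Weighting by the finite collection $\{W_r(v)\}$ gives $Z_{r,m} \sim C_r e^{mP_r}$ with $C_r > 0$, proving \eqref{eq:sphereAsymptotics}. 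Feeding this back into the display for $\lambda^*_{r,m}(\varrho)$ and comparing with the Gibbs inequality \eqref{eq:gibbs} for $\mu_r$ shows that $\lambda^*_{r,m}(\varrho)/\mu_r(\tilde\Sigma^{m-K}(\varrho))$ converges exponentially to a bounded positive H\"{o}lder function of $\varrho$ depending only on $v_\varrho$. This limiting density defines $\lambda^*_r \ll \mu_r$, and \eqref{eq:cylinderProbs} becomes the defining relation.

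I expect the main obstacle to be the \emph{exact} consistency of the family $\{\lambda^*_{r,m}\}_m$ as cylinder probabilities of a single Borel measure --- that summing $\lambda^*_{r,m+1}$ over one-step forward extensions of $\varrho$ recovers $\lambda^*_{r,m}(\varrho)$. This is not transparent from \eqref{eq:inducedMeasures} since both the numerator and the normalizer change with $m$. I would verify it by computing $\sum_e \lambda^*_{r,m+1}(\varrho e)/\lambda^*_{r,m}(\varrho)$: this ratio becomes a quotient of Ruelle iterates that collapses via the eigenvalue identity $\mathcal{L}_\phi h_\phi = e^{P_r} h_\phi$, up to an error vanishing exponentially in $m$ that is then absorbed by defining $\lambda^*_r$ directly as $F_r \, d\mu_r$ for the H\"{o}lder density $F_r$ obtained above, rather than as a strict inverse limit.
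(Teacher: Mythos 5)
Your proposal is correct and follows essentially the same route as the paper: both start from \eqref{eq:inducedMeasures} together with the observation that the prefix weight depends only on the initial recurrent vertex, and both obtain the normalizer asymptotics and the absolute continuity from the Ruelle--Perron--Frobenius machinery behind Lemma~\ref{lemma:gibbsCharacterization}. The only organizational difference is that the paper pins down the limit measure by weak compactness of $\{\lambda^{*}_{r,m}\}$ plus the mixing of $\mu_{r}$ (which averages out the correction factor $\psi_{r}(\sigma^{m-K}\xi)$ over cylinders), whereas you evaluate the partition function directly via $e^{-nP}\mathcal{L}_{\phi}^{n}\mathbf{1}\to h_{\phi}$ and then define $\lambda^{*}_{r}$ by its density --- two faces of the same spectral-gap statement, and your handling of the consistency of \eqref{eq:cylinderProbs} as a limiting rather than exact identity matches what the paper's proof actually establishes.
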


\begin{proof}
For any $\omega \in \Sigma^{*}\cup \Sigma$, the set of prefixes $\tau$
such that $\tau \omega$ is a path in the automaton $\mathcal{A}$
depends only on the  first entry of $\omega$, and consequently, so
does  the factor 
\[
	g_{r} (\omega):=\sum_{\tau :\tau \omega \in
	\mathcal{P}}G_{r} (1,\alpha_{*} (\tau ))^{2}.
\]
It follows trivially that $g_{r}$ is a positive, H\"{o}lder continuous
function of $\omega$. Denote by $\mu_{r}$ the Gibbs state with
potential function $\varphi_{r}$, and let $\psi =\psi_{r}$ be as in
Lemma~\ref{lemma:gibbsCharacterization}. Equations
\eqref{eq:inducedMeasures} and \eqref{eq:gibbsCharacterizationRate}
imply that for any $\omega \in \Sigma^{m-K}$,
\begin{equation}\label{eq:measureRatio}
	\frac{\mu_{r} (\Sigma^{m-K} (\omega))}{\lambda^{*}_{r,m}
	(\omega)}\sim  \frac{\psi_{r} (\sigma^{m-K}\omega)}{g_{r}
	(\omega)}\frac{\sum_{y\in S_{m}}G_{r} (1,x)^{2}}{\exp \{(m-K) 
	\text{Pressure} (2\varphi_{r}) \}} 
\end{equation}

Since $\Sigma^{*}\cup \Sigma $ is compact, any subsequence of
$\lambda^{*}_{r,m}$ must contain a subsequence $\lambda^{*}_{r,m_{n}}$
that converges weakly. The weak limit of any such subsequence must be
a probability measure with support contained in $\Sigma$, because the support of
$\lambda^{*}_{r,m}$ is $\Sigma^{m-K}$. We claim that there can be only one
possible weak limit $\lambda^{*}_{r}$, and that
\eqref{eq:sphereAsymptotics} must hold. To see this, suppose that
$\lambda^{*}_{r}$ is a weak limit of some subsequence
$\lambda^{*}_{r,m_{n}}$; then for any fixed cylinder set $\Sigma^{l}
(\omega^{(l)})$, \eqref{eq:measureRatio} implies that
\begin{equation}\label{eq:subsequential}
	\lambda^{*}_{r} (\Sigma^{l}(\omega^{(l)}))=\lim_{n \rightarrow
	\infty}
	 \frac{	g_{r} (\omega^{(l)}) \exp \{(m_{n}-K) 
	\text{Pressure} (2\varphi_{r})\}}{\sum_{y\in S_{m_{n-K}}}G_{r} (1,x)^{2}}
	\int_{\Sigma^{l} (\omega^{(l)})} \psi_{r}
	(\sigma^{m_{n}-K}\xi)\,d\mu_{r} (\xi).
\end{equation}
But because $\mu_{r}$ is mixing (cf. \cite{bowen}, Prop.~1.14),
\[
	\lim_{ m \rightarrow \infty}	\int_{\Sigma^{l}
	(\omega^{(l)})} \psi_{r} (\sigma^{m-K}\xi )^{-1} \,d\mu_{r} (\xi)
	=\mu_{r} (\Sigma^{l}(\omega^{(l)})) \int_{\Sigma} \psi_{r}
	(\xi)^{-1}\, d\mu_{r} (\xi).
\]
Therefore, the convergence in \eqref{eq:subsequential} holds along the
entire sequence of positive integers, not merely along the
subsequence. Hence, the measure $\lambda^{*}_{r}$ is uniquely
determined, and \eqref{eq:sphereAsymptotics} holds. Finally, the
convergence \eqref{eq:subsequential} and the definition of a Gibbs
state imply that $\lambda^{*}_{r}$ is absolutely continuous with
respect to $\mu_{r}$.
\end{proof}

\begin{note}\label{note:theta}
Virtually the same argument shows that for any $\theta \in \zz{R}$, as
$m \rightarrow \infty$,
\[
	\sum_{x\in S_{m}} G_{r} (1,x)^{\theta }
	\sim  C \exp \bigg\{m \text{\rm Pressure} (\theta  \varphi_{r}) \bigg\}.
\]
The result \eqref{eq:sphereAsymptotics} implies that $\text{Pressure}
(2\varphi_{r})\leq 0$, and Note~\ref{note:greenOnSphere} implies that
$\text{Pressure} (\varphi_{r})>0$ for all $r\in (1,R]$.  Since
$\text{Pressure} (\theta \varphi_{r})$ varies continuously with
$\theta$, it follows that for each $r\in (1,R]$ there exists $\theta
\in (1,2]$ such that $\text{Pressure} (\theta \varphi_{r})=0$.  It can
also be shown that the convergence of the sums is uniform in $r$ for
$r\in [1,R]$.
\end{note}

By Proposition~\ref{proposition:uniformErgodicity}, ergodic averages
of any H\"{o}lder continuous function will converge a.s. under any
$\mu_{r}$ to their $\mu_{r}-$integrals, and the probabilities of large
deviations will be small uniformly for $r\in [1,R]$. Since $\lambda^{*}_{r}\ll
\mu_{r}$, with Radon-Nikodym derivative varying continuously with $r$
in the H\"{o}lder norm, the uniformity transfers to the measures
$\lambda^{*}_{r}$.

\begin{corollary}\label{corollary:ergodic}
Let $g:\Sigma\cup \Sigma^{*} \rightarrow \zz{R}$ be any H\"{o}lder continuous
function. For each $x\in S_{m}$, let $\omega^{m-K}$ be the word of
length $m-K$ in $\Sigma^{*}$ that corresponds to the path
$\varrho^{x}$, and set $n=m-K$. Then
\begin{equation}\label{eq:ergodicTheorem}
		\lim_{m \rightarrow \infty}
	\lambda^{*}_{r,m}\left\{
	\omega \in \Sigma^{n}\,:\, \Bigl \lvert
	     n^{-1}\sum_{j=1}^{n} g\circ \sigma^{j} (\omega^{n})
	     -\int g\,d\mu_{r}\Bigr \rvert >\varepsilon 
	 \right\} =0.
\end{equation}
and for each $\varepsilon >0$ the convergence is uniform in $r\in [1,R]$.
\end{corollary}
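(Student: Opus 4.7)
The plan is to reduce the corollary to Proposition~\ref{proposition:uniformErgodicity} by means of the absolute continuity $\lambda^{*}_{r}\ll \mu_{r}$ established in Proposition~\ref{proposition:absolutelyCont}, and then to translate the resulting bound on the infinite-sequence measures $\lambda^{*}_{r}$ back to the finite-sphere measures $\lambda^{*}_{r,m}$ using the cylinder identification \eqref{eq:cylinderProbs}. Since the function $g$ does not depend on $r$, the family $g_{r}\equiv g$ trivially varies continuously with $r$ in the H\"older norm, and Proposition~\ref{proposition:uniformErgodicity} yields constants $C<\infty$ and $\varrho <1$, independent of $r\in [1,R]$, so that
\[
\mu_{r}\Bigl\{\omega \in \tilde{\Sigma} \,:\, \bigl\lvert n^{-1}S_{n}g(\omega)-\textstyle\int g\,d\mu_{r}\bigr\rvert >\varepsilon /2\Bigr\}\leq C\varrho^{n}.
\]

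Next I produce an $L^{\infty}$ bound on the Radon--Nikodym derivative $h_{r}:=d\lambda^{*}_{r}/d\mu_{r}$ that is uniform in $r\in [1,R]$. Passing to the limit in \eqref{eq:measureRatio}, and combining with \eqref{eq:sphereAsymptotics}, the density $h_{r}(\omega)$ is (up to a multiplicative constant $C(r)>0$) proportional to $g_{r}(\omega)/\psi_{r}(\sigma^{\infty}\omega)$, where $g_{r}(\omega)=\sum_{\tau :\tau \omega \in \mathcal{P}}G_{r}(1,\alpha_{*}(\tau))^{2}$ and $\psi_{r}$ is the positive H\"older function from Lemma~\ref{lemma:gibbsCharacterization}. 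The mapping $r\mapsto \varphi_{r}$ is continuous in the H\"older norm by Theorem~\ref{theorem:holderMartinKernel}, so regular perturbation theory for the Ruelle operator $\mathcal{L}_{\varphi_{r}}$ makes $r\mapsto \psi_{r}$, $r\mapsto g_{r}$, and $r\mapsto C(r)$ continuous in the H\"older topology and strictly positive. Compactness of $[1,R]$ then gives $M:=\sup_{r\in [1,R]}\|h_{r}\|_{\infty}<\infty$, and the transfer $\lambda^{*}_{r}\leq M\mu_{r}$ yields
\[
\lambda^{*}_{r}\Bigl\{\omega \in \Sigma \,:\, \bigl\lvert n^{-1}S_{n}g(\omega)-\textstyle\int g\,d\mu_{r}\bigr\rvert >\varepsilon /2\Bigr\}\leq MC\varrho^{n}.
\]

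Finally, the event in the corollary involves $g\circ \sigma^{j}$ applied to the finite word $\omega^{n}\in \Sigma^{n}$ rather than to an infinite extension $\omega \in \Sigma$. H\"older continuity of $g$ on $\Sigma^{*}\cup \Sigma$ implies that for any $j<n$ and any infinite extension $\omega$ of $\omega^{n}$, one has $|g(\sigma^{j}\omega^{n})-g(\sigma^{j}\omega)|\leq H2^{-\theta (n-j)}$, so summing over $j$ gives $|S_{n}g(\omega^{n})-S_{n}g(\omega)|\leq B$ for a constant $B$ depending only on $g$. Dividing by $n$ shows the two Birkhoff averages differ by $O(1/n)$, which is $<\varepsilon /2$ once $n$ is large. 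Combined with the cylinder identity \eqref{eq:cylinderProbs}, which equates $\lambda^{*}_{r,m}$ with the marginal of $\lambda^{*}_{r}$ on the first $n=m-K$ coordinates, the displayed bound transfers to $\lambda^{*}_{r,m}$ and proves uniform convergence in $r\in [1,R]$.

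The main obstacle is the uniform bound $M<\infty$: it requires H\"older-norm continuity (not merely pointwise) of the Perron--Frobenius data $\psi_{r}$, of the auxiliary function $g_{r}$, and of the pressure $\mathrm{Pressure}(2\varphi_{r})$ as $r$ ranges over $[1,R]$. This relies essentially on Theorem~\ref{theorem:holderMartinKernel}, via which the H\"older continuity of $r\mapsto \varphi_{r}$ is upgraded by perturbation theory to H\"older continuity of all the Ruelle operator data; everything else in the argument is routine.
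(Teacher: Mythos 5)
Your proposal is correct and follows essentially the same route as the paper: the paper likewise combines Proposition~\ref{proposition:uniformErgodicity} with the absolute continuity $\lambda^{*}_{r}\ll\mu_{r}$ (with density uniformly controlled in $r$, as asserted in the paragraph preceding the corollary), the $O(1/n)$ truncation estimate from H\"{o}lder continuity of $g$, and the cylinder identity \eqref{eq:cylinderProbs}. The only quibble is your description of the density as proportional to $g_{r}(\omega)/\psi_{r}(\sigma^{\infty}\omega)$ --- by the mixing step in \eqref{eq:subsequential} the $\psi_{r}$ factor is integrated out, so the density depends only on the first coordinate of $\omega$ --- but this does not affect the uniform $L^{\infty}$ bound you actually use.
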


\begin{proof}
For any infinite sequence $\omega \in \Sigma$ denote by
$\omega^{(n)}\in \Sigma^{n}$ the sequence of length $n$ consisting of
the first $n$ entries of $\omega$. Since $g$ is by hypothesis
H\"{o}lder continuous, there exist constants $C<\infty$ and
$0<\varrho<1$ such that for all $j\leq n$ and all $\omega \in \Sigma$,
\[
	|g (\sigma^{j}\omega) -g (\sigma^{j}\omega^{(n)})|\leq
	C\varrho^{n-j}. 
\]
Hence, with $C'=C/ (1-\varrho)$, for every $n\geq 1$,
\[
	\Bigl \lvert n^{-1}\sum_{j=1}^{n} g\circ \sigma^{j} (\omega)
	-	n^{-1}\sum_{j=1}^{n} g\circ \sigma^{j}
	(\omega^{(n)})\Bigr \rvert \leq \frac{C'}{n}.
\]

Now by equation \eqref{eq:cylinderProbs}, if $W$ is a $\Sigma -$valued
random variable with distribution $\lambda_{r}$ then for each $m>
K$ the truncation $W^{(n)}=W^{( m-K)}$ has distribution
$\lambda^{*}_{r,m}$. The result therefore follows from
Proposition~\ref{proposition:uniformErgodicity} and the preceding
paragraph.
\end{proof}

The ergodic average in \eqref{eq:ergodicTheorem} is expressed as an
average over the orbit of the path $\omega^{(n)}=\varrho^{x}$ in the
Cannon automaton, but it readily translates to an equivalent statement
for ergodic averages along the geodesic segment $L=L (1,x)$. Observe
that each vertex $y\in L$ disconnects $L$ into two geodesic segments
$L^{+}=L^{+}_{y}$ and $L^{-}=L^{-}_{y}$, where $L^{+}$ is the segment
of $L$ from $y$ to $x$, and $L^{-}$ is the segment of $L$ from $y$ to
$1$. These paths determine determine finite reduced words $e^{+}=e^{+}
(y)$ and $e^{-}=e^{-} (y)$ in the group generators $A$ (recall that
each oriented edge $(u,v)$ of the Cayley graph is labelled by the
generator $u^{-1}v$). The word $e^{+} (y)$ and the reversal of the
word $e^{-} (y)$ are both elements of $\Sigma^{*}$; thus, the concatenation
$e^{-}e^{+}$ can be viewed as an element of the compact metric
space $\tilde{\Sigma }^{*}\cup  \tilde{\Sigma}$, where
$\tilde{\Sigma}$ is the set of all bi-infinite paths 
and $\tilde{\Sigma}^{*}$  the set of finite or semi-infinite paths
in $\mathcal{R}$.

\begin{corollary}\label{corollary:ergodicCorollary}
Let $f:\tilde{\Sigma }^{*}\cup  \tilde{\Sigma} \rightarrow \zz{R}$ be
a H\"{o}lder continuous function, and let $g=f\circ \alpha^{-1}$ be
its pullback to the space of two-sided paths in the Cannon
automaton. Then for each $\varepsilon >0$,
\begin{equation}\label{eq:erg2}
			\lim_{m \rightarrow \infty}
	\lambda_{r,m}\left\{
	x\in S_{m}\,:\, \Bigr \lvert
	     m^{-1}\sum_{y\in  L (1,x)} f (e^{+} (y),e^{-} (y))
	     -\int g\,d\mu_{r}\Bigr \rvert >\varepsilon 
	 \right\} =0.
\end{equation}
and the convergence is uniform in $r\in [1,R]$.
\end{corollary}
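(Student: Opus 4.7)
The plan is to express the geodesic sum as a Birkhoff average of shifts along the symbolic code of $x$, then invoke the uniform ergodic theorem (Proposition~\ref{proposition:uniformErgodicity}, equivalently Corollary~\ref{corollary:ergodic}). Identify $x\in S_m$ with the path $\tau\varrho^x$ in $\mathcal{A}$, where $\tau$ is a transient prefix of length $K$ and $\varrho^x\in\Sigma^{m-K}_{\tau}$. The vertex $y_j\in L(1,x)$ at distance $j\geq K$ from $1$ corresponds to position $k:=j-K$ in $\varrho^x$; the $K$ vertices at distance $<K$ contribute at most $K\|f\|_{\infty}/m=o(1)$ to the normalised sum and can be discarded. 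Under this correspondence, $e^{+}(y_{k+K})$ is the one-sided suffix $\sigma^k\varrho^x$, and $e^{-}(y_{k+K})$ is the reversal of $\varrho^x_1\cdots\varrho^x_k\tau_K\cdots\tau_1$.

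Next, lift to the two-sided space $\tilde{\Sigma}$: fix once and for all an arbitrary reference bi-infinite path $\hat\omega\in\tilde{\Sigma}$, and for each $x\in S_m$ define $\tilde\omega^x\in\tilde{\Sigma}$ by overwriting the entries of $\hat\omega$ at positions $1,\ldots,m-K$ with $\varrho^x$. Then $\sigma^k\tilde\omega^x$ has forward half beginning with $\sigma^k\varrho^x$ and a past that agrees in its first $k$ coordinates with the true reversed prefix of $\varrho^x$. By H\"older continuity of $f$ on $\tilde{\Sigma}$ (which follows from the hypothesis that $f$ be H\"older on $\tilde{\Sigma}^{*}\cup\tilde{\Sigma}$), there exist constants $C<\infty$ and $\eta\in(0,1)$, independent of $r\in[1,R]$ and $x$, such that
\[
\bigl|f(e^{+}(y_{k+K}),e^{-}(y_{k+K}))-g(\sigma^k\tilde\omega^x)\bigr|\leq C(\eta^{k}+\eta^{m-K-k}),
\]
where $g$ is $f$ viewed on $\tilde{\Sigma}$. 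Summing over $k$ and dividing by $m$ absorbs the geometric tails into an $O(1/m)$ error, giving
\[
m^{-1}\sum_{y\in L(1,x)}f(e^{+}(y),e^{-}(y))=(m-K)^{-1}\sum_{k=0}^{m-K-1}g(\sigma^k\tilde\omega^x)+o(1)
\]
uniformly in $x\in S_m$.

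The right-hand side is a Birkhoff shift average for the map $\sigma$ on $\tilde{\Sigma}$. The law of $\tilde\omega^x$ under $\lambda_{r,m}$ coincides, via the surjection $x\mapsto\varrho^x$, with the lift of $\lambda^{*}_{r,m}$ obtained by pasting the reference past onto each code word. By Proposition~\ref{proposition:absolutelyCont} and Lemma~\ref{lemma:gibbsCharacterization}, the cylinder probabilities of this lift are controlled by the Gibbs measure $\mu_r$ with H\"older-continuous Radon--Nikodym densities that depend continuously on $r\in[1,R]$ in the H\"older norm. The uniform large-deviation bound \eqref{eq:uniformLD} applied to the H\"older function $g$ therefore yields concentration of the Birkhoff averages around $\int g\,d\mu_r$, uniformly in $r\in[1,R]$. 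Combining this with the displayed approximation gives \eqref{eq:erg2}.

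The principal technical obstacle is that the past $e^{-}(y)$ at each geodesic vertex is a \emph{reversed} prefix of the one-sided code, so the natural forward shift on $\Sigma$ does not encode the motion of the origin along the geodesic. This is overcome by lifting to the bi-infinite space $\tilde{\Sigma}$, where the Gibbs state $\mu_r$ naturally lives and where $\sigma$ does implement translation of the origin; the price paid --- the arbitrary choice of a reference past --- is absorbed by the exponential H\"older bound, since the mismatch between the true reversed prefix and the fixed reference past lies at depth at least $\min(k,m-K-k)$ for interior positions.
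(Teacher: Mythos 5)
Your argument is correct and is precisely the translation the paper has in mind: the paper omits the proof, remarking only that Corollary~\ref{corollary:ergodic} ``readily translates'' to the geodesic setting, and your reduction --- identify the vertices of $L(1,x)$ with shift positions in the symbolic code of $x$, control the discrepancy between the finite two-sided configuration $(e^{+}(y),e^{-}(y))$ and a bi-infinite extension by the H\"{o}lder bound $C(\eta^{k}+\eta^{m-K-k})$, and invoke the uniform large-deviation estimate \eqref{eq:uniformLD} transferred to $\lambda^{*}_{r,m}$ as in Corollary~\ref{corollary:ergodic} --- is exactly that translation. The only cosmetic slip is that overwriting a single fixed reference path $\hat\omega$ need not yield an admissible element of $\tilde{\Sigma}$ (the junction transitions may be forbidden); instead choose for each $x$ any admissible bi-infinite extension of $\varrho^{x}$, which exists by Assumptions~\ref{assumption:irreducibility}--\ref{assumption:mixing}, and the same exponential estimate goes through unchanged.
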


\subsection{Proofs of Lemma~\ref{lemma:gibbsCharacterization} and
Proposition~\ref{proposition:uniformErgodicity}}\label{ssec:proofs}

 \begin{proof}[Proof of Lemma~\ref{lemma:gibbsCharacterization}] In
view of the representation \eqref{eq:gibbsStateDecomp} of the Gibbs
state $\mu =\mu_{\varphi }$, it suffices to prove the assertions of
the lemma with $\mu $ replaced by $\nu =\nu_{\varphi}$ and
$h_{\varphi}$ by the constant function $1$, where $\nu$ is
the Perron-Frobenius eigenmeasure of the adjoint Ruelle operator
$\mathcal{L}^{*}$. The Ruelle operator
$\mathcal{L}=\mathcal{L}_{\varphi }$ is defined as follows for
continuous functions $g:\Sigma^{*}\cup \Sigma \rightarrow \zz{R}$:
\[
	\mathcal{L}g (\omega) = \sum_{\xi : \sigma (\xi )=\omega}
		     \exp \{\varphi (\xi) \}g (\xi)
	\quad \Longrightarrow \quad 
	\mathcal{L}^{n}g (\omega) = \sum_{\xi : \sigma^{n} (\xi )=\omega}
		     \exp \{S_{n}\varphi (\xi) \}g (\xi).
\] 
Denote by $\lambda =\exp \{\text{Pressure} (\varphi) \}$  the
Perron-Frobenius eigenvalue of $\mathcal{L}$. 
Using the fact that $\nu =\lambda^{-1}\mathcal{L}^{*}\nu =\lambda^{-n}
(\mathcal{L}^{*})^{n}\nu$, we have (with $\mathbf{1}_A$ denoting the
indicator function of $A$, and using inner product notation
$\xclass{g,\nu}=\int g\, d\nu$) 
\begin{align*}
	\nu (\Sigma^{n} (\omega))&=\xclass{\mathbf{1}_{\Sigma^{n} (\omega)},\nu}\\
    &=\lambda^{-n}\xclass{\mathbf{1}_{\Sigma^{n}
    (\omega)},(\mathcal{L}^{*})^{n}\nu}\\
		    &= \lambda^{-n}\xclass{\mathcal{L}^{n}\mathbf{1}_{\Sigma^{n}
    (\omega)},\nu} .
\end{align*}
Using the definition of $\mathcal{L}$, we obtain
\begin{align*}
	\nu (\Sigma^{n}
	(\omega))&=\lambda^{-n}\int_{\Sigma}\left(\sum_{\zeta \,:\,
	\sigma^{n}  (\zeta)=\xi} \exp \{S_{n}\varphi (\zeta) \}\mathbf{1}_{\Sigma^{n}
    (\omega)} (\zeta) \right)  \,d\nu (\xi )\\
    &= \lambda^{-n} \exp\{S_n \varphi (\omega)\} \int_{\Sigma}\left(\sum_{\zeta \,:\,
	\sigma^{n}  (\zeta)=\xi} \exp \{S_{n}\varphi
	(\zeta)-S_{n}\varphi (\omega) \}\mathbf{1}_{\Sigma^{n} 
    (\omega)} (\zeta) \right)  \,d\nu (\xi ).
\end{align*}
The integrand is nonzero only for those sequences $\zeta\in \Sigma $
such that $\zeta_{i}=\omega_{i}$ for $1\leq i\leq n$, and consequently
only for those $\xi \in \Sigma$ such that the concatenation
$\omega^{(n)}\xi$ is an element of $\Sigma$. (Here $\omega (n)$ is the
string of length $n$ consisting of the first $n$ entries of
$\omega$. The concatenation $\omega^{(n)}\xi\in \Sigma$ if and only if
$\omega_{n} \mapsto \xi_{1}$ is an allowable transition in the
subshift $(\Sigma ,\sigma)$).
Because $\varphi:\Sigma \rightarrow
\zz{R}$ is H\"{o}lder continuous and depends only on the forward
entries of its entries, there is a H\"{o}lder continuous function
$\psi  :\tilde{\Sigma} \rightarrow \zz{R}$ such that 
\[
	\lim_{n \rightarrow \infty}\int_{\Sigma}\left(\sum_{\zeta \,:\,
	\sigma^{n}  (\zeta)=\xi} \exp \{S_{n}\varphi
	(\zeta)-S_{n}\varphi (\omega) \}\mathbf{1}_{\Sigma^{n} 
    (\omega)} (\zeta) \right)  \,d\nu (\xi ) -\psi  (\sigma^{n}\omega)=0
\]
and such that the error is bounded by $Cr^{n}$ for some $C>0$ and
$0<r<1$. This implies
\eqref{eq:gibbsCharacterization}--\eqref{eq:gibbsCharacterizationRate}.
\end{proof}

\begin{proof}
[Proof of Proposition~\ref{proposition:uniformErgodicity}] It suffices
to consider the special case of functions $\varphi_{r}$ and $g_{r}$
that depend only on $\omega \in \tilde{\Sigma}$ through the
coordinates $\omega_{1}\omega_{2}\dotsb$.  (This follows from the fact
that any H\"{o}lder continuous function $f$ on $\tilde{\Sigma}$ is
cohomologous to a H\"{o}lder continuous function on $\Sigma$, and that
the implied coboundary can be chosen so as to vary continuously with
$f$. See \cite{bowen}, Lemma~1.6 and its proof.)

Ruelle's Perron-Frobenius theorem asserts that as an operator on 
the space of H\"{o}lder continuous functions (relative to a given
exponent) the operator $\mathcal{L}_{\varphi}$ has a simple, positive
eigenvalue $\lambda_{\varphi}$, that the corresponding eigenfunction
$h_{\varphi}$ is strictly positive, and that the remainder of the
spectrum is contained in a disc of radius $<\lambda_{\varphi}$. (Bowen
\cite{bowen} does not state the result containing the residual
spectrum, although his argument essentially proves it. See
\cite{parry-pollicott} for the stronger version.) It then follows by
standard  arguments in regular perturbation theory (cf. \cite{kato},
especially ch.~6) that $\lambda_{\varphi}, h_{\varphi},$ and
$\nu_{\varphi}$ (the eigenmeasure for the adjoint operator
$\mathcal{L}_{\varphi}^{*}$) vary smoothly with $\varphi$,
assuming that $h_{r}$ and $\nu_{r}$ are normalized so that 
$\nu_{r}$ and $h_{r}\nu_{r}$ both have total mass $1$.Furthermore,
the residual spectral radius remains uniformly bounded away from
$\lambda_{\varphi }$ for $\varphi$ in any compact set. It follows that if
$r\mapsto g_{r}$ and $r\mapsto \varphi_{r}$ are continuous relative to
the H\"{o}lder norm then $\xclass{g_{r},\mu_{r}}$ varies continuously
with $r$. Therefore, there is no loss of generality in assuming that
$\int g_{r}\,d\mu_{r}=0$  for all $r\in [1,R]$. Moreover, because
$\varphi_{r}$ can be replaced by $\varphi_{r}-\log h_{r}\circ \sigma
+\log h_{r}-\log \lambda_{r}$, there  is no loss of generality in
assuming that $\mathcal{L}_{\varphi_{r}}1=1$ for all $r$. In
particular, $\lambda_{r}=1$, $h_{r}\equiv 1$, and
$\mu_{r}=\nu_{r}$.

Next, we express the moment generating function of $S_{n}g_{r}$ in
terms of the Ruelle operator $\mathcal{L}_{\varphi_{r}+\theta
g_{r}}$. Since $\nu_{r}=\mu_{r}$, and since
$\mathcal{L}^{*}_{\varphi_{r}}\nu_{r}=\nu_{r}$, it follows that for
each $\theta \in \zz{R}$,
\begin{align*}
	\xclass{e^{\theta S_{n}g_{r}},\mu_{r}}
	 &=\xclass{e^{\theta S_{n}g_{r}},\nu_{r}}\\
	&=\xclass{e^{\theta S_{n}g_{r}},(\mathcal{L}^{*}_{\varphi_{r}})^{n}nu_{r}}\\
	&=\xclass{\mathcal{L}_{\varphi_{r}}^{n}e^{\theta S_{n}g_{r}},\nu_{r}}\\
	&=\xclass{\mathcal{L}_{\varphi_{r}+\theta g_{r}}^{n}1,\nu_{r}}.
\end{align*}
 The operators $\mathcal{L}_{\varphi_{r}+\theta g_{r}}$ vary
analytically with $\theta$, as do their lead eigenvalue,
eigenfunction, and eigenmeasure, and so by Ruelle's Perron-Frobenius
theorem and regular perturbation theory,
\[
	\mathcal{L}_{\varphi_{r}+\theta g_{r}}^{n}1
	\sim \lambda_{\varphi_{r}+\theta g_{r}}^{n}h_{\varphi_{r}+\theta g_{r}}
\]
uniformly for $r\in [1,R]$ and for $\theta$ in any compact
neighborhood of $0$. Since $h_{\varphi_{r}+\theta g_{r}}$ varies
continuously with $r$ and $\theta$ (relative to the H\"{o}lder norm,
and hence also the sup norm), it follows that for any $\delta >0$
there exists $C=C_{\delta}<\infty$  such that 
\[
	\xclass{e^{\theta S_{n}g_{r}},\mu_{r}}\leq
	C\lambda_{\varphi_{r}+\theta g_{r}}^{n} 
	=C\exp \{n\text{Pressure} (\varphi_{r}+\theta g_{r})\}
\]
for all $n\geq 1$, all $r\in [1,R]$, and all $\theta \in [-\delta
,\delta]$. 

The proof of the proposition is now completed by applying the
Markov-Chebyshev inequality, which implies that for any
$\varepsilon>0$ and $\theta >0$,
\[
	\mu_{r}\{S_{n}g_{r}\geq n\varepsilon \}\leq 
	e^{-n \theta \varepsilon} \int e^{-\theta S_{n}g_{r}}
	\,d\mu_{r}
	\leq C \exp \{-n\theta  \varepsilon +n\text{Pressure}
	(\varphi_{r}+\theta g_{r})\} .
\]
By elementary calculations (see, e.g.,
\cite{parry-pollicott}, Propositions 4.10-4.11),
\[
	\frac{d \text{Pressure} (\varphi_{r}+\theta g_{r})}{d\theta } 
	= \int g_{r} \,d\mu_{r} =0
	\quad \text{and} \quad 
	\frac{d^{2}\text{Pressure} (\varphi_{r}+\theta g_{r})}{d\theta
	^{2}} \geq 0.
\]
Consequently, for any $\varepsilon >0$, if $\theta >0$ is sufficiently
small then the Markov-Chebyshev bound is exponentially decaying in
$n$, uniformly for $r\in [1,R]$. This proves half of
\eqref{eq:uniformLD}; the other half is obtained by replacing
$g_{r}$ by $-g_{r}$.

\end{proof}

\section{Evaluation of the pressure at
$r=R$}\label{ssec:pressureEvaluation}

Proposition~\ref{proposition:absolutelyCont} implies that the sums
$\sum_{y\in S_{m}}G_{R} (1,y)^{2}$ grow or decay sharply exponentially
at exponential rate $\text{\rm Pressure}(2\varphi_{R})$. Consequently, to prove
the relation \eqref{eq:backscatterA} of Theorem~\ref{theorem:2} it
suffices to prove that this rate is $0$.

\begin{proposition}\label{proposition:pressureEqualsZero}
$\text{\rm Pressure}(2\varphi_{R})=0$.
\end{proposition}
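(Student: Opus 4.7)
The plan is to establish $\text{Pressure}(2\varphi_R) = 0$ by showing that the sphere sums $f(m) := \sum_{x \in S_m} G_R(1, x)^2$ are bounded above and below by positive constants independent of $m$; by Proposition~\ref{proposition:absolutelyCont} this is equivalent to the vanishing of the pressure. The upper bound gives $\text{Pressure}(2\varphi_R) \leq 0$, the lower bound gives $\text{Pressure}(2\varphi_R) \geq 0$, and each requires a distinct argument.

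For the upper bound, I would decompose every $R$-weighted path $\gamma: 1\to 1$ visiting $S_m$ at its last such visit. Writing $\gamma=\gamma_1\gamma_2$ with $\gamma_1$ ending at the position $x\in S_m$ of this last visit and $\gamma_2$ remaining in the ball $\{|\cdot|<m\}$ after its first step, summing weights and using reversal symmetry on $\gamma_2$ yields
\[
	G_R(1,1) \;\geq\; \sum_{x\in S_m} G_R(1,x)\,H_R(1,x;S_m),
\]
where $H_R(1,x;S_m)$ denotes the sum of $R$-weights over paths from $1$ whose first visit to $S_m$ occurs at $x$. The main auxiliary estimate is $H_R(1,x;S_m)\geq c\,G_R(1,x)$ uniformly in $x\in S_m$ and $m$. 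To prove this, write $G_R(1,x)-H_R(1,x;S_m)=\sum_{z\in S_m\setminus\{x\}}H_R(1,z;S_m)F_R(z,x)$ and estimate $F_R(z,x)$ via Ancona's inequality (Proposition~\ref{proposition:ancona}): when $d(z,x)$ is comparable to $m$, hyperbolicity forces the $z$-to-$x$ geodesic to pass within bounded distance of $1$, yielding $F_R(z,x)\leq CF_R(z,1)F_R(1,x)$; for smaller separations the contribution is controlled by Harnack bounds and combinatorial counting of the number of $z\in S_m$ at given distance from $x$. Combining these with $\sum_z H_R(1,z;S_m)F_R(z,1)\leq 1$ and the fact (Proposition~\ref{proposition:gGreenAsymptotics}) that $G_R(1,1)$ is close to $2$ for large $g$ yields the claim, and so $f(m)\leq G_R(1,1)/c$.

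For the lower bound I would argue by contradiction: if $\text{Pressure}(2\varphi_R)<0$, then \eqref{eq:sphereAsymptotics} makes $f(m)$ exponentially small, so $\sum_x G_R(1,x)^2 = \sum_m f(m) < \infty$. But by spectral theory of the self-adjoint operator $R\mathbb{P}$ on $\ell^2(\Gamma)$,
\[
	\sum_{x\in\Gamma}G_R(1,x)^2 \;=\; \langle\delta_1,(I-R\mathbb{P})^{-2}\delta_1\rangle \;=\; \int(1-\lambda)^{-2}\,d\mu(\lambda),
\]
where $\mu$ is the spectral measure of $R\mathbb{P}$ at $\delta_1$. Non-amenability of $\Gamma_g$ places $1$ in the continuous spectrum of $R\mathbb{P}$; together with the exponential decay of $G_R$ (Theorem~\ref{theorem:expDecay}), one can show that the spectral mass $\mu([1-\epsilon,1])$ is not $o(\epsilon)$ as $\epsilon\downarrow 0$---for instance by producing approximate eigenvectors of $R\mathbb{P}$ supported on balls of radius $\sim 1/\sqrt{\epsilon}$ via the super-multiplicative structure of $F_R$---forcing the spectral integral to diverge and contradicting the assumption.

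The main obstacle is twofold. In the upper bound, the estimate $H_R(1,x;S_m)\geq cG_R(1,x)$ requires a delicate geometric split of the sum over $z$ by $d(z,x)$, since Ancona's near-$1$ waypoint decomposition is valid only at large separations, while the short-separation contribution must be handled by counting and Harnack estimates. In the lower bound, the divergence of the spectral integral is, in spirit, a corollary of exponential Green's function decay, but care is needed to establish the requisite quantitative lower bound on $\mu([1-\epsilon,1])$ without invoking the local limit theorem, since that result depends on Theorem~\ref{theorem:criticalExponent}, which in turn rests on Proposition~\ref{proposition:pressureEqualsZero} itself.
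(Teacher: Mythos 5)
The inequality $\text{Pressure}(2\varphi_R)\leq 0$ is the easy half, and while your last-exit decomposition of $G_R(1,1)$ over $S_m$ could in principle be pushed through (it is the heuristic the paper itself sketches in the introduction), it is considerably harder than necessary: the paper simply observes that for $r<R$ the derivative $dG_r(1,1)/dr$ is finite, so by the differential equation \eqref{eq:GPrime} the sum $\sum_x G_r(1,x)^2$ is finite, whence $\text{Pressure}(2\varphi_r)<0$ by Proposition~\ref{proposition:absolutelyCont}, and continuity of the pressure gives the bound at $r=R$. Your auxiliary estimate $H_R(1,x;S_m)\geq c\,G_R(1,x)$ is plausible given the Ancona inequalities at $R$, but as written the identity you base it on is not quite right (the first-entrance decomposition gives $G_R(1,x)=\sum_{z\in S_m}H_R(1,z;S_m)G_R(z,x)$, with $G_R$ rather than $F_R$ in the second factor and a nontrivial diagonal term), and you have not actually carried out the delicate splitting you describe.

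The serious gap is in the lower bound, which is where the content of the proposition lies. Your sufficient condition for the divergence of $\int(1-\lambda)^{-2}\,d\mu(\lambda)$ — that $\mu([1-\epsilon,1])$ is not $o(\epsilon)$ — is in fact \emph{false} for the surface group: Theorem~\ref{theorem:criticalExponent} (equivalently, the square-root singularity of $G_r(1,1)$) corresponds to $\mu([1-\epsilon,1])\asymp\epsilon^{3/2}=o(\epsilon)$, exactly as for the free group, where the Kesten spectral density vanishes like $\sqrt{1-\lambda}$ at the top. What is actually needed is divergence of $\int_0\epsilon^{-3}\mu([1-\epsilon,1])\,d\epsilon$, i.e.\ that $\mu([1-\epsilon,1])$ is not $o(\epsilon^2)$ in an integrated sense, and this does \emph{not} follow from non-amenability together with exponential decay of $G_R$: there are non-amenable groups carrying symmetric, finitely supported random walks (e.g.\ free products of high-dimensional lattices with local limit exponent $>2$) for which $\sum_n nP^1\{X_n=1\}R^n=\sum_x G_R(1,x)^2<\infty$. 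So the required spectral lower bound is essentially equivalent to the proposition itself, and your proposed construction of approximate eigenvectors is not quantitative enough to supply it. The paper's argument is entirely different: assuming $\sum_{x\in S_m}G_R(1,x)^2$ decays exponentially, it uses a branching random walk with Poisson-$(R+\varepsilon)$ offspring, decomposed by ``colors,'' to identify $G_{R+\varepsilon}(1,1)$ with $\sum_k\varepsilon^k\sum_{x_1,\dots,x_k}G_R(1,x_1)\prod G_R(x_i,x_{i+1})G_R(x_k,1)$, and then shows (Lemma~\ref{lemma:snapback}, using the Ancona inequalities at $R$ and thin triangles to control the iterated convolutions along geodesics) that this series converges for small $\varepsilon>0$ — contradicting the fact that $R$ is the radius of convergence of the Green's function. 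You would need to replace your spectral step with an argument of this kind.
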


The second assertion \eqref{eq:lp} of
Theorem~\ref{theorem:2} also follows from
Proposition~\ref{proposition:pressureEqualsZero},  by the main result
of \cite{lalley:renewal}. (If it could be shown that the cocycle
$\varphi_{R}$ defined by \eqref{eq:defCocycle} above is
\emph{nonlattice} in the sense of \cite{lalley:renewal}, then the
result\eqref{eq:lp} could be strengthened from $\asymp$ to $\sim$.)

The remainder of this section is devoted to the proof of
Proposition~\ref{proposition:pressureEqualsZero}.  The first step,
that $\text{\rm Pressure}(2\varphi_{R})\leq 0$, is a consequence of
the differential equations \eqref{eq:GPrime}.  These imply the
following.

\begin{lemma}\label{corollary:belowR}
For every $r<R$,
\begin{align}\label{belowR}
	\text{\rm Pressure} (2\varphi_{r})&<0, \quad \text{and so}\\
\label{eq:atR}
	\text{\rm Pressure} (2\varphi_{R})&\leq 0.
\end{align}
\end{lemma}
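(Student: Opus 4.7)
The plan is to use Proposition~\ref{proposition:GPrime} at $x=y=1$ together with the sharp asymptotic growth rate of the spherical Green's-function sums from Proposition~\ref{proposition:absolutelyCont}. Since $G_r(1,1)=\sum_n P^1\{X_n=1\}r^n$ is a power series with radius of convergence $R$, it is real-analytic on $(0,R)$, so $\frac{d}{dr}G_r(1,1)$ is finite for every $r<R$. Plugging $x=y=1$ into the differential equation \eqref{eq:GPrime} gives
\[
	\frac{d}{dr}G_{r}(1,1) \;=\; r^{-1}\sum_{z\in\Gamma}G_{r}(1,z)^{2}\;-\;r^{-1}G_{r}(1,1),
\]
so finiteness of the left side forces $\sum_{z\in\Gamma}G_{r}(1,z)^{2}<\infty$ for every $r<R$.

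Next I would decompose the convergent sum over spheres,
\[
	\sum_{z\in\Gamma}G_{r}(1,z)^{2}\;=\;\sum_{m=0}^{\infty}\sum_{x\in S_{m}}G_{r}(1,x)^{2},
\]
and invoke the asymptotic \eqref{eq:sphereAsymptotics} from Proposition~\ref{proposition:absolutelyCont}, which gives
\[
	\sum_{x\in S_{m}}G_{r}(1,x)^{2}\;\sim\;C(r)\exp\{m\,\text{\rm Pressure}(2\varphi_{r})\}.
\]
Convergence of the outer series $\sum_m$ is possible only if the exponential rate is strictly negative, i.e.\ $\text{\rm Pressure}(2\varphi_{r})<0$. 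This establishes \eqref{belowR}.

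For the boundary assertion \eqref{eq:atR}, I would pass to the limit $r\uparrow R$. Theorem~\ref{theorem:holderMartinKernel} states that $r\mapsto K_{r}(x,\cdot)$ is continuous in the H\"older norm on a fixed H\"older class, and inspection of the definition \eqref{eq:defCocycle} shows that therefore $r\mapsto \varphi_{r}$ (and hence $r\mapsto 2\varphi_{r}$) is continuous into the H\"older space on $\Sigma$. Because topological pressure is continuous in its potential with respect to the uniform (and a fortiori the H\"older) norm, $r\mapsto \text{\rm Pressure}(2\varphi_{r})$ is continuous on $[1,R]$, so the strict inequality for $r<R$ yields the non-strict inequality at $r=R$.

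The only genuine subtlety is justifying the termwise manipulation: the differential equation \eqref{eq:GPrime} is valid for $0\le r<R$ and $\frac{d}{dr}G_r(1,1)$ is finite by analyticity, which closes the argument with no Tauberian input; the strict inequality at $r=R$ itself is not claimed here (indeed, the point of Proposition~\ref{proposition:pressureEqualsZero} is that equality holds), so no extra work is needed.
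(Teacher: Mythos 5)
Your proposal is correct and follows essentially the same route as the paper: finiteness of $\frac{d}{dr}G_r(1,1)$ for $r<R$ via Proposition~\ref{proposition:GPrime} forces $\sum_{z}G_r(1,z)^2<\infty$, the sphere asymptotics of Proposition~\ref{proposition:absolutelyCont} then force $\text{Pressure}(2\varphi_r)<0$, and continuity of the pressure in the potential (the paper uses sup-norm continuity, you use H\"older continuity; either suffices) gives the non-strict inequality at $r=R$.
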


\begin{proof}
For $r<R$ the Green's function $G_{r} (1,1)$ is analytic in $r$, so
its derivative must be finite. Thus, by
Proposition~\ref{proposition:GPrime}, the sum $\sum_{x\in
\Gamma}G_{r}(1,x)^{2}$ is finite. (The last term $r^{-1}G_{r (1,1)}$
in equation \eqref{eq:GPrime} remains bounded as $r \rightarrow R-$
because $G_{R} (1,1,)<\infty$.)
Proposition~\ref{proposition:absolutelyCont}  therefore implies that
$\text{\rm Pressure} (2\varphi_{r})$ must be negative.  Since
$\text{Pressure} (\varphi)$ varies continuously in $\varphi$, relative
to the sup norm, \eqref{eq:atR} follows.
\end{proof}

\begin{proof}
[Proof of Proposition~\ref{proposition:pressureEqualsZero}.]  To
complete the proof it  suffices, by the preceding lemma, to
show that $\text{Pressure} (2\varphi_{R})$ cannot be negative. In view
of Proposition~\ref{proposition:absolutelyCont}, this is equivalent to
showing that $\sum_{x\in S_{m}}G_{R} (1,x)^{2}$ cannot decay
exponentially in $m$.  This will be accomplished by proving that
exponential decay of $\sum_{x\in S_{m}}G_{R} (1,x)^{2}$ would force
\begin{equation}\label{eq:subcriticality}
	G_{r} (1,1)<\infty  \quad \text{for some} \; r>R,
\end{equation}
which is impossible since $R$ is the radius of convergence of the
Green's function.

To prove \eqref{eq:subcriticality}, we will use the branching random
walk interpretation of the Green's function discussed in
sec.~\ref{ssec:brw}.\footnote{Logically this is unnecessary --- the
argument has an equivalent formulation in terms of weighted paths,
using \eqref{eq:greenByPath} --- but the branching random walk
interpretation seems more natural.}  Recall that a branching random
walk on the Cayley graph $G^{\Gamma}$ is specified by an offspring
distribution $\mathcal{Q}$; assume for definiteness that this is the
Poisson distribution with mean $r>0$. At each step, particles first
produce offspring particles according to this distribution,
independently, and then each of these particles jumps to a randomly
chosen neighboring vertex. If the mean of the offspring distribution
is $r>0$, and if the branching random walk is initiated by a single
particle at the root $1$, then the mean number of particles located at
vertex $x$ at time $n\geq 1$ is $r^{n}P^{1}\{X_{n}=x \}$. Thus, in
particular, $G_{r} (1,1)$ equals the expected total  number of particle
visits to the root vertex $1$. The strategy is to show that if 
 $\sum_{x\in S_{m}}G_{R} (1,x)^{2}$ decays exponentially in $m$,
then for some $r>R$ the branching random walk remains
\emph{subcritical}, that is, the expected total number of particle
visits to $1$ is finite. 

Recall that the Poisson distribution with mean $r>R$ is the
convolution of  Poisson distributions with means $R$ and $\varepsilon
:=r-R$, that is, the result of adding independent random variables
$U,V$ with distributions Poisson-$R$ and Poisson-$\varepsilon$ is a
random variable $U+V$ with distribution Poisson-$r$. Thus, each 
reproduction step in the branching random walk can be done by making
independent draws $U,V$ from the Poisson-$R$ and Poisson-$\varepsilon$
distributions. Use these independent draws to assign \emph{colors}
$k=0,1,2,\dotsc$ to the particles according to the following rules:

\begin{enumerate}
\item [(a)] The ancestral particle at vertex $1$ has color $k=0$.
\item [(b)]  Any offspring resulting from a $U-$draw has the
same color as its parent.
\item [(c)] Any offspring resulting from a $V-$draw has color equal to
$1+$the color of its parent.
\end{enumerate}

\begin{lemma}\label{lemma:colors}
For each $k=0,1,2,\dotsc$, the expected number of visits to the vertex
$y$ by particles of color $k$ is 
\begin{equation}\label{eq:colors}
	v_{k} (y) =\varepsilon^{k} \sum_{x_{1},x_{2},\dotsc x_{k}\in
	\Gamma} G_{R} (1,x_{1})\left( \prod_{i=1}^{k-1} G_{R}
	(x_{i},x_{i+1})\right) G_{R} (x_{k},y)	.
\end{equation}
\end{lemma}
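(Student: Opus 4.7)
The plan is to prove the identity by induction on $k$, using linearity of expectation and the Poisson splitting that renders the $U$- and $V$-draws mutually independent at every reproduction event.

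For the base case $k=0$, I would observe that the color-0 subpopulation is itself a Poisson-$R$ branching random walk initiated by the ancestor at the root: color is preserved exactly by the $U$-draws, which are Poisson-$R$, while each $V$-draw merely launches a higher-color sublineage that contributes no color-0 descendants. Proposition~\ref{proposition:brw} then gives $v_{0}(y)=G_{R}(1,y)$, in agreement with the stated formula in the degenerate $k=0$ case (reading the chain with no intermediate $x_{i}$ as $G_{R}(1,y)$).

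For the inductive step, condition on the entire color-$(k-1)$ population. Each color-$(k-1)$ particle, at each of its reproduction events, emits an independent Poisson-$\varepsilon$ number of ``color-$k$ founders'' through the $V$-draw, and the color-$k$ descendants of any one founder then evolve as an independent Poisson-$R$ BRW rooted at the parent's vertex $x_{k}$. Applying Proposition~\ref{proposition:brw} to this daughter BRW identifies the expected color-$k$ visits to $y$ contributed by one founder at $x_{k}$ as $G_{R}(x_{k},y)$, so linearity of expectation yields the one-step recursion
\begin{equation*}
v_{k}(y)\;=\;\varepsilon\sum_{x_{k}\in\Gamma}v_{k-1}(x_{k})\,G_{R}(x_{k},y),
\end{equation*}
and substituting the inductive formula for $v_{k-1}(x_{k})$ telescopes into the chain of $G_{R}$-factors claimed in the lemma.

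The delicate point, and what I expect to be the main obstacle, is pinning down the location of a newly-created color-$k$ founder so that its daughter process is genuinely a Poisson-$R$ BRW started at $x_{k}$ (and not at a displaced random neighbor). The cleanest bookkeeping is via the path representation~\eqref{eq:greenByPath}: expand each weighted path from $1$ to $y$ using $(R+\varepsilon)^{|\gamma|}=\sum_{j}\binom{|\gamma|}{j}\varepsilon^{j}R^{|\gamma|-j}$, interpret a choice of $k$ ``marked'' edges as the positions of the $V$-transitions, decompose $\gamma$ at those marks into $k+1$ unmarked subpaths with vertex sequence $1\to x_{1}\to\cdots\to x_{k}\to y$, and regroup the weighted sums over the unmarked pieces using the multiplicativity of $w_{R}$ to produce the $G_{R}$-factors $G_{R}(1,x_{1}),G_{R}(x_{1},x_{2}),\dots,G_{R}(x_{k},y)$. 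With the rooting convention absorbed into this regrouping, the recursion above becomes transparent and the induction closes.
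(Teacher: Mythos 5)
Your proof is correct and follows essentially the same route as the paper: induction on the color index, with the ``pioneers''/``founders'' created by $V$-draws launching independent Poisson-$R$ branching random walks, Proposition~\ref{proposition:brw} supplying the factor $G_{R}(x_{k},y)$, and the one-step recursion $v_{k}(y)=\varepsilon\sum_{x_{k}}v_{k-1}(x_{k})G_{R}(x_{k},y)$ closing the induction. The rooting subtlety you flag is real but is glossed over in the paper's own proof as well (it only perturbs the formula by factors of $R$ and one-step displacements, which are immaterial for the intended application to \eqref{eq:epsilon}), and your proposed path-marking bookkeeping is precisely the ``equivalent formulation in terms of weighted paths'' that the paper itself mentions in a footnote.
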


\begin{proof}
By induction on $k$. First, particles of color $k=0$ reproduce and
move according to the rules of a branching random walk with offspring
distribution Poisson-$R$, so the expected number of visits to vertex
$y$ by particles of color $k=0$ is $G_{R} (1,y)$, by
Proposition~\ref{proposition:brw}. This proves \eqref{eq:colors} in
the case $k=0$. Second, assume that the assertion is true for color
$k\geq 0$, and consider the production of particles of color
$k+1$. Such particles are produced only by particles of color $k$ or
color $k+1$. Call a particle a \emph{pioneer} if its color is
different from that of its parent, that is, if it results from a
$V-$draw. Each pioneer of color $k+1$ engenders its own branching
random walk of  descendants with color $k+1$;  the offspring distribution for
this branching random walk is the Poisson-$R$ distribution. Thus, for
a pioneer born at site $z\in \Gamma$, the expected number of visits to
$y$ by its color--$( k+1)$ descendants is $G_{R} (z,y)$. Every particle of color
$k+1$ belongs to the progeny of one and only one pioneer;
consequently, the expected number of visits to $y$ by particles of
color $k+1$ is
\[
	\sum_{z\in \Gamma }u_{k+1} (z) G_{R} (z,y),
\]
where $u_{k+1} (z)$ is the expected number of pioneers of color $k+1$
born at site $z$ during the evolution of the  branching process. But
since  pioneers of color $k+1$ must be children of parents of color
$k$, and since for any particle  the expected number of children of
different color is $\varepsilon$, it follows that 
\[
	u_{k+1} (z) =\varepsilon v_{k} (z).
\]
Hence, formula \eqref{eq:colors} for $k+1$ follows by the induction hypothesis.
\end{proof}

Recall that our objective is to show that if $\sum_{x\in S_{m}}G_{R}
(1,x)^{2}$ decays exponentially in $m$ then 
$G_{r} (1,1)<\infty$ for some $r=R+\varepsilon >R$. The branching
random walk construction exhibits $G_{r} (1,1)$ as the expected total
number of particle visits to the root vertex $1$, and this is the sum
over $k\geq 0$ of the expected number $v_{k} (1)$ of visits by
particles of color $k$. Thus, to complete the proof of
Proposition~\ref{proposition:pressureEqualsZero} it suffices, by
Lemma~\ref{lemma:colors}, to show that for some $\varepsilon >0$,
\begin{equation}\label{eq:epsilon}
	\sum_{k=0}^{\infty} \varepsilon^{k}\sum_{x_{1},x_{2},\dotsc x_{k}\in
	\Gamma}G_{R} (1,x_{1})\left( \prod_{i=1}^{k-1} G_{R} (x_{i},x_{i+1}) \right)
	G_{R} (x_{k},1)<\infty .
\end{equation}
This follows directly from the next lemma.
\end{proof}

\begin{lemma}\label{lemma:snapback}
Assume that Ancona's inequalities \eqref{eq:ancona} hold at the
spectral radius $R$ with a constant $C_{R}<\infty$. If the sum  $\sum_{x\in
S_{m}}G_{R} (1,x)^{2}$ decays exponentially in $m$, then there exist
constants $\delta >0$ and $C,\varrho  <\infty$ such that for
every $k\geq 1$,
\begin{equation}\label{eq:snapback}
	\sum_{x_{1},x_{2},\dotsc x_{k}\in
	\Gamma} G_{R} (1,x_{1})\left( \prod_{i=1}^{k-1} G_{R} (x_{i},x_{i+1}) \right)
	(1+\delta)^{|x_{k}|}G_{R} (x_{k},1)\leq C\varrho ^{k}.
\end{equation}
Here $|y|=d (1,y)$ denotes the distance of $y$ from the root $1$ in
the word metric.
\end{lemma}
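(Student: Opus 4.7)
The plan is to collapse the multiple sum into a single weighted convolution pairing, then prove a pointwise bound on the convolution operator acting on a specific weight function, which gives geometric control by induction. Write $g(x) := G_R(1,x)$, $H(x) := (1+\delta)^{|x|}$, and let $g^{*k}$ denote the $k$-fold convolution of $g$ on $\Gamma$. By translation invariance $G_R(a,b)=g(a^{-1}b)$ together with the symmetry $g(x)=g(x^{-1})$ inherited from the symmetric step distribution, summing out $x_1,\ldots,x_{k-1}$ in the definition of the left side of \eqref{eq:snapback} produces the compact form
$$U_k = \sum_{x \in \Gamma} g^{*k}(x)\,g(x)\,H(x) = \langle g^{*k},\, gH\rangle.$$
The base case $k=1$ is immediate: $U_1=\sum_m(1+\delta)^m\sum_{x\in S_m}g(x)^2<\infty$ once $\delta$ is taken below $e^\alpha-1$, where $\alpha>0$ is the hypothesized exponential decay rate of $\sum_{x\in S_m}g(x)^2$.

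The induction reduces to the pointwise estimate
$$G(gH)(x) \leq C_1\,(gH)(x) \qquad \forall\,x\in\Gamma,$$
where $G$ is the convolution operator $(Gf)(x)=\sum_y G_R(x,y)f(y)$. Because $g$ is symmetric, $G$ is self-adjoint and $g^{*(k+1)}=Gg^{*k}$, so
$$U_{k+1}=\langle Gg^{*k},\,gH\rangle=\langle g^{*k},\,G(gH)\rangle \leq C_1\,\langle g^{*k},\,gH\rangle = C_1\,U_k,$$
using $g^{*k}\geq 0$. Iterating yields $U_k\leq C_1^{k-1}U_1\leq C\varrho^k$ with $\varrho:=C_1$, which is the required bound.

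Establishing the pointwise estimate relies on a sharpened form of Ancona's inequality: for every $y\in\Gamma$, if $y^*$ denotes a projection of $y$ onto a geodesic segment from $1$ to $x$ in the Cayley graph, then
$$g(y)\,G_R(y,x) \leq C_A\,g(x)\,F_R(y^*,y)^2.$$
This is derived by applying Proposition~\ref{proposition:ancona} twice, to geodesics $[1,y]$ and $[y,x]$ (each of which passes within bounded distance of $y^*$ by the thin-triangle property of hyperbolic groups, with Harnack inequalities transporting Green's functions between nearby points), and then invoking the elementary upper bound $g(y^*)G_R(y^*,x)\leq G_R(1,1)g(x)$, which is a consequence of the first-passage decomposition $G_R=F_R\cdot G_R(1,1)$ and the super-multiplicativity of $F_R$ recorded in Lemma~\ref{lemma:superadditivity}. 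Substituting this into the definition of $G(gH)$, writing $F_R(y^*,y)=g(y^{*-1}y)/G_R(1,1)$, setting $z:=y^{*-1}y$, and applying $|y|\leq|y^*|+|z|$ gives
$$G(gH)(x) \leq \frac{C_A\,g(x)}{G_R(1,1)^2} \sum_{y^*\in[1,x]}(1+\delta)^{|y^*|}\sum_{z\in\Gamma}g(z)^2(1+\delta)^{|z|},$$
where the partition over projection points on $[1,x]$ is inclusive (each $y$ contributes through one $y^*$) and the inner sum has been extended to all of $\Gamma$. The inner sum is $U_1<\infty$, and the outer geometric sum is bounded by $C_\delta(1+\delta)^{|x|}$, giving the required estimate with $C_1=C_A C_\delta U_1/G_R(1,1)^2$.

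The main technical obstacle is the sharpened Ancona bound: standard Ancona provides only a multiplicative lower bound on products of Green's functions along geodesics, whereas the inductive step requires a matching upper bound that decays off the geodesic at the sharp rate controlled by $F_R(y^*,y)^2$. The crucial uniformity of the constant up to $r=R$ is supplied by Proposition~\ref{proposition:ancona}, and the hyperbolic thin-triangle property is what legitimizes replacing projections onto one geodesic by projections onto the nearby ones $[1,y]$ and $[y,x]$.
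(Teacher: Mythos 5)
Your proof is correct and is essentially the paper's argument: the paper likewise proceeds by induction on $k$, bounding the ratio of successive sums by a constant, and the key step in both is the pointwise estimate $\sum_{y}G_{R}(x,y)G_{R}(y,1)(1+\delta)^{|y|}\leq C\,G_{R}(x,1)(1+\delta)^{|x|}$, proved by partitioning $y$ according to its projection onto the geodesic $[1,x]$ and combining thin triangles, Ancona, Harnack, supermultiplicativity of $F_{R}$, and the geometric sum along the geodesic. Your repackaging of the induction via the self-adjoint convolution operator $G$ and the pairing $\langle g^{*k}, gH\rangle$ is a cosmetic reorganization of the same estimate.
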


\begin{proof}
Denote by $H_{k} (\delta)$ the left side of \eqref{eq:snapback}; the
strategy will be to prove by induction on $k$ that for sufficiently
small $\delta >0$ the ratios $H_{k+1} (\delta)/H_{k} (\delta)$ remain
bounded as $k \rightarrow \infty$.  Consider first the sum $H_{1}
(\delta)$: by the hypothesis that $\sum_{x\in S_{m}}G_{R} (1,x)^{2}$
decays exponentially in $m$ and the symmetry $G_{r} (x,y)=G_{r} (y,x)$
of the Green's function, for all sufficiently small $\delta>0$
\begin{equation}\label{eq:H1}
	H_{1} (\delta ):=\sum_{x\in \Gamma} G_{R} (1,x)^{2} (1+\delta)^{|x|}<\infty .
\end{equation}

Now consider the ratio $H_{k+1} (\delta)/H_{k} (\delta)$ . Fix
vertices $x_{1},x_{2},\dotsc ,x_{k}$, and for an arbitrary vertex
$y=x_{k+1}\in \Gamma $, consider its position \emph{vis a vis} the
geodesic segment $L=L (1,x_{k})$ from the root vertex $1$ to the
vertex $x_{k}$. Let $z\in L$ be
the vertex on $L$ nearest $y$ (if there is more than one, choose
arbitrarily).  By the triangle inequality,
\[
	|y|\leq |z|+d (z,y) .
\]
Because
the  group $\Gamma$ is word-hyperbolic, all geodesic triangles --- in
particular, any  triangle whose sides consist of geodesic segments 
from $y$ to $z$, from $z$ to $x_{k}$, and from $x_{k}$ to $y$, or any
triangle  whose sides consist of geodesic segments 
from $y$ to $z$, from $z$ to $1$, and from $1$ to $y$--- are
$\Delta -$thin, for some $\Delta<\infty$ (cf. \cite{gromov} or
\cite{benakli-kapovich}). 
Hence, any geodesic segment from
$x_{k}$ to $y$ must pass within distance $8\Delta$ of the vertex
$z$. Therefore, by the Harnack and Ancona inequalities
\eqref{eq:harnack} and \eqref{eq:ancona}, for some constant
$C_{*}=C_{R}C_{\text{Harnack}}^{32\Delta}<\infty$ independent of $y,x_{k}$,
\begin{align*}
	G_{R} (y,1)&\leq C_{*}G_{R} (y,z)G_{R} (z,1) \quad \text{and}\\
	G_{R} (y,x_{k})&\leq C_{*}G_{R} (y,z)G_{R} (z,x_{k}).
\end{align*}
On the other hand, by the log-subadditivity of the Green's function,
\[
	G_{R} (1,z)G_{R} (z,x_{k})\leq G_{R} (x_{k},1).
\]
It now follows that
\begin{align*}
	(1+\delta)^{|y|} G_{R} (x_{k},y)G_{R} (y,1)
	&\leq C_{*}^{2}(1+\delta)^{|z|+d (z,y)} G_{R} (z,x_{k}) G_{R} (z,y) G_{R} (y,z)G_{R} (z,1)\\
	&\leq C_{*}^{2}(1+\delta)^{|z|+d (z,y)} G_{R} (x_{k},1)G_{R}(z,y)^{2}.
\end{align*}
Denote by $\Gamma (z)$ the set of all vertices $y\in \Gamma$ such that
$z$ is a closest vertex to $y$ in the geodesic segment $L$. Then for
each $z\in L$,
\[
	\sum_{y\in \Gamma (z)} (1+\delta)^{d (z,y)}G_{R} (z,y)^{2}\leq 
	\sum_{y\in \Gamma } (1+\delta)^{|y|} G_{R} (1,y)^{2}=H_{1} (\delta).
\]
Finally, because $L$ is a geodesic segment from $1$ to $x_{k}$ there
is precisely one vertex $z\in L$ at distance $n$ from $x_{k}$ for
every integer $0\leq n\leq |x_{k} |$, so $\sum_{z\in L} (1+\delta)^{|z|} \leq C_{\delta}
(1+\delta)^{|x_{k}|}$ where $C_{\delta}= (1+\delta)/ (2+\delta)$. Therefore, 
\[
	H_{k+1} (\delta) \leq C_{*}^{2}C_{\delta }H_{1} (\delta)H_{k} (\delta).
\]
\end{proof}

\section{Critical Exponent of the Green's function at the Spectral Radius}\label{sec:criticalExp} 

\subsection{Reduction to Ergodic
Theory}\label{ssec:strategy} For ease of exposition I will consider
only the case $x=y=1$ of Theorem~\ref{theorem:criticalExponent}; the
general case can be done in the same manner.  The system of
differential equations \eqref{eq:GPrime} implies that the growth of the
derivative $dG_{r} (1,1)/dr$ as $r \rightarrow R-$ is controlled by
the growth of the quadratic sums $\sum_{x\in \Gamma}G_{r}
(1,x)^{2}$. To show that the Green's function has a square root
singularity at $r=R$, as asserted in \eqref{eq:criticalExponent}, it
will suffice to show that the (approximate) derivative behaves as
follows as $r \rightarrow R-$:

\begin{proposition}\label{proposition:eta}
For some $0<C<\infty$,
\begin{equation}\label{eq:eta}
	\eta (r):= \sum_{x\in \Gamma} G_{r} (1,x)^{2}\sim C/\sqrt{R-r} 
	\quad \text{as} \;r \rightarrow R-.
\end{equation}
\end{proposition}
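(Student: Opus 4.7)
The plan is to express $\eta(r)$ in terms of the leading Ruelle eigenvalue $\lambda_r = e^{P(r)}$ (with $P(r) := \text{Pressure}(2\varphi_r)$) and then exploit the ODE \eqref{eq:GPrime} as a self-consistent relation that forces a square-root singularity. First I would sum the sphere asymptotic of Proposition~\ref{proposition:absolutelyCont}: using the exponential error bound of Lemma~\ref{lemma:gibbsCharacterization} together with the H\"older continuity of $r \mapsto \varphi_r$ (Theorem~\ref{theorem:holderMartinKernel}) and standard perturbation theory for Ruelle operators (cf.\ the proof of Proposition~\ref{proposition:uniformErgodicity}), one obtains a uniform spectral gap, yielding for $r$ in a left neighborhood of $R$
\[
\sum_{x\in S_m}G_r(1,x)^2 \;=\; h(r)\,\lambda_r^m \;+\; O(\tilde\rho^{m}),
\]
with $h(r)>0$ continuous in $r$ and $0<\tilde\rho<1$ independent of $r$. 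Summing over $m \ge 0$ and invoking $\lambda_R = 1$ (Proposition~\ref{proposition:pressureEqualsZero}) gives $\eta(r)\sim h(R)/(1-\lambda_r)$ as $r\to R^-$.

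Next, set $u(r) := G_R(1,1)-G_r(1,1)$, so that $u(R)=0$. Proposition~\ref{proposition:GPrime} yields $-r\,u'(r) = \eta(r) - G_r(1,1)$, and hence $-r\,u'(r)\sim \eta(r)$ as $r\to R^-$. The scheme is to prove that
\[
\eta(r)\cdot u(r)\;\longrightarrow\;\kappa \qquad\text{for some }\kappa>0,\text{ as }r\to R^-.
\]
Granting this, the ODE becomes $-r\,u(r)\,u'(r)\sim\kappa$, equivalently $(u^2)'\sim -2\kappa/r$. Integrating from $r$ to $R$ with boundary value $u(R)=0$ then yields $u(r)^2\sim(2\kappa/R)(R-r)$, so that $u(r)\sim\sqrt{2\kappa/R}\,\sqrt{R-r}$ and $\eta(r)\sim\kappa/u(r)\sim\sqrt{\kappa R/2}\,/\sqrt{R-r}$, which is the desired asymptotic (with $C=\sqrt{\kappa R/2}$).

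The main obstacle is proving that $\eta(r)\,u(r)\to\kappa>0$. The upper bound $\limsup_{r\to R^-}\eta(r)\,u(r)<\infty$ can be bootstrapped from any polynomial lower bound on $u(r)$ via the ODE, and ultimately reduces to Ancona's inequalities (Proposition~\ref{proposition:ancona}) together with $G_R(1,1)<\infty$. The positive lower bound $\liminf_{r\to R^-}\eta(r)\,u(r)>0$ is the serious difficulty: in view of the first paragraph this is equivalent to $(1-\lambda_r)/u(r)\to h(R)/\kappa$, i.e.\ showing that the Ruelle eigenvalue $\lambda_r$ does not collapse toward $1$ appreciably faster than $u(r)$ decays to $0$. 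To prove this, one identifies both $1-\lambda_r$ and $u(r)$ with limits of integrals of H\"older observables against the Gibbs states $\mu_r$ on $\Sigma$, and applies Corollary~\ref{corollary:ergodicCorollary}; the Ancona inequalities furnish the uniform H\"older control needed to pass to the limit $r\to R^-$ without the spectral-radius integrand blowing up uncontrollably against $\mu_R$.
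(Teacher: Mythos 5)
Your first step is sound and coincides with the paper's: summing the sphere asymptotics \eqref{eq:sphereAsymptotics} over $m$ (with uniformity in $r$) gives exactly \eqref{eq:eta-Pressure}, and the identity $-r u'(r)=\eta(r)-G_r(1,1)$ is a correct reading of \eqref{eq:GPrime}. The gap is the claim $\eta(r)u(r)\to\kappa>0$: this is not an auxiliary lemma but is equivalent to the proposition itself. Writing $\epsilon=R-r$ and $I(\epsilon)=\int_r^R\eta(s)\,ds$, you have $u\sim I/R$, so $\eta u\to\kappa$ says $\frac{d}{d\epsilon}\bigl(I^2/2\bigr)=I\eta\to\kappa R$, hence $I\sim\sqrt{2\kappa R\epsilon}$ and $\eta\sim\kappa R/I\sim\sqrt{\kappa R/2}\,/\sqrt{\epsilon}$. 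Thus all of the analytic content of \eqref{eq:eta} is packed into the unproven convergence $\eta u\to\kappa$, and the mechanism you sketch for it cannot work as stated: both $u(r)$ and $1-\lambda_r$ tend to $0$, so what is needed is their exact rate of vanishing, and continuity of $r\mapsto\mu_r$ plus the ergodic theorem (Corollary~\ref{corollary:ergodicCorollary}) carries no rate information. In particular $u(r)$ is not the integral of a fixed H\"{o}lder observable against $\mu_r$, and the $r$-derivative of $\text{Pressure}(2\varphi_r)$ involves $\partial_r\varphi_r$, which itself blows up like $\eta(r)$ as $r\to R-$; quantifying that blow-up is precisely the missing step.

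The paper closes the loop with a third-moment computation that has no counterpart in your proposal. Differentiating $\eta$ via \eqref{eq:GPrime} gives \eqref{eq:derivEta}, and the double sum $\sum_{x}\sum_{y}G_r(1,x)G_r(1,y)G_r(y,x)$ is evaluated asymptotically as $C\eta(r)^3$: the upper bound is \eqref{eq:secondIneq} (Ancona's inequalities plus projection of $y$ onto the geodesic from $1$ to $x$ yield the bound $CG_r(1,x)^2(|x|+1)\eta(r)$, and then $\sum_m m\sum_{x\in S_m}G_r(1,x)^2\asymp (1-\exp\{\text{Pressure}(2\varphi_r)\})^{-2}\asymp\eta(r)^2$), while the matching asymptotic equality for $\lambda_{r,m}$-most $x$ is Proposition~\ref{proposition:ergodic2}, proved via equidistribution of $\lambda_{r,m}$ toward the Gibbs states. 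This produces the closed asymptotic ODE $\eta'\sim C\eta^3$ of Corollary~\ref{corollary:etaDE}, which integrates to \eqref{eq:eta}. As written, your argument assumes what is to be proved; to make your $\eta u$ formulation work you would still need the exact asymptotics of this triple sum, at which point you have reproduced the paper's proof.
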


This will follow from Corollary~\ref{corollary:etaDE} below.  The key
to the argument is that the
growth of $\eta (r)$ as $r \rightarrow R-$ is related by
Proposition~\ref{proposition:absolutelyCont} to that of
Pressure$(2\varphi_{r})$: in particular, 
Proposition~\ref{proposition:pressureEqualsZero} implies that $\eta
(r) \rightarrow \infty$ as $r \rightarrow R-$, so the dominant
contribution to the sum \eqref{eq:eta} comes from vertices $x$ at
large distances from the root vertex $1$. Consequently, by equation
\eqref{eq:sphereAsymptotics}, 
\begin{equation}\label{eq:eta-Pressure}
	\eta (r)\sim C (R,2) / (1-\exp \{\text{\rm Pressure}
	(2\varphi_{r}) \})
	\quad \text{as} \;\; r \rightarrow R-.
\end{equation}

To analyze the behavior of $\eta (r)$ (or equivalently that of
Pressure$(2\varphi_{r})$) as $r \rightarrow R-$, we
use the differential equations \eqref{eq:GPrime} to express the
derivative of $\eta (r)$ as
\begin{equation}\label{eq:derivEta}
	\frac{d\eta}{dr}=\sum_{x\in \Gamma} \left\{ \sum_{y\in \Gamma}
				    2r^{-1} G_{r} (1,x) G_{r}
				    (1,y)G_{r} (y,x)\right\}
				    -2r^{-1}G_{r} (1,x)^{2}.
\end{equation}
(Note: The implicit interchange of $d/dr$ with an infinite sum is
justified here because the Green's functions $G_{r} (u,v)$ are defined
by power series with nonnegative coefficients.) For $r \approx R$, the
sum $\sum_{x\in \Gamma }$ is once again dominated by those vertices
$x$ at large distances from the root $1$. Because the second term
$2r^{-1}G_{r} (1,x)^{2}$ in \eqref{eq:derivEta} remains bounded as $r
\rightarrow R-$, it is asymptotically negligible compared to the first
term $\sum_{x}\sum_{y}$ and so we can ignore it in proving \eqref{eq:eta}.

The strategy for dealing with the inner sum $\sum_{y\in \Gamma}$ in
\eqref{eq:derivEta} will be similar to that used in the proof of
Lemma~\ref{lemma:snapback} above. For each $x$, let $L=L (1,x)$ be the unique
geodesic segment from the root to $x$ that corresponds to a path in
the Cannon automaton, and partition the sum $\sum_{y\in \Gamma }$
according to the nearest vertex $z\in L$:
\begin{equation}\label{eq:sumPartition}
	\sum_{y\in \Gamma}=\sum_{z\in L}\sum_{y\in \Gamma (z)}
\end{equation}
where $\Gamma (z)$ is the set of all vertices $y\in \Gamma$ such that
$z$ is a closest vertex to $y$ in the geodesic segment $L$. (If for
some $y$ there are several vertices $z_{1},z_{2},\dotsc$ on $L$ all
closest to $y$, put $y\in \Gamma (z_{i})$ only for the vertex $z_{i}$
nearest to the root $1$.) By the log-subadditivity of the Green's
function and Theorem~\ref{theorem:1} (the Ancona
inequalities) there exists a constant $C<\infty$ independent of $1\leq
r\leq R$ such that for all choices of $x\in \Gamma$, $z\in L (1,x)$,
and $y\in \Gamma (z)$,
\begin{align*}
	G_{r} (1,x) G_{r}(1,y)G_{r} (y,x)& \leq CG_{r} (1,z)^{2}G_{r}
	(z,x)^{2} G_{r} (z,y)^{2} \\
\notag 	&\leq CG_{r} (1,x)^{2}G_{r} (z,y)^{2};
\end{align*}
consequently, for each $x\in \Gamma$,
\begin{align}\label{eq:secondIneq}
		\sum_{y \in \Gamma }G_{r} (1,x) G_{r}(1,y)G_{r} (y,x)
 		&\leq \sum_{z\in L (1,x)}\sum_{y\in \Gamma (z)}
			     CG_{r} (1,x)^{2}G_{r} (z,y)^{2}\\
\notag 		&\leq \sum_{z\in L (1,x)}\sum_{y\in \Gamma}
			     CG_{r} (1,x)^{2}G_{r} (z,y)^{2}\\
\notag 		&=  CG_{r} (1,x)^{2} (|x|+1)\eta (r).
\end{align}
Proposition~\ref{proposition:ergodic2} below asserts that for large
$m$ and $r\approx R$ this inequality is in fact an approximate equality for
``most'' $x\in S_{m}$.  This  implies that for large $m$ the
contribution to the double sum in \eqref{eq:derivEta} with $|x|=m$ is
dominated by those $x$ that are ``generic'' for the probability
measure $\lambda_{r,m}$ on $S_{m}$ with density proportional to $G_{r}
(1,x)^{2}$ (cf. sec.~\ref{ssec:greenSpheres}).

\begin{proposition}\label{proposition:ergodic2}
For each $r\leq R$ and each $m=1,2,\dotsc$ let $\lambda_{r,m}$ be the
probability measure on the sphere $S_{m}$ with density proportional to
$G_{r} (1,x)^{2}$. There is a continuous, positive function $\xi (r)$
of $r\in [1,R]$ such that  each $\varepsilon >0$, and uniformly for $1\leq r\leq R$,
\begin{equation}\label{eq:ergodic}
	\lim_{m \rightarrow \infty}
	\lambda_{r,m}\left\{
	x\in S_{m}\,:\, \Bigr \lvert
	     \frac{1}{m}\sum_{y\in \Gamma} G_{r} (1,y)G_{r} (y,x) /G_{r} (1,x)
	     -\xi (r)\eta (r)\Bigr \rvert >\varepsilon 
	 \right\} =0.
\end{equation}
\end{proposition}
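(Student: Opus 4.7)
The plan is to reduce the sum on the left of~\eqref{eq:ergodic} to an ergodic average of a H\"older continuous function along the geodesic $L=L(1,x)$, and then invoke the uniform ergodic theorem on spheres, Corollary~\ref{corollary:ergodicCorollary}.

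The first step is a localization. Fix $x\in S_m$, write $L=L(1,x)$, and partition $\Gamma=\bigcup_{z\in L}\Gamma(z)$ as in~\eqref{eq:sumPartition}. For $z$ in the interior of $L$ and $y\in\Gamma(z)$, the Ancona inequalities from Theorem~\ref{theorem:1}(B), applied to the geodesic triangles at $z$, give
\[
\frac{G_r(1,y)\,G_r(y,x)}{G_r(1,x)}\asymp G_r(z,y)^2\cdot\frac{G_r(1,z)\,G_r(z,x)}{G_r(1,x)}
\]
uniformly in $r\le R$. The triangle-limit Corollary~\ref{corollary:bi} together with the H\"older continuity of the Martin kernel (Theorem~\ref{theorem:holderMartinKernel}) then upgrades ``$\asymp$'' to equality up to a H\"older continuous multiplicative factor depending on the pair $(e^+(z),e^-(z))$ of forward and backward words at $z$ in the Cannon automaton, with multiplicative error $1+O(\varrho^{d(z,1)\wedge d(z,x)})$. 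Summing over $y\in\Gamma(z)$ and extracting $\eta(r)$ — using that $\eta(r)^{-1}\sum_{y\in\Gamma(z)}G_r(z,y)^2$ converges, via Proposition~\ref{proposition:absolutelyCont}, to an $r$-continuous H\"older function of $(e^+,e^-)$ — yields
\[
\sum_{y\in\Gamma(z)}\frac{G_r(1,y)\,G_r(y,x)}{G_r(1,x)}=\eta(r)\,\widetilde\Phi_r\bigl(e^+(z),e^-(z)\bigr)\bigl(1+O(\varrho^{d(z,1)\wedge d(z,x)})\bigr),
\]
for a positive H\"older continuous function $\widetilde\Phi_r:\tilde\Sigma\to\mathbb R_+$, with $r\mapsto\widetilde\Phi_r$ continuous in the H\"older norm on $[1,R]$.

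Summing over $z\in L$ and dividing by $m$, the left-hand side of~\eqref{eq:ergodic} differs from $\eta(r)\cdot m^{-1}\sum_{z\in L}\widetilde\Phi_r(e^+(z),e^-(z))$ by terms that total $o(\eta(r))$ uniformly in $r\in[1,R]$: the geometric errors sum along $L$ to a bounded quantity, and the endpoint contributions from $z$ close to $1$ or $x$ are controlled using~\eqref{eq:secondIneq}. Corollary~\ref{corollary:ergodicCorollary} applied to the H\"older function $\widetilde\Phi_r$ then gives, uniformly in $r\in[1,R]$,
\[
\frac{1}{m}\sum_{z\in L(1,x)}\widetilde\Phi_r\bigl(e^+(z),e^-(z)\bigr)\longrightarrow\int\widetilde\Phi_r\circ\alpha^{-1}\,d\mu_r=:\xi(r)\quad\text{in $\lambda_{r,m}$-probability.}
\]
Continuity and strict positivity of $\xi$ on $[1,R]$ follow from Proposition~\ref{proposition:uniformErgodicity} together with H\"older-norm continuity of $r\mapsto\widetilde\Phi_r$ and $r\mapsto\varphi_r$.

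The main obstacle is the localization step. The Ancona inequalities in Theorem~\ref{theorem:1} yield only two-sided bounds, whereas the ergodic theorem requires an asymptotic factorisation through an $r$-H\"older continuous function $\widetilde\Phi_r$, uniformly up to $r=R$. The necessary upgrade is supplied by the \emph{exponential} Martin kernel estimates~\eqref{eq:convergenceRate} and~\eqref{eq:bi-ineq}, both uniform in $r\le R$; these allow the ratio $G_r(1,y)G_r(y,x)/G_r(1,x)$ to be expressed, up to exponentially small errors, as an $r$-H\"older function of the local words $e^{\pm}(z)$ and of $y$'s relative position to $z$. Once the localization is accomplished, the remaining bookkeeping — controlling tails in $y$ far from $z$, and in $z$ close to the endpoints of $L$ — is routine, using the exponential decay of $G_R$ from Theorem~\ref{theorem:1}(A) and the Harnack inequality~\eqref{eq:harnack}.
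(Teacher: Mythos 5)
Your argument follows the paper's own proof almost step for step: the sector decomposition \eqref{eq:sumPartition}, Lemma~\ref{lemma:geometry}/Lemma~\ref{lemma:configuration}-type localization, the upgrade from the two-sided Ancona bounds to an asymptotic factorization via Corollary~\ref{corollary:bi} and Theorem~\ref{theorem:holderMartinKernel}, and the final appeal to Corollary~\ref{corollary:ergodicCorollary}. That part is sound and is exactly the intended route.

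There is, however, one concrete gap: the strict positivity of $\xi(R)$, equivalently the claim that $\eta(r)^{-1}\sum_{y\in\Gamma(z)}G_r(z,y)^2$ converges to a \emph{positive} function of the local configuration as $r\to R$. You attribute this to Proposition~\ref{proposition:absolutelyCont}, and the positivity of $\xi$ to Proposition~\ref{proposition:uniformErgodicity} plus H\"{o}lder-norm continuity; neither delivers it. Proposition~\ref{proposition:absolutelyCont} controls sums of $G_r(1,\cdot)^2$ over \emph{full spheres}, not over the sectors $\Gamma(z)$. For fixed $r<R$ the normalized sector mass is indeed an exact, locally constant function of $(e^+(z),e^-(z))$ (by the analogue of Lemma~\ref{lemma:geometry}), but as $r\to R$ both $\sum_{y\in\Gamma(z)}G_r(z,y)^2$ and $\eta(r)$ diverge, and nothing you cite prevents every such ratio from tending to $0$ --- which would give $\xi(R)=0$ and destroy the application in Corollary~\ref{corollary:etaDE}, where the constant $C=2R^{-1}\xi(R)$ must be positive. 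The paper closes this with Lemma~\ref{lemma:density}: using the fork structure of the Cannon automaton, any $K$ consecutive sectors along $L$ jointly capture at least a fixed fraction $C$ of $\eta(r)$, uniformly in $r\in[1,R]$, whence the ergodic average of $\widetilde\Phi_r$ is bounded below by $C/K>0$. You need this lemma (or an equivalent uniform lower bound on blocks of sector masses) to complete the proof.
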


This will be deduced from Corollaries
\ref{corollary:ergodic}--\ref{corollary:ergodicCorollary} ---
see section~\ref{ssec:ergodic} below.  Given
Proposition~\ref{proposition:ergodic2},
Proposition~\ref{proposition:eta} and
Theorem~\ref{theorem:criticalExponent} follow easily, as we now show.

 \begin{corollary}\label{corollary:etaDE}
There exists a positive, finite constant $C$ such that as $r
\rightarrow R-$,
\begin{equation}\label{eq:etaDE}
	\frac{d\eta}{dr}\sim C \eta (r)^{3}
	\quad \text{as} \;r \rightarrow R-.
\end{equation}
Consequently,
\begin{equation}\label{eq:etaAsymptotics}
	\eta (r)^{-2}\sim C (R-r)/2.
\end{equation}
\end{corollary}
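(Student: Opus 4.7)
The plan is to extract \eqref{eq:etaDE} from the differential equation \eqref{eq:derivEta} by combining Proposition~\ref{proposition:ergodic2} with the pressure asymptotic of Proposition~\ref{proposition:absolutelyCont}, and then to integrate the resulting separable ODE to obtain \eqref{eq:etaAsymptotics}. Write $\Sigma_{m}(r):=\sum_{x\in S_{m}}G_{r}(1,x)^{2}$ and $\Phi_{r}(x):=G_{r}(1,x)^{-1}\sum_{y\in \Gamma}G_{r}(1,y)G_{r}(y,x)$. Since $\lambda_{r,m}$ has density proportional to $G_{r}(1,x)^{2}$, \eqref{eq:derivEta} rewrites as
\[
	\frac{d\eta}{dr}=2r^{-1}\sum_{m=0}^{\infty} \Sigma_{m}(r)\int \Phi_{r}\,d\lambda_{r,m} -2r^{-1}\eta(r).
\]
Since $\eta(r)\to\infty$ as $r\to R-$ by Proposition~\ref{proposition:pressureEqualsZero} together with \eqref{eq:sphereAsymptotics}, the correction term is $O(\eta(r))=o(\eta(r)^{3})$ and is dropped.

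Next I would evaluate $\int \Phi_{r}\,d\lambda_{r,m}$ using \eqref{eq:secondIneq} for the pointwise majorant $\Phi_{r}(x)\leq C(|x|+1)\eta(r)$ on $S_{m}$ together with Proposition~\ref{proposition:ergodic2}, which asserts $\Phi_{r}/m\to \xi(r)\eta(r)$ in $\lambda_{r,m}$-probability uniformly in $r\in[1,R]$. A standard truncation of the atypical set, using the pointwise bound to control its contribution, yields
\[
	\int \Phi_{r}\,d\lambda_{r,m}\sim m\,\xi(r)\eta(r) \quad (m\to\infty,\text{ uniformly in } r\in[1,R]).
\]

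The third step is to sum over $m$. Set $p(r):=-\text{Pressure}(2\varphi_{r})$, which by Lemma~\ref{corollary:belowR} and Proposition~\ref{proposition:pressureEqualsZero} satisfies $p(r)>0$ for $r<R$ and $p(R)=0$. Proposition~\ref{proposition:absolutelyCont} gives $\Sigma_{m}(r)\sim C_{*}(r)e^{-mp(r)}$ uniformly in $r\in[1,R]$ (cf. Note~\ref{note:theta}), so elementary geometric-sum estimates yield
\[
	\eta(r)\sim \frac{C_{*}(r)}{p(r)}\quad \text{and}\quad \sum_{m}m\,\Sigma_{m}(r)\sim \frac{C_{*}(r)}{p(r)^{2}}\sim \frac{\eta(r)^{2}}{C_{*}(R)}
\]
as $r\to R-$. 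Combining with the previous display,
\[
	\frac{d\eta}{dr}\sim \frac{2\xi(R)}{R\,C_{*}(R)}\,\eta(r)^{3},
\]
which is \eqref{eq:etaDE}. To deduce \eqref{eq:etaAsymptotics}, observe that $\tfrac{d}{dr}\eta(r)^{-2}=-2\eta(r)^{-3}\tfrac{d\eta}{dr}\sim -2C$, and integrate from $r$ to $R$ with boundary value $\eta(R)^{-2}=0$.

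The principal obstacle is uniformity: the mass in $\sum_{m}m\,\Sigma_{m}(r)$ concentrates on $m\asymp 1/p(r)\to\infty$ as $r\to R-$, so the $o(1)$-errors in Proposition~\ref{proposition:ergodic2} and in \eqref{eq:sphereAsymptotics} must be uniform in $(m,r)$ well enough that the geometric-sum estimate above survives inserting the asymptotics under the sum. The uniform large-deviation bound Proposition~\ref{proposition:uniformErgodicity} and the uniform convergence in Corollary~\ref{corollary:ergodic} are designed precisely to accommodate this; checking that their rates compose cleanly with the geometric tail $e^{-mp(r)}$ is where the real work lies.
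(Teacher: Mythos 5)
Your argument is correct and follows essentially the same route as the paper: drop the $O(\eta)$ correction term in \eqref{eq:derivEta}, use Proposition~\ref{proposition:ergodic2} together with the uniform majorant \eqref{eq:secondIneq} to replace the inner sum by $m\,\xi(r)\eta(r)$ on each sphere, evaluate $\sum_m m\,\Sigma_m(r)$ via the pressure asymptotics of Proposition~\ref{proposition:absolutelyCont} and \eqref{eq:eta-Pressure} to get $\asymp\eta(r)^2$, and integrate the resulting ODE with $\eta(R)^{-2}=0$. The uniformity issues you flag at the end are real but are treated at exactly the same (terse) level in the paper's own proof.
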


\begin{proof}
We have already observed that as $r$ near $R$, the dominant
contribution to the sum \eqref{eq:derivEta} comes from vertices $x$
far from the root.  Proposition~\ref{proposition:ergodic2} and the
uniform upper bound \eqref{eq:secondIneq} on ergodic averages imply
that as $r \rightarrow R-$,
\begin{align*}
	\frac{d\eta}{dr}\sim \sum_{x\in \Gamma} \sum_{y\in \Gamma}
				    2r^{-1} G_{r} (1,x) G_{r}
				    (1,y)G_{r} (y,x)
	&\sim 2R^{-1}\xi (R)\eta (r)\sum_{m=1}^{\infty}m\sum_{x\in
	S_{m}} G_{r} (1,x)^{2}\\
	&\sim C' \eta (r) / (1- \exp \{\text{\rm Pressure} (2\varphi_{r}) \})^{2}\\
	& \sim C \eta (r)^{3}
\end{align*}
for suitable positive constants $C,C'$. This
proves \eqref{eq:etaDE}. The relation \eqref{eq:etaAsymptotics}
follows directly from \eqref{eq:etaDE}.
\end{proof}

 \subsection{Proof of
Proposition~\ref{proposition:ergodic2}}\label{ssec:ergodic} This will
be accomplished by showing that the average in \eqref{eq:ergodic} can
be expressed approximately as an ergodic average of the form
\eqref{eq:erg2}, to which the result of
Corollary~\ref{corollary:ergodicCorollary} applies.  The
starting point is the decomposition \eqref{eq:sumPartition}. The inner
sum in \eqref{eq:sumPartition} is over the set $\Gamma (z)$ of
vertices $y$ for which $z$ is the nearest point on the geodesic
segment $L$. The following geometrical lemma implies that the set of
relative positions $z^{-1}y$, where $y\in \Gamma (z)$, depends only on
configuration of the geodesic segment $L$ in a bounded neighborhood of
$z$.

\begin{lemma}\label{lemma:geometry} 
If $L$ and $L'$ are geodesic segments both passing through the vertex
$z$, then denote by $\Gamma (z)$ and $\Gamma ' (z)$, respectively, the
sets of vertices $y$ such that $z$ is the nearest\footnote{Asssume
that the two geodesic segments $L,L'$ have the same orientation
relative to their common segment through $z$, so that in cases of
multiplicity ties are resolved the same way.}  vertex on $L$
(respectively, on $L'$) to $y$.  There exists $K<\infty$, independent
of $z$, $L$, and $L'$, so that if $L$ and $L'$ coincide in the ball of radius $K$
centered at $z$, then
\begin{equation}\label{eq:geometry}
	\Gamma (z)=\Gamma' (z').
\end{equation} 
\end{lemma}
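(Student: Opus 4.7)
The plan is to exploit the $\delta$-hyperbolicity of the Cayley graph $G^{\Gamma}$ --- an immediate consequence of its quasi-isometric embedding in the hyperbolic plane --- to show that nearest-point projection onto a geodesic segment is determined by the segment only through a bounded neighborhood of the projection point. The constant $K$ will be taken to be a sufficiently large integer multiple of the hyperbolicity constant $\delta$ of $G^{\Gamma}$, chosen to absorb the additive $O(\delta)$-errors inherent in the standard thin-triangle and projection estimates in $\delta$-hyperbolic geometry.

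I would argue by contradiction. Suppose $L$ and $L'$ coincide on the ball $B_{K}(z)$ of radius $K$ about $z$, that $y \in \Gamma(z)$, but that the nearest vertex $z^{*}$ on $L'$ to $y$ (with the tie-breaking convention) is not $z$. If $d(z,z^{*}) \leq K$ then $z^{*} \in L' \cap B_{K}(z) = L \cap B_{K}(z)$; by the common-orientation hypothesis, the tie-breaking rule for $L$ then makes the same assignment as for $L'$, contradicting $y \in \Gamma(z)$. Hence $d(z,z^{*}) > K$. Assume without loss of generality that $z^{*}$ lies on one side of $z$ along $L'$, and let $z_{0}$ be the vertex on this side of $L'$ at distance exactly $K$ from $z$. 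Since $z_{0} \in L' \cap B_{K}(z) \subset L$, the assumption $y \in \Gamma(z)$ gives $d(y,z_{0}) \geq d(y,z)$.

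The geometric heart of the argument uses the standard quasi-projection property of nearest-point maps in $\delta$-hyperbolic geometry. Let $q$ be the closest point on the geodesic subsegment $[z,z^{*}] \subset L'$ to $y$, and set $m := d(y,q)$. A routine thin-triangle estimate (cf.~\cite{ghys-deLaHarpe}, ch.~2) yields, for every $p \in [z,z^{*}]$, the two-sided bound
\[
m + d(p,q) - C\delta \leq d(y,p) \leq m + d(p,q),
\]
with $C$ a universal constant. Applying this with $p = z$ and $p = z_{0}$ gives $d(z,q) = d(y,z) - m + O(\delta)$ and $d(z_{0},q) = d(y,z_{0}) - m + O(\delta)$. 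If $q$ lies on the subsegment $[z_{0},z^{*}]$ (the part of $[z,z^{*}]$ outside $B_{K}(z)$), then $d(z,q) = K + d(z_{0},q)$, and subtracting the two identities above yields
\[
K = d(y,z) - d(y,z_{0}) + O(\delta) \leq O(\delta),
\]
an impossibility once $K$ exceeds the implied constant. If instead $q \in [z,z_{0}] \subset L$, then $y \in \Gamma(z)$ forces $m \geq d(y,z)$, while $m \leq d(y,z^{*}) \leq d(y,z)$, so $m = d(y,z) = d(y,z^{*})$; the orientation hypothesis again forces tie-breaking on $L$ and $L'$ to agree on this tied triple, contradicting the assumption that $z^{*}$ is preferred over $z$.

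The main obstacle is the careful tracking of the hyperbolicity constants and of the tie-breaking convention. The $O(\delta)$-slack in the projection estimate is absorbed by choosing $K$ to be any integer exceeding a universal multiple of $\delta$ (depending only on $G^{\Gamma}$, not on $z$, $L$, or $L'$), and the common-orientation hypothesis in the footnote ensures that the tie-breaking rules for $\Gamma(z)$ and $\Gamma'(z)$ agree along the common segment through $z$. These quantitative estimates are routine consequences of the thin-triangle characterization of $\delta$-hyperbolic metric spaces, so no geometric input beyond what has already been invoked elsewhere in the paper (in particular, in the proof of Proposition~\ref{proposition:ancona}) is required.
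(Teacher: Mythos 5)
Your argument is the ``routine consequence of the thin triangle property'' that the paper's proof consists of in its entirety, so in approach you and the paper coincide; you have simply supplied the details the paper omits. The case analysis and the use of the quasi-projection estimate are correct, with one exception: the justification of your final sub-case ($q\in[z,z_{0}]$) does not work as stated. There you conclude $m=d(y,z)=d(y,z^{*})$ and then appeal to the common-orientation hypothesis to claim the tie between $z$ and $z^{*}$ must be broken the same way on $L$ and $L'$. But $z^{*}$ lies outside $B_{K}(z)$, hence need not lie on $L$ at all, so the tied pair $\{z,z^{*}\}$ is not contained in the common segment and the orientation convention says nothing about it; if $z^{*}$ happens to precede $z$ along $L'$, the tie-break on $L'$ legitimately selects $z^{*}$ and no contradiction with the choice made on $L$ arises. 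The sub-case is nonetheless vacuous for large $K$, and your own projection estimate closes it: from $m=d(y,z)$ and the lower bound $d(y,z)\geq m+d(z,q)-C\delta$ you get $d(z,q)\leq C\delta$, whence $d(z^{*},q)\geq d(z,z^{*})-C\delta>K-C\delta$ and therefore $d(y,z^{*})\geq m+K-2C\delta>m$, contradicting $d(y,z^{*})=m$ once $K>2C\delta$. (Alternatively, since $z^{*}$ already minimizes $d(y,\cdot)$ over all of $L'$, you may simply take $q=z^{*}$, which forces $q\in[z_{0},z^{*}]$ and eliminates the second sub-case entirely.) With that repair, and the observation that the reverse inclusion $\Gamma'(z)\subset\Gamma(z)$ follows by exchanging the roles of $L$ and $L'$, the proof is complete.
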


\begin{proof}
This is a routine consequence of the thin triangle property.
\end{proof}

The next issue is the approximation in \eqref{eq:secondIneq}. For
this, the key is Corollary~\ref{corollary:bi} --- in particular,
inequality \eqref{eq:bi-ineq} --- which will ultimately justify
replacing $G_{r} (1,y)$ and $G_{r} (y,x)$ by the products
$G_{r}(1,z)G_{r} (z,y)$ and $G_{r} (y,z)G_{r} (z,x)$, respectively,
times suitable functions of $z$. The thin triangle property is
essential here, as it implies that, for $y\in \Gamma (z)$, any
geodesic segments from $y$ to $x$ or from $y$ to $1$ must pass within
distance $32\Delta$ of the point $z$ (see the proof of
Lemma~\ref{lemma:snapback}). Thus, if $z^{+} (y)$ and $z^{-} (y)$ are
the nearest vertices to $z$ on geodesic segments from $y$ to $x$ and
$y$ to $1$, respectively (with ties resolved by chronological
ordering), then both $z^{+} (y)$ and $z^{-} (y)$ are among the
vertices in the ball of radius $32 \Delta$ centered at $z$.  The
following lemma shows that the assignments $y\mapsto z^{\pm} (y)$ can
be made so as to depend only on the relative position of $y$ to $z$
and the configuration $(e^{+} (z),e^{-} (z))$ of the geodesic $L
(1,x)$ in a bounded neighborhood of $z$. (Recall
[Corollary~\ref{corollary:ergodic}] that $e^{+} (z)$ and $e^{-} (z)$
are the sequences of group generators corresponding to the steps of $L
(1,x)$ from $z$ forward to $x$ and from $z$ back to $1$,
respectively.)

\begin{lemma}\label{lemma:configuration}
Assume that the Cayley graph $G^{\Gamma}$ is \emph{planar}.
There exists $K<\infty$ such that the following is true. The
assignments $y\mapsto z^{\pm} (y)$ for $y\in \Gamma (z)$ on any
geodesic segment $L (1,x)$ can be made in such a way that the relative
positions 
\[
	z^{-1}z^{+} (y) \quad \text{and} \quad z^{-1}z^{-} (y)
\] 
depend only on the relative position $z^{-1}y$ of $y$ in the sector
$\Gamma (z)$ and the configuration $(e^{+} (z),e^{-} (z))$ of the
geodesic $L (1,x)$ restricted to the ball of radius $K$ centered at $z$.
\end{lemma}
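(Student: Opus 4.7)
The plan is to combine the $\Delta$-thinness of geodesic triangles with the planarity of $G^{\Gamma}$ to localize $z^{\pm}(y)$ in a bounded neighborhood of $z$, and then to use the planar orientation to specify a canonical choice depending only on local data. First, I would recall from the proof of Lemma~\ref{lemma:snapback} that by $\Delta$-thinness, every geodesic segment from $y \in \Gamma(z)$ to $x$ (respectively to $1$) must pass within distance $32\Delta$ of $z$; hence any admissible choice of $z^{\pm}(y)$ lies in the ball $B(z, 32\Delta)$, and the relative position $z^{-1}z^{\pm}(y)$ ranges over the finite set $B(1, 32\Delta) \subset \Gamma$.

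Next, I would take $K$ to be a constant somewhat larger than $32\Delta$; for concreteness, $K = 64\Delta$. Fix once and for all a planar embedding of $G^{\Gamma}$ together with an orientation. Because left translations by $\Gamma$ act as automorphisms of the Cayley graph preserving this embedding up to an orientation flip that can be controlled uniformly, the notions of \emph{left} and \emph{right} relative to a directed edge transport consistently under translation. I would then define $z^{+}(y)$ to be the vertex nearest $z$ on the \emph{leftmost} geodesic segment from $y$ to $x$ that passes through $B(z, 32\Delta)$, where \emph{leftmost} is interpreted with respect to the direction of $L$ near $z$; residual ties are broken by a fixed lexicographic order on the generator sequences of the geodesic within $B(z, K)$. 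The definition of $z^{-}(y)$ is analogous.

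The central claim to verify is that $z^{-1}z^{+}(y)$ is a function solely of $z^{-1}y$ and of $(e^{+}(z), e^{-}(z))|_{B(z,K)}$. Translating by $z^{-1}$ reduces to the case $z = 1$. Any admissible geodesic from $y$ to $x$ enters $B(1, 32\Delta)$ through a vertex on the boundary annulus, traverses the ball, and exits on the side of $L$ containing $x$. By planarity, the combinatorial possibilities for this ``germ'' inside $B(1, K)$ are determined entirely by the local configuration of $L$ there and by the position of $y$ relative to $1$ (which controls the entry point up to a bounded ambiguity, resolved by the thin-triangle property as in Lemma~\ref{lemma:geometry}). The exit side is encoded in $e^{+}(1)$, and the local picture is therefore complete.

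The main obstacle will be showing that the leftmost local germ inside $B(1, K)$ really coincides with the germ of the leftmost \emph{global} geodesic from $y$ to $x$. This is where planarity is essential: in a planar graph, ``leftmost'' is a total order on geodesic germs through a fixed disc, and any leftmost local germ admissible with respect to $e^{+}(1)$ can be prolonged to a full geodesic reaching $x$ because $x$ lies on the correct side of $L$ far from $1$ by hypothesis on $L = L(1,x)$. Hence the local procedure does compute $z^{+}(y)$, and the symmetric argument with $x$ replaced by $1$ handles $z^{-}(y)$. The same $K$ works simultaneously for both, completing the proof.
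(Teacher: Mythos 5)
Your setup is sound: thin triangles localize $z^{\pm}(y)$ in a ball of bounded radius about $z$, and planarity is indeed the tool that makes a canonical, locally determined choice possible. But there is a genuine gap at exactly the point you flag as ``the main obstacle.'' You assert that the leftmost local germ admissible with respect to $e^{+}(1)$ ``can be prolonged to a full geodesic reaching $x$ because $x$ lies on the correct side of $L$ far from $1$.'' That is not an argument: in a general graph, planar or not, a path that is geodesic inside a ball need not extend to a global geodesic, and which local germs through $B(z,K)$ are germs of genuine geodesics from $y$ to $x$ could a priori depend on where $x$ sits beyond the ball, not merely on the restriction of $e^{+}(z)$ to $B(z,K)$. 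Without ruling this out, your leftmost local germ need not compute the germ of any actual geodesic from $y$ to $x$, so the conclusion that $z^{-1}z^{+}(y)$ is a function of $z^{-1}y$ and the local configuration is not established.

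The missing ingredient is an endpoint-independence statement, and that is what the paper's proof supplies. Reformulate the claim as: for $y\in\Gamma(z)$, the nearest point to $z$ on a geodesic from $y$ to a point $x_{n}$ of the geodesic $\gamma$ lying outside the ball of radius $K+1$ about $z$ does not depend on $x_{n}$. If it did, the geodesic segments from $y$ to two such points $x_{n}$ and $x_{m}$ would, by planarity, be forced to cross again after their respective nearest approaches to $z$ (both start at $y$ and must emerge on the same side of $L$ beyond the ball), and a second crossing would let one shorten at least one of them, contradicting geodesity. This crossing argument is the substance of the lemma; your write-up replaces it with the unproved prolongation claim. If you supply it, the remainder of your construction (the leftmost and lexicographic tie-breaking used to fix a single-valued assignment) does go through and is compatible with the paper's statement.
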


\begin{proof}
The assertion is equivalent to this: For any geodesic segment $\gamma$
through $z$ and any vertex $y\in \Gamma (z)$, the nearest point $z^{+}_{n} (y)$
to $z$ on the geodesic segment from $y$ to a point $x_{n}$ on $\gamma$
outside the ball of radius $K+1$ centered at $z$ does not depend on
$x_{n}$.  If this statement were not true, then for some $x_{n}$ and
$x_{m}$, the geodesic segments from $y$ to $x_{n}$ and from $y$ to
$x_{m}$ would have to cross after their nearest approaches to $z$, by
planarity of $G^{\Gamma}$. This would contradict the geodesic property
for at least one of them.
\end{proof}

Now consider the terms $G_{r} (1,y)G_{r} (y,x)G_{r} (1,x)$ in the sum
\eqref{eq:sumPartition}. By the Ancona inequalities, the ratios 
\[
	\frac{G_{r} (1,z^{-} (y))G_{r} (z^{-} (y),y)}{G_{r} (1,y)}, \quad 
	\frac{G_{r} (y,z^{+} (y))G_{r} (z^{+},x)}{G_{r} (y,x)}, \quad
	\text{and} \quad 
	\frac{G_{r} (1,z)G_{r} (z,x)}{G_{r} (1,x)}
\]
are bounded away from $0$ and $\infty$, and by
Corollary~\ref{corollary:bi} and
Lemmas~\ref{lemma:geometry}--\ref{lemma:configuration} they depend
continuously on the local configuration $e^{-} (z),e^{+} (z)$ of the
geodesic $L (1,x)$ near $z$. By
Theorem~\ref{theorem:holderMartinKernel}, the ratios
\[
	\frac{G_{r} (1,z^{-} (y))}{G_{r} (1,z)},\;
	\frac{G_{r} (z^{-} (y),y)}{G_{r} (z,y)},\;
	\frac{G_{r} (y,z^{+} (y))}{G_{r} (z,y)},\;
	\text{and}
	\frac{G_{r} (z^{+},x)}{G_{r} (z,x)}
\]
also vary continuously with $e^{-} (z),e^{+} (z)$. Consequently, for a
suitable constant $\xi (r)$, the convergence \eqref{eq:ergodic}
follows from Corollary~\ref{corollary:ergodic}. That $\xi (r)$ varies
continuously with $r$ for $r\leq R$ follows from the continuous
dependence of the Gibbs state $\mu_{r}$ with $r$
(Proposition~\ref{corollary:ergodic}). It remains only to show that
$\xi (R) >0$; this follows from the  next lemma.

\begin{lemma}\label{lemma:density}
There exist  $K<\infty$ and $C>0$ independent of $1\leq r\leq R$ so that the
following is true. For any geodesic segment $L$ of length $\geq K$
corresponding to a path in the  Cannon automaton, and
any $K$ consecutive vertices $z_{1},z_{2},\dotsc , z_{K}$ on $L$,
\begin{equation}\label{eq:density}
	\sum_{j=1}^{K} \sum_{y\in \Gamma (z_{j})} G_{r} (z,y)^{2}
	\geq C \eta (r).
\end{equation}
\end{lemma}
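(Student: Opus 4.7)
\begin{pf*}{Proof plan for Lemma~\ref{lemma:density}}
The first step is to exploit left-translation invariance of the Green's function. Since $G_{r}(z_{j},y) = G_{r}(1,z_{j}^{-1}y)$, each inner sum satisfies
\[
\sum_{y\in \Gamma(z_{j})} G_{r}(z_{j},y)^{2} = \sum_{u\in S_{j}} G_{r}(1,u)^{2}, \qquad S_{j} := z_{j}^{-1}\Gamma(z_{j}).
\]
By Lemma~\ref{lemma:geometry}, each $S_{j}$ is a ``sector at the root'' determined entirely by the configuration of the translated geodesic $z_{j}^{-1}L$ in a ball of fixed radius about $1$. Since the original $L$ corresponds to an automaton path, each $z_{j}^{-1}L$ passes through $1$ with an admissible local configuration, of which there are only finitely many. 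Thus Lemma~\ref{lemma:density} reduces to the following claim: for every admissible local configuration $(e^{+},e^{-})$ of a bi-infinite geodesic through $1$, the corresponding sector $S(e^{+},e^{-})$ at the root satisfies $\sum_{u\in S(e^{+},e^{-})}G_{r}(1,u)^{2}\geq c\,\eta(r)$, uniformly in $r\in[1,R]$ and in the configuration. (The parameter $K$ enters only to absorb the boundary effects discussed below, allowing us to use a union of $K$ adjacent sectors in place of a single sector.)

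The second step is geometric. Under the quasi-isometric embedding of the Cayley graph into $\zz{H}^{2}$, the bi-infinite geodesic through $1$ with configuration $(e^{+},e^{-})$ has two endpoints $\zeta^{+}$, $\zeta^{-}$ on $\partial \Gamma = S^{1}$, and any geodesic ray from $1$ to a boundary point $\zeta$ diverges exponentially from $L$ unless $\zeta \in \{\zeta^{+},\zeta^{-}\}$. Consequently, for some $\delta>0$ independent of $L$, and for every $\zeta \in \partial\Gamma$ whose visual distance to $\{\zeta^{+},\zeta^{-}\}$ exceeds $\delta$, all but finitely many vertices on the ray from $1$ to $\zeta$ project orthogonally to $L$ within a bounded distance of $1$, and hence lie in $\Gamma(z_{j})$ for some $z_{j}$ in the chosen $K$-window about $1$ (provided $K$ exceeds twice the thin-triangle constant and the width of the projection neighborhood). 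Thus $\bigcup_{j=1}^{K}S_{j}$ contains every vertex $u$ with sufficiently large $|u|$ whose boundary limit lies at visual distance $\geq \delta$ from the pair $\{\zeta^{+},\zeta^{-}\}$.

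The third step translates this to a positive lower bound via the thermodynamic formalism of Section~\ref{sec:thermo}. Proposition~\ref{proposition:absolutelyCont} identifies $\lambda^{*}_{r,m}$ with (an absolutely continuous perturbation of) the Gibbs state $\mu_{r}$ on $\Sigma$, and the correspondence between $\Sigma$ and geodesic rays from $1$ carries this to a measure on $\partial\Gamma$ which is H\"{o}lder-continuously parametrized by $r\in [1,R]$. It suffices therefore to show that $\mu_{r}$-mass of the complement of the $\delta$-neighborhood of any two-point set $\{\zeta^{+},\zeta^{-}\}\subset \partial\Gamma$ is bounded below by a positive constant, uniformly over $r\in[1,R]$ and over the pair. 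This is the standard fact that Gibbs states on mixing subshifts have no atoms, with the uniformity following from compactness of $\partial\Gamma$ together with the H\"{o}lder continuity of $r\mapsto \varphi_{r}$ guaranteed by Theorem~\ref{theorem:holderMartinKernel}. Combining with the sphere asymptotics \eqref{eq:sphereAsymptotics} then gives $\sum_{u\in \bigcup_{j}S_{j}}G_{r}(1,u)^{2}\geq c\,\eta(r)$.

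The main obstacle is the uniformity in step three: one must control $\mu_{r}$-measures of complements of pairs of small boundary neighborhoods both as $r$ varies over $[1,R]$ and as $(\zeta^{+},\zeta^{-})$ ranges over all admissible configurations. The perturbation-theoretic continuity of the Gibbs state in the H\"{o}lder topology (as used in Proposition~\ref{proposition:uniformErgodicity}) handles the $r$-uniformity, while the no-atom property together with compactness of the configuration space (only finitely many admissible local configurations, each giving a continuously varying boundary direction) handles the configuration-uniformity.
\end{pf*}
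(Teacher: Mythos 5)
Your route is genuinely different from the paper's. The paper stays entirely inside the symbolic dynamics: since the subshift has positive entropy, any automaton path of length $K$ contains a \emph{fork} in the recurrent part of $\mathcal{A}$; the geodesics that branch off $L$ at that fork all project (by Lemma~\ref{lemma:geometry}) into the $K$-window, and topological mixing shows that the family of branch continuations, weighted by $\exp\{2S_n\varphi_r\}=G_r(\cdot,\cdot)^2$ via \eqref{eq:cocycle}, is comparable to the full family of automaton paths, hence to $\eta(r)$. You instead identify the union of sectors with a ``visual'' set of directions avoiding $\delta$-neighborhoods of the two ends of $L$, and invoke uniform non-atomicity of the measures $\lambda^{*}_r\ll\mu_r$. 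What your approach buys is a cleaner geometric picture and an explicit role for the boundary measure; what the paper's buys is that it never has to convert between vertices and boundary points, which is where your write-up is thinnest. Both arguments are sketches at a comparable level of detail.

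Two points need repair. First, your opening reduction to a \emph{single} sector $S(e^{+},e^{-})$ is not correct and the parenthetical about ``boundary effects'' is doing all the work: a single slab $\Gamma(z)$ in the middle of a long geodesic captures only those $y$ whose nearest-point projection is exactly $z$, and rays $[1,\zeta)$ with $(\zeta\mid\zeta^{\pm})_{1}$ of moderate size spend their initial segments travelling along $L$ and hence populate neighboring sectors; the $K$-window is essential, not cosmetic. Relatedly, the sets $S_{j}=z_{j}^{-1}\Gamma(z_{j})$ are sectors for $K$ \emph{different} translated geodesics, so ``$\bigcup_{j}S_{j}$ contains every vertex whose boundary limit is far from $\zeta^{\pm}$'' does not parse as written; one should translate once (say by the inverse of the middle $z_{j}$), use the Harnack inequalities \eqref{eq:harnack} to replace the base points $z_{j}$ by a common one at cost $C^{K}$, and work with $\bigcup_{j}\Gamma(z_{j})$ directly. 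Second, and more substantively, the passage from boundary measure to the sums is a gap: a vertex $u$ has no canonical boundary limit, and $\lambda^{*}_{r,m}$ lives on finite words. The correct statement is that the complement of $\bigcup_{j}\Gamma(z_{j})$ in $S_{m}$ consists of words \emph{all} of whose infinite automaton extensions converge into the two $\delta$-balls, so by the consistency relation \eqref{eq:cylinderProbs} its $\lambda^{*}_{r,m}$-mass is at most $\lambda^{*}_{r}(V_{\delta})$ with $V_{\delta}\subset\Sigma$ the preimage of those balls; to make this small one needs a uniform \emph{upper} bound on $d\lambda^{*}_{r}/d\mu_{r}$ (available from \eqref{eq:measureRatio}, but absolute continuity alone does not give it), plus uniform non-atomicity of the pushforwards of $\mu_{r}$ over $(r,\zeta^{+},\zeta^{-})$, which requires that point-preimages in $\Sigma$ are uniformly null (this follows from the uniform exponential decay \eqref{eq:expDecay2} through the Gibbs property \eqref{eq:gibbs}) together with a compactness argument. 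Finally, note that $L$ in the lemma is a finite segment, so the two ``endpoints'' $\zeta^{\pm}$ must be replaced by shadows of the actual endpoints of $L$ when the window sits near an end; none of this is fatal, but it is where the actual content of the lemma lives.
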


\begin{proof}
This is in essence a consequence of hyperbolicity, but is easiest to
prove using symbolic dynamics.  Recall that the sphere $S_{m}$ of
radius $m$ in $\Gamma$ has the description \eqref{eq:sphereDecomp} by
words of length (approximately) $m$ in the Cannon automaton
$\mathcal{A}$.  Since the shift $(\Sigma ,\sigma)$ is topologically
ergodic and has positive topological entropy, there exists $K$ so
large that any path of length $\geq K$ has a \emph{fork} in the set of
recurrent vertices of $\mathcal{A}$, that is, a point where the path
could be continued in an alternative fashion. Let $\gamma$ be the path
in the automaton corresponding to $L$, and let $\gamma '$ be a path
(possibly much longer than $\gamma$) that agrees with $\gamma$ up to a
fork, where it then deviates from $\gamma$. If $K$ is sufficiently
large, then the geodesic $L'$ corresponding to $\gamma '$ will be such
that for every vertex $y\in L'$ the nearest vertex to $y$ in $L$ will
be one of the $K$ vertices $z_{1},\dotsc ,z_{K}$, by
Lemma~\ref{lemma:geometry}. Denote by $\beta'$ the segment  of $\gamma'$
following the fork from $\gamma$. Because the shift $(\Sigma ,\sigma)$
is topologically mixing, the set
of possible continuations $\beta '$ of length $m$ nearly coincides
with the set of paths $\beta'' $ such that for some short path $\alpha$
in $\mathcal{A}$ starting at $s_{*}$ the concatenation $\alpha \beta''$
is a path in $\mathcal{A}$. Thus, the sum in \eqref{eq:density}, which
(roughly) corresponds to the sum over all $\beta '$, is comparable to
the sum over all paths $\alpha \beta ''$ in $\mathcal{A}$.
\end{proof}

\bibliographystyle{plain}
\bibliography{mainbib}

%
%

\end{document}